\newtheorem{theorem}[equation]{Theorem}
\newtheorem{lemma}[equation]{Lemma}
\newtheorem{corollary}[equation]{Corollary}
\newtheorem{proposition}[equation]{Proposition}
\numberwithin{equation}{section}
\begin{document}

\title[codimension-one $A$-hypergeometric systems]{On solutions of codimension-one $A$-hypergeometric systems}
\author{Alan Adolphson}
\address{Department of Mathematics\\
Oklahoma State University\\
Stillwater, Oklahoma 74078}
\email{alan.adolphson@okstate.edu}
\author{Steven Sperber}
\address{School of Mathematics\\
University of Minnesota\\
Minneapolis, Minnesota 55455}
\email{sperber@math.umn.edu}
\date{\today}
\keywords{}
\subjclass{}
\begin{abstract}
By a codimension-one system we mean a system whose lattice of relations has rank one.  We consider codimension-one $A$-hypergeometric systems and  explicitly construct some of the logarithmic series solutions at the origin.  When the parameter vector $\beta$ is nonresonant, we obtain a full set of logarithmic series solutions at the origin by this procedure.  We also determine when a codimension-one system with nonresonant parameter can have maximal unipotent monodromy at the origin.
\end{abstract}
\maketitle

\section{Introduction}

Let $A=\{{\bf a}_1,\dots,{\bf a}_n\}\subseteq{\mathbb Z}^d$ and let $V\subseteq{\mathbb R}^d$ be the subspace spanned by $A$.  We assume that $\dim  V = n-1$ and that each subset of $A$ of cardinality $n-1$ is linearly independent.  The lattice of relations on $A$ then has rank one and is generated by a single relation
\begin{equation}
\sum_{i=1}^k \ell_i{\bf a}_i - \sum_{j=k+1}^n \ell_j{\bf a}_j = {\bf 0},
\end{equation}
where the coefficients $\ell_i$ and $\ell_j$ are relatively prime positive integers.  
Put $x_0 = \prod_{i=1}^k x_i^{\ell_i}/\prod_{j=k+1}^n x_j^{\ell_j}$.  
Our focus will be on finding \emph{logarithmic series solutions\/} of the $A$-hypergeometric system at 
$x_0=0$.  (The definition of the $A$-hyper\-geometric system is recalled in Section~5.)  By a logarithmic series solution at $x_0=0$, we mean a solution in a Nilsson ring ${\mathbb C}[[x_0]][x_0^{-1},\log x_0,x^v]$, where $v\in{\mathbb C}^n$ and $x^v = \prod_{\mu=1}^n x_\mu^{v_\mu}$.  

The associated toric ideal $I_A$ is the principal ideal in the Weyl algebra generated by $\prod_{i=1}^k \partial_i^{\ell_i}-\prod_{j=k+1}^n \partial_j^{\ell_j}$.  To find solutions at $x_0=0$, we can take any weight vector $w$ for which the corresponding initial ideal ${\rm in}_w(I_A)$ is the principal ideal generated by $\prod_{i=1}^k \partial_i^{\ell_i}$.  For example, we can take $w=(1,0,\dots,0)$.  (For any unexplained teminology or notation, we refer the reader to Saito, Sturmfels, and Takayama \cite[Chapter~3]{SST}.)  

It is easy to calculate the fake exponents.  Let $\beta\in V_{\mathbb C}$, where $V_{\mathbb C}\subseteq{\mathbb C}^d$ is the ${\mathbb C}$-span of the set $A$.  Let $i\in\{1,\dots,k\}$, $b\in\{0,1,\dots,\ell_i-1\}$, and define $v^{(i,b)}=(v^{(i,b)}_1,\dots,v^{(i,b)}_n)\in{\mathbb C}^n$ to be the unique vector satisfying the conditions $v^{(i,b)}_i=b$ and $\sum_{\mu=1}^n v^{(i,b)}_\mu{\bf a}_\mu=\beta$.  (There is a unique such vector because the set $A\setminus\{{\bf a}_i\}$ is a basis of $V_{\mathbb C}$.)  The set
\[ E_\beta = \{ v^{(i,b)}\mid i\in\{1,\dots,k\},\ b\in\{0,1,\dots,\ell_i-1\}\}, \]
of cardinality $\leq \sum_{i=1}^k \ell_i$, is the set of fake exponents of the $A$-hyper\-geometric system with parameter $\beta$.  If $\beta$ is sufficiently generic, e.~g., if $v^{(i,b)}_\mu\not\in{\mathbb Z}$ for $\mu\neq i$, then the elements of $E_\beta$ are exponents and $\lvert E_\beta\rvert = \sum_{i=1}^k \ell_i$.  For sufficiently generic $\beta$ there is associated to each $v^{(i,b)}\in E_\beta$ a canonical series solution that lies in the Nilsson ring ${\mathbb C}[[x_0]][x^{v^{(i,b)}}]$ (see \cite[Section~3.4]{SST}), giving a total of $\sum_{i=1}^k \ell_i$ canonical series solutions. 

Let $\Delta(A)\subseteq V$ be the convex hull of $A\cup\{{\bf 0}\}$ and let ${\mathbb Z}A\subseteq V$ be the abelian group generated by $A$.  Let ${\rm vol}(A)$ be the volume of $\Delta(A)$ relative to Lebesgue measure on $V$ normalized so that a simplex defined as the convex hull of the origin and a basis for ${\mathbb Z}A$ has volume~$1$.    We show in Section 2 that
\begin{equation}
{\rm vol}(A) = \max\bigg\{\sum_{i=1}^k \ell_i,\sum_{j=k+1}^n \ell_j\bigg\}.
\end{equation}
When $\beta$ is sufficiently generic and
\begin{equation}
{\rm vol}(A) =\sum_{i=1}^k \ell_i,
\end{equation}
we thus get a total of ${\rm vol}(A)$ independent canonical series solutions in the Nilsson rings ${\mathbb C}[[x_0]][x^{v^{(i,b)}}]$, so we think of $x_0=0$ as a regular singularity when (1.3) holds and as an irregular singularity when (1.3) fails.

{\bf Example.} (The classical one-variable hypergeometric series.)  Take $A = \{{\bf a}_1,\dots,{\bf a}_{d+1}\}$ with 
\[ {\bf a}_1 = (1,\dots,1,-1,\dots,-1) \in{\mathbb Z}^d, \]
where $1$ is repeated $d+1-k$ times and $-1$ is repeated $k-1$ times and let ${\bf a}_2,\dots,{\bf a}_{d+1}$ be the standard unit basis vectors in ${\mathbb R}^d$ in the reverse of the usual ordering:
\[ {\bf a}_2 = (0,\dots,0,1),\dots,{\bf a}_{d+1} = (1,0,\dots,0). \]
Then
\[ {\bf a}_1+{\bf a}_2 +\cdots+{\bf a}_k - ({\bf a}_{k+1} + \cdots + {\bf a}_{d+1}) = {\bf 0} \]
and all $\ell_i$ and $\ell_j$ equal 1.  In this case the $A$-hypergeometric system is the system satisfied (for generic parameters) by
\[ {}_{d+1-k}F_{k-1}(\alpha_1,\dots,\alpha_{d+1-k};\gamma_1,\dots,\gamma_{k-1};x) = \sum_{s=0}^\infty \frac{(\alpha_1)_s\cdots(\alpha_{d+1-k})_s}{(\gamma_1)_s\cdots(\gamma_{k-1})_s s!}x^s \]
(see the last two items of Dwork-Loeser \cite[Appendix]{DL}: one takes the parameter 
\[ \beta = (-\alpha_1,\dots,-\alpha_{d+1-k},\gamma_1-1,\dots,\gamma_{k-1}-1).) \]
We have $\sum_{i=1}^k \ell_i=k$, so the condition that $k\geq d+1-k$ implies by (1.2) that ${\rm vol}(A) = k$.  The requirement that $k\geq d+1-k$ is the condition for $x=0$ to be a regular singularity of this hypergeometric equation.

We return to the general setting.  For arbitrary $\beta\in V_{\mathbb C}$, one expects the dimension of the space of logarithmic series solutions at $x_0=0$ to be $\sum_{i=1}^k \ell_i$ since that is the dimension in the generic case.  (There are no exceptional parameters in the codimension-one case.)  Our goal is to give explicit formulas for these solutions when $\beta$ is nonresonant.

Put $E'_\beta = \{ v=(v_1,\dots,v_n)\in E_\beta\mid \text{$v_i\not\in{\mathbb Z}_{<0}$ for $i=1,\dots,k$}\}$.
For $v\in E'_\beta$, put $M_v = \{i\in\{1,\dots,k\}\mid v_i\in{\mathbb Z}_{\geq 0}\}$, a nonempty set by the definition of $E_\beta$.  We define the {\it multiplicity\/} $m_v$ of $v$ by $m_v = \lvert M_v\rvert$.  We prove in Section 3 that $E'_\beta$ is nonempty and that
\begin{equation}
\sum_{v\in E'_\beta} m_v = \sum_{i=1}^k \ell_i.
\end{equation}

Every $v\in E'_\beta$ gives rise to a canonical series solution in the ring ${\mathbb C}[[x_0]][x^v]$ of the $A$-hypergeometric system with parameter $\beta$.  When $v\in E'_\beta$ satisfies certain conditions (see Theorem~6.5) we construct $m_v$ logarithmic series solutions of the $A$-hypergeometric system with parameter $\beta+u$ (where $u\in{\mathbb Z}A$) that are polynomials in $\log x_0$ of degrees $0,1,\dots,m_v-1$ with coefficients in ${\mathbb C}[[x_0]][x_0^{-1},x^v]$.  If all $v\in E'_\beta$ satisfy these conditions, then the set of all such solutions has cardinality $\sum_{i=1}^k \ell_i$ by (1.4).  We postpone stating these conditions until Section~6, but all nonresonant $\beta$ will satisfy them, so for nonresonant $\beta$ we get a full set of logarithmic series solutions at the origin (see Section~7).  It will be clear from our explicit formulas that the logarithm-free parts of these solutions have positive radius of convergence.

Other authors have described more general methods for finding logarithmic series solutions of $A$-hypergeometric systems: see Saito, Sturmfels, and Takayama \cite{SST}, Dickenstein, Martinez, and Matusevich \cite{DMM}, or Saito \cite{S}.  Although our method applies to a more restricted class of equations, we feel it is worth presenting because of its simplicity in comparison with other approaches and because of the explicit formulas obtained.

This paper is organized as follows.  In Section 2 we record some simple properties of the polytope $\Delta(A)$ which are needed in the proofs.  In particular, we prove Equation~(1.2).  In Section 3, we collect some properties of $E'_\beta$ and give a proof of Equation~(1.4).  In Sections 4 and~5 we give a construction of formal logarithmic solutions for general $A$-hypergeometric systems.  This extends the results of \cite{AS1}, where we generalized the notion of minimal negative support to construct formal logarithmic solutions of orders 1 and 2, and also the results of \cite[Section~7]{AS}, where we needed only a special case of Theorem~5.22.  In Section 6, we examine these formal solutions under the conditions of Section 1 and prove our main result, Theorem~6.5, which determines which of those formal solutions belong to a Nilsson ring.  In Section 7 we specialize to the case where $\beta$ is nonresonant.  We show that $\beta$ then satisfies the hypothesis of Corollary~6.6, so we get a complete set of logarithmic series solutions.  In Section 8 we prove a result relating $E'_\beta$ and $E'_\gamma$ when $\beta$ is nonresonant and $\gamma\in \beta + {\mathbb Z}A$.  

Our original motivation when beginning this project was to determine for which sets $A$ of lattice points the $A$-hypergeometric system can have maximal unipotent monodromy.  This question is treated in Section 9.  

\section{The polytope $\Delta(A)$}

In this section we collect some results about $\Delta(A)$ to verify (1.2) and to use later in the construction of solutions.  
Since the argument is symmetric, it suffices to prove that if
\begin{equation}
\sum_{i=1}^k \ell_i\geq \sum_{j=k+1}^n \ell_j,
\end{equation}
then (1.3) holds.  So from now until the end of the proof of Lemma~2.10, we assume that (2.1) holds.

For $i\in\{1,\dots,k\}$, let $\Delta_i$ be the convex hull of the origin and the set~$A\setminus\{{\bf a}_i\}$.  Each $\Delta_i$ is an $(n-1)$-simplex with a vertex at the origin.
\begin{lemma}
One has $\Delta(A) = \bigcup_{i=1}^k \Delta_i$.
\end{lemma}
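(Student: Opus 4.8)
The inclusion $\bigcup_{i=1}^k \Delta_i \subseteq \Delta(A)$ is immediate, since each $\Delta_i$ is by definition the convex hull of a subset of $A\cup\{{\bf 0}\}$. My plan for the reverse inclusion $\Delta(A)\subseteq \bigcup_{i=1}^k\Delta_i$ is to take an arbitrary point ${\bf x}\in\Delta(A)$ and exhibit an index $i_0\in\{1,\dots,k\}$ with ${\bf x}\in\Delta_{i_0}$. I would start by writing ${\bf x} = \sum_{\mu=1}^n t_\mu{\bf a}_\mu$ with $t_\mu\geq 0$ and $\sum_{\mu=1}^n t_\mu\leq 1$, the vertex ${\bf 0}$ absorbing any slack. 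The goal is then to produce a second representation of ${\bf x}$ of the same shape in which the coefficient of some ${\bf a}_{i_0}$ with $i_0\leq k$ vanishes while the other coefficients remain nonnegative with sum still $\leq 1$; this is exactly the assertion that ${\bf x}\in\Delta_{i_0}$.

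The device I would use is that the single relation (1.1) lets me shift the representation without changing ${\bf x}$: for every $\lambda$,
\[ {\bf x} = \sum_{i=1}^k (t_i+\lambda\ell_i){\bf a}_i + \sum_{j=k+1}^n (t_j-\lambda\ell_j){\bf a}_j, \]
and the corresponding coefficient sum is $\sum_\mu t_\mu + \lambda\bigl(\sum_{i=1}^k\ell_i-\sum_{j=k+1}^n\ell_j\bigr)$. I would choose $i_0\in\{1,\dots,k\}$ to minimize the ratio $t_i/\ell_i$ and set $\lambda=-t_{i_0}/\ell_{i_0}\leq 0$, which kills the coefficient of ${\bf a}_{i_0}$.

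It then remains to verify the two constraints. For $i\leq k$ the new coefficient equals $\ell_i\bigl(t_i/\ell_i-t_{i_0}/\ell_{i_0}\bigr)\geq 0$ by the minimality defining $i_0$, while for $j\geq k+1$ it equals $t_j+(t_{i_0}/\ell_{i_0})\ell_j\geq 0$ because $\lambda\leq 0$ and all terms are nonnegative. Finally, since $\lambda\leq 0$ and the bracketed quantity $\sum_{i=1}^k\ell_i-\sum_{j=k+1}^n\ell_j$ is nonnegative by the standing assumption (2.1), the correction term is $\leq 0$, so the new coefficient sum is at most $\sum_\mu t_\mu\leq 1$; hence ${\bf x}\in\Delta_{i_0}$. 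The step I expect to be the real crux is this last bound on the coefficient sum: nonnegativity of the shifted coefficients follows from the choice of $i_0$ alone, but controlling the sum is precisely where hypothesis (2.1) is indispensable, and it is exactly what forces the union to be taken over the indices $1,\dots,k$ rather than $k+1,\dots,n$.
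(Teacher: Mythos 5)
Your proposal is correct and is essentially the paper's own argument: both pick $i_0$ minimizing $t_i/\ell_i$ over $i\le k$, shift the representation along the single relation (1.1) to kill the $i_0$-th coefficient (your parameter $\lambda=-t_{i_0}/\ell_{i_0}$ is exactly the paper's substitution of ${\bf a}_{i_0}$ via (1.1)), and use minimality for nonnegativity and the standing inequality (2.1) to keep the coefficient sum at most $1$. Your closing remark correctly identifies where (2.1) enters and why the union runs over $\{1,\dots,k\}$.
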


\begin{proof}
Let $x\in\Delta(A)$.  We need to show that $x\in\Delta_i$ for some $i\in\{1,\dots,k\}$.  Since $x\in\Delta(A)$, there exist $c_1,\dots,c_n\in{\mathbb R}_{\geq 0}$ with $\sum_{\mu=1}^n c_\mu\leq 1$ such that
\begin{equation}
x=\sum_{\mu=1}^n c_\mu{\bf a}_\mu.
\end{equation}
Choose $i\in\{1,\dots,k\}$ such that the ratio $c_i/\ell_i$ is minimal.  To fix ideas, suppose that $c_{1}/\ell_{1}\leq c_i/\ell_i$ for $i=2,\dots,k$.  Solve (1.1) for ${\bf a}_{1}$ and substitute into (2.3):
\begin{equation}
 x = \sum_{i=2}^k \bigg(c_i - c_{1}\frac{\ell_i}{\ell_{1}}\bigg){\bf a}_i + \sum_{j=k+1}^n \bigg(c_j+c_{1}\frac{\ell_j}{\ell_{1}}\bigg){\bf a}_j. 
\end{equation}
The minimality of $c_{1}/\ell_{1}$ implies that all the coefficients in (2.4) are nonnegative.  The sum of these coefficients is
\[ {c_{1}} \bigg(\sum_{j=k+1}^n \frac{\ell_j}{\ell_{1}} - \sum_{i=2}^{k} \frac{\ell_i}{\ell_{1}}\bigg) +\sum_{\mu=2}^n c_\mu. \]
Inequality (2.1) implies that the coefficient of $c_{1}$ in this expression is less than or equal to $1$, so this expression is less than or equal to $1$.  Equation (2.4) then shows that $x\in\Delta_{1}$.  
\end{proof}

We extend the definition of the $\Delta_i$.  For $I\subseteq\{1,\dots,k\}$, let $\Delta_I$ be the convex hull of the origin and the set $A\setminus\{{\bf a}_i\}_{i\in I}$.  Thus $\Delta_I$ is an $(n-|I|)$-simplex.  
\begin{lemma}
For $I\subseteq\{1,\dots,k\}$ one has 
\begin{equation}
\bigcap_{i\in I} \Delta_i = \Delta_I.
\end{equation}
\end{lemma}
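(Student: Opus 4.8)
The inclusion $\Delta_I\subseteq\bigcap_{i\in I}\Delta_i$ is immediate: for each $i\in I$ the generating set $\{{\bf 0}\}\cup(A\setminus\{{\bf a}_\iota\}_{\iota\in I})$ of $\Delta_I$ is contained in the generating set $\{{\bf 0}\}\cup(A\setminus\{{\bf a}_i\})$ of $\Delta_i$, so $\Delta_I\subseteq\Delta_i$, and intersecting over $i\in I$ gives the claim. The content of the lemma is therefore the reverse inclusion, and the plan is to take $x\in\bigcap_{i\in I}\Delta_i$ and produce a single expression for $x$ as a nonnegative combination of $\{{\bf 0}\}\cup(A\setminus\{{\bf a}_i\}_{i\in I})$ with coefficient sum at most $1$, which is exactly what membership in the simplex $\Delta_I$ means.

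The tool is to track all ways of writing $x$ as a combination of $A$. Since the lattice of relations is generated by the single vector $\lambda=(\ell_1,\dots,\ell_k,-\ell_{k+1},\dots,-\ell_n)$, once one representation $x=\sum_\mu c^*_\mu{\bf a}_\mu$ is fixed, every representation has the form $c^*+t\lambda$ with $t\in{\mathbb R}$. Because every coordinate $\lambda_\mu$ is nonzero, the condition $c^*+t\lambda\geq{\bf 0}$ cuts out a closed interval $t_-\leq t\leq t_+$, in which the lower endpoint $t_-=\max_{1\leq\iota\leq k}(-c^*_\iota/\ell_\iota)$ comes from the coordinates $\iota\leq k$ (where $\lambda_\iota=\ell_\iota>0$ forces a lower bound on $t$) and the upper endpoint from the coordinates $\iota>k$. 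For each $i\in I$ (so $i\leq k$) the unique representation whose $i$-th coordinate vanishes occurs at $t=-c^*_i/\ell_i$, and since $A\setminus\{{\bf a}_i\}$ is a basis this is the only candidate witnessing $x\in\Delta_i$. Hence $x\in\Delta_i$ forces $-c^*_i/\ell_i$ to lie in $[t_-,t_+]$; but this value is one of the terms in the maximum defining $t_-$, so it cannot exceed $t_-$, and therefore it must equal $t_-$.

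The key step is the resulting observation that $t_-$ is independent of which $i\in I$ we use: for every $i\in I$ we have shown $-c^*_i/\ell_i=t_-$, so the representation $\tilde c:=c^*+t_-\lambda$ has $\tilde c_i=0$ simultaneously for all $i\in I$. It is nonnegative because $t_-\in[t_-,t_+]$, and fixing any one $i\in I$ we see that $\tilde c$ is precisely the representation realizing $x\in\Delta_i$, whose coordinates sum to at most $1$. Thus $x=\sum_{\mu\notin I}\tilde c_\mu{\bf a}_\mu$ exhibits $x$ as a nonnegative combination of $A\setminus\{{\bf a}_i\}_{i\in I}$ with coefficient sum at most $1$, i.e.\ $x\in\Delta_I$. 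I expect the only delicate point to be the bookkeeping that pins the relevant parameter to the common endpoint $t_-$; once membership in each $\Delta_i$ is translated into the single inequality $-c^*_i/\ell_i\geq t_-$, the simultaneous vanishing of all these coordinates, and with it the whole lemma, follows at once. This is the same minimal-ratio mechanism already exploited in the proof of Lemma~2.2.
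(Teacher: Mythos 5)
Your proof is correct, but it takes a genuinely different route from the paper's. The paper proves the reverse inclusion by induction on $\lvert I\rvert$: it reduces to the intersection of $\Delta_{I'}$ with one more simplex $\Delta_{\mu+1}$, writes $x$ in two ways, subtracts, and uses the sign pattern of the generating relation (1.1) to force either all the $d_\nu$ ($\nu\leq\mu$) or $c_{\mu+1}$ to vanish. You instead handle all $i\in I$ at once by parametrizing the entire one-dimensional family of representations $c^\ast+t\lambda$ of $x$ and observing that the nonnegativity constraints cut out an interval $[t_-,t_+]$ whose lower endpoint is simultaneously attained by every ratio $-c^\ast_i/\ell_i$ with $i\in I$; this is indeed the same minimal-ratio mechanism the paper uses for Lemma~2.2, now pushed one step further. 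Your argument is non-inductive and arguably more transparent about \emph{why} the intersection collapses to a single face (the uniqueness of the representation with prescribed vanishing coordinate, plus the fact that a term of a maximum that is also $\geq$ the maximum must equal it), while the paper's induction is more self-contained coordinate bookkeeping. The only points you leave implicit -- that the interval is nonempty because $x$ admits at least one nonnegative representation, and that the coefficient-sum bound transfers to $\tilde c$ because $\tilde c$ \emph{is} the unique representation witnessing $x\in\Delta_i$ -- are both immediate from what you wrote, so there is no gap.
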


\begin{proof}
Clearly $\Delta_I\subseteq \bigcap_{i\in I}\Delta_i$.  We prove the reverse inclusion by induction on $\lvert I\rvert$.  For $\lvert I\rvert=1$ the result is trivial.  Suppose the reverse inclusion holds for sets of cardinality $\mu$ and let $\lvert I\rvert = \mu+1$.  To fix ideas, suppose that $I=\{1,\dots,{\mu+1}\}$.  Let $I' = I\setminus\{{\mu+1}\}$.  We have by induction
\begin{equation}
\bigcap_{i\in I} \Delta_i = \bigg(\bigcap_{i\in I'} \Delta_i\bigg)\cap \Delta_{\mu+1} = \Delta_{I'}\cap\Delta_{\mu+1}.
\end{equation}
Let $x\in \Delta_{I'}\cap\Delta_{\mu+1}$.  Then we can write
\begin{equation}
 x = \sum_{\nu=\mu+1}^n c_\nu{\bf a}_\nu = \sum_{\substack{\nu=1\\ \nu\neq \mu+1}}^n d_\nu{\bf a}_\nu
\end{equation}
where
\[ 0\leq c_\nu,d_\nu\leq 1 \text{ and } \sum_\nu c_\nu\leq 1 \text{ and } \sum_\nu d_\nu\leq 1. \]
We get by subtraction
\[ {\bf 0} = \sum_{\nu=1}^\mu(-d_\nu){\bf a}_\nu + c_{\mu+1}{\bf a}_{\mu+1} +\sum_{\nu=\mu+2}^n (c_\nu-d_\nu){\bf a}_\nu. \]
By (1.1), the $\{-d_\nu\}_{\nu=1}^\mu$ and $c_{\mu+1}$ must all have the same sign.  If they are all nonnegative, then $d_\nu=0$ for $\nu=1,\dots,\mu$, so the second equation of (2.8) expresses $x$ as an element of $\Delta_I$.  If they are all nonpositive, then $c_{\mu+1}=0$, so the first equation of (2.8) expresses $x$ as an element of $\Delta_I$.  In either case we get $\bigcap_{i\in I}\Delta_i\subseteq \Delta_I$.
\end{proof}

It follows from Lemmas 2.2 and 2.5 that
\begin{equation}
{\rm vol}(A) = \sum_{i=1}^k {\rm vol}(\Delta_i).
\end{equation}
The following lemma then completes the proof of (1.3).
\begin{lemma}
For $i=1,\dots,k$ one has ${\rm vol}(\Delta_i)=\ell_i$.
\end{lemma}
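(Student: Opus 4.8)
The plan is to reinterpret the normalized volume of $\Delta_i$ as a lattice index. Write $L_i\subseteq{\mathbb Z}A$ for the subgroup generated by $A\setminus\{{\bf a}_i\}$. Since $\Delta_i$ is the $(n-1)$-simplex with vertices the origin and the $n-1$ points $\{{\bf a}_\mu\}_{\mu\neq i}$, and since the normalization assigns volume $1$ to the simplex spanned by the origin and a basis of ${\mathbb Z}A$, I would fix such a basis and express each ${\bf a}_\mu$ ($\mu\neq i$) in it; the absolute value of the determinant of the resulting $(n-1)\times(n-1)$ integer matrix is simultaneously ${\rm vol}(\Delta_i)$ and the index $[{\mathbb Z}A:L_i]$ (the $n-1$ vectors $\{{\bf a}_\mu\}_{\mu\neq i}$ being linearly independent by hypothesis). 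Thus the lemma reduces to showing $[{\mathbb Z}A:L_i]=\ell_i$.

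Next I would compute this index. As ${\mathbb Z}A$ is generated by $L_i$ together with ${\bf a}_i$, the quotient ${\mathbb Z}A/L_i$ is cyclic, generated by the class of ${\bf a}_i$, so the index equals the least positive integer $m$ with $m{\bf a}_i\in L_i$. Solving (1.1) for $\ell_i{\bf a}_i$ writes it as an integer combination of the ${\bf a}_\mu$ with $\mu\neq i$, whence $\ell_i{\bf a}_i\in L_i$ and the order of the class of ${\bf a}_i$ divides $\ell_i$.

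For the reverse divisibility, suppose $m{\bf a}_i\in L_i$, say $m{\bf a}_i=\sum_{\mu\neq i}c_\mu{\bf a}_\mu$ with $c_\mu\in{\mathbb Z}$. This is an integer relation on $A$ in which ${\bf a}_i$ occurs with coefficient $m$. Since the relation lattice has rank one and is generated by the primitive relation (1.1), this relation is an integer multiple $t$ of (1.1); comparing the coefficients of ${\bf a}_i$ gives $m=t\ell_i$, so $\ell_i\mid m$. Hence $m=\ell_i$ and ${\rm vol}(\Delta_i)=\ell_i$.

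The step that I expect to require the most care is the first: correctly identifying the normalized volume with the index $[{\mathbb Z}A:L_i]$, since this is where the chosen normalization of Lebesgue measure on $V$ enters. Once that identification is in place, the remaining arithmetic is immediate—the rank-one hypothesis forces every relation to be a multiple of (1.1), so both the upper and lower bounds on the order of ${\bf a}_i$ in ${\mathbb Z}A/L_i$ follow at once.
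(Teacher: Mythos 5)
Your proposal is correct and follows essentially the same route as the paper: the paper likewise reduces ${\rm vol}(\Delta_i)$ to the index $[{\mathbb Z}A:\langle A\setminus\{{\bf a}_i\}\rangle]$ (via the associated parallelotope) and then identifies that index as $\ell_i$ using the relation (1.1). Your write-up merely makes explicit the divisibility argument (primitivity of (1.1) and the cyclic quotient generated by ${\bf a}_i$) that the paper states in one line by listing the coset representatives ${\bf 0},{\bf a}_i,\dots,(\ell_i-1){\bf a}_i$.
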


\begin{proof}
Let $\tilde{\Delta}_i$ be the $(n-1)$-parallelotope associated to $\Delta_i$:
\[ \tilde{\Delta}_i = \bigg\{\sum_{\substack{\mu=1\\ \mu\neq i}}^n t_\mu{\bf a}_\mu\mid \text{$0\leq t_\mu<1$ for all $\mu$}\bigg\}. \]
Then $(n-1)!\text{vol}(\Delta_i) = \text{vol}(\tilde{\Delta}_i)$.  Furthermore,
\begin{equation}
\frac{\text{vol}(\tilde{\Delta}_i)}{(n-1)!} = [{\mathbb Z}A:\langle {\bf a}_1,\dots,\hat{\bf a}_i,\dots,{\bf a}_n\rangle],
\end{equation}
where $\langle {\bf a}_1,\dots,\hat{\bf a}_i,\dots,{\bf a}_n\rangle$ denotes the subgroup of ${\mathbb Z}A$ generated by the set $A\setminus\{{\bf a}_i\}$.  It follows from (1.1) that the index of this subgroup is $\ell_i$: each coset is represented by one element of the set 
\[ \{{\bf 0}, {\bf a}_i,2{\bf a}_i,\dots, (\ell_i-1){\bf a}_i\}. \] 
\end{proof}

At this point we drop the assumption that (2.1) holds.

We also need a description of the facets (the codimension-one faces) of $\Delta(A)$ that contain the origin.  The facets of $\Delta(A)$ containing the origin will also contain $n-2$ elements of $A$.  For $\mu,\nu\in\{1,\dots,n\}$, $\mu\neq \nu$, let $H_{\mu\nu}$ be the hyperplane of $V$ containing the origin and the points $A\setminus\{{\bf a}_\mu,{\bf a}_\nu\}$.  Associated to $H_{\mu\nu}$ is a linear function $h_{\mu\nu}$ on $V$ (unique up to sign) defined by the conditions 
\[ h_{\mu\nu}({\mathbb Z}A) = {\mathbb Z} \]
and $h_{\mu\nu}$ vanishes on $H_{\mu\nu}$:
\[ h_{\mu\nu}({\bf a}_\sigma) = 0\quad\text{for $\sigma\neq \mu,\nu$.} \]
This function is extended to $V_{\mathbb C}$ by defining for $c_1,\dots,c_n\in{\mathbb C}$
\[ h_{\mu\nu}\bigg(\sum_{\sigma=1}^n c_\sigma{\bf a}_\sigma\bigg) = \sum_{\sigma=1}^n c_\sigma h_{\mu\nu}({\bf a}_\sigma) = c_\mu h_{\mu\nu}({\bf a}_\mu) + c_\nu h_{\mu\nu}({\bf a}_\nu). \]

\begin{lemma}
The facets of $\Delta(A)$ that contain the origin lie in the hyperplanes $H_{ij}$ for $i\in\{1,\dots,k\}$ and $j\in\{k+1,\dots,n\}$.
\end{lemma}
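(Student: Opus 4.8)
The plan is to argue by contraposition: I will show that if the two omitted indices lie on the \emph{same} side of the relation (1.1)---both in $\{1,\dots,k\}$ or both in $\{k+1,\dots,n\}$---then the corresponding hyperplane cannot support a facet of $\Delta(A)$. A facet of $\Delta(A)$ containing the origin is the convex hull of the origin together with $n-2$ of the points of $A$; it therefore lies in the hyperplane $H_{\mu\nu}$ determined by the two omitted points ${\bf a}_\mu,{\bf a}_\nu$. Once same-side pairs are excluded, the omitted pair must consist of one index from $\{1,\dots,k\}$ and one from $\{k+1,\dots,n\}$, which is exactly the assertion of the lemma.

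First I would record the supporting-hyperplane criterion. The only vertices of $\Delta(A)$ not lying on $H_{\mu\nu}$ are ${\bf a}_\mu$ and ${\bf a}_\nu$, and each of $h_{\mu\nu}({\bf a}_\mu)$, $h_{\mu\nu}({\bf a}_\nu)$ is nonzero: if, say, $h_{\mu\nu}({\bf a}_\mu)=0$, then ${\bf a}_\mu$ would lie in the span of $A\setminus\{{\bf a}_\mu,{\bf a}_\nu\}$, contradicting the linear independence of the $(n-1)$-element set $A\setminus\{{\bf a}_\nu\}$. Consequently $H_{\mu\nu}$ supports a facet of $\Delta(A)$ if and only if $h_{\mu\nu}({\bf a}_\mu)$ and $h_{\mu\nu}({\bf a}_\nu)$ have the same sign, since the vertices on $H_{\mu\nu}$ contribute the value $0$ and only then do all vertices lie weakly on one side.

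The decisive step is to apply the linear function $h_{\mu\nu}$ to the relation (1.1). Because $h_{\mu\nu}$ vanishes on every ${\bf a}_\sigma$ with $\sigma\neq\mu,\nu$, only the terms indexed by $\mu$ and $\nu$ survive. When $\mu$ and $\nu$ lie on the same side of (1.1), both surviving terms carry the same sign coming from the relation, and one obtains
\[ \ell_\mu h_{\mu\nu}({\bf a}_\mu) + \ell_\nu h_{\mu\nu}({\bf a}_\nu) = 0. \]
Since $\ell_\mu,\ell_\nu$ are positive, this forces $h_{\mu\nu}({\bf a}_\mu)$ and $h_{\mu\nu}({\bf a}_\nu)$ to have opposite signs, so by the criterion $H_{\mu\nu}$ does not support a facet. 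When instead $\mu\in\{1,\dots,k\}$ and $\nu\in\{k+1,\dots,n\}$, the two terms appear on opposite sides of (1.1), yielding $\ell_\mu h_{\mu\nu}({\bf a}_\mu) = \ell_\nu h_{\mu\nu}({\bf a}_\nu)$ and hence the same sign. Thus the opposite-side hyperplanes are precisely the ones that can support a facet containing the origin.

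I expect the point requiring the most care to be the first reduction: justifying that a facet through the origin is genuinely cut out by one of the $H_{\mu\nu}$, i.e.\ that it omits exactly two of the ${\bf a}_\sigma$. This is where the general-position hypothesis on $A$ enters, and it is also what guarantees that $h_{\mu\nu}({\bf a}_\mu)$ and $h_{\mu\nu}({\bf a}_\nu)$ are nonzero, so that the sign dichotomy above is meaningful. Once that reduction is secured, the sign computation via (1.1) is immediate.
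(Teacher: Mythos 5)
Your proposal is correct and follows essentially the same route as the paper: apply the linear form $h_{\mu\nu}$ to the relation (1.1) so that only the two omitted terms survive, and read off from the signs of $h_{\mu\nu}({\bf a}_\mu)$ and $h_{\mu\nu}({\bf a}_\nu)$ whether the two omitted points lie on the same side of $H_{\mu\nu}$, which is exactly the paper's argument. You spell out in more detail the preliminary reduction (that a facet through the origin omits exactly two points of $A$ and that the relevant values of $h_{\mu\nu}$ are nonzero by the general-position hypothesis), which the paper states without proof, but the substance is identical.
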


\begin{proof}
Applying $h_{ij}$ to Equation (1.1) gives $\ell_ih_{ij}({\bf a}_i) = \ell_jh_{ij}({\bf a}_j)$.  This implies that $h_{ij}({\bf a}_i)$ and $h_{ij}({\bf a}_j)$ are both positive or both negative, hence ${\bf a}_i$ and ${\bf a}_j$ lie on the same side of $H_{ij}$.  Thus $H_{ij}$ contains the facet of $\Delta(A)$ containing $A\setminus\{{\bf a}_i,{\bf a}_j\}$.  

A similar argument shows that for $i,i'\in\{1,\dots,k\}$ the points ${\bf a}_i$ and ${\bf a}_{i'}$ lie on opposite sides of $H_{ii'}$ and for $j,j'\in\{k+1,\dots,n\}$ the points ${\bf a}_j$ and ${\bf a}_{j'}$ lie on opposite sides of $H_{jj'}$, so these hyperplanes do not contain facets.
\end{proof}

Recall that $\beta\in V_{\mathbb C}$ is {\it nonresonant\/} if $\beta + {\mathbb Z}A$ contains no points on the hyperplanes $H_{ij}$ determined by the facets of $\Delta(A)$ containing the origin.  Equivalently, by Lemma~2.12, this says that $h_{ij}(\beta)\not\in{\mathbb Z}$ for all $1\leq i\leq k$ and all $k+1\leq j\leq n$.  This condition will be applied via the following lemma.
\begin{lemma}
Suppose that $\beta\in V_{\mathbb C}$ is nonresonant and let $v\in{\mathbb C}^n$ satisfy $\sum_{\mu=1}^n v_\mu{\bf a}_\mu=\beta$.  If $v_i\in{\mathbb Z}$ for some $i\in\{1\dots,k\}$, then $v_j\not\in{\mathbb Z}$ for $j=k+1,\dots,n$.
\end{lemma}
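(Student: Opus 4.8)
The plan is to exploit the equivalent formulation of nonresonance given just before the statement, namely that $h_{ij}(\beta)\notin{\mathbb Z}$ for all $i\in\{1,\dots,k\}$ and $j\in\{k+1,\dots,n\}$, together with the fact that $h_{ij}$ is $A$-linear and supported only on the $i$ and $j$ coordinates. I would fix an index $j\in\{k+1,\dots,n\}$, assume for contradiction that $v_j\in{\mathbb Z}$, and deduce that then $h_{ij}(\beta)\in{\mathbb Z}$, violating nonresonance.

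The first step is to compute $h_{ij}(\beta)$. Since $\sum_{\mu=1}^n v_\mu{\bf a}_\mu=\beta$ and $h_{ij}({\bf a}_\sigma)=0$ for all $\sigma\neq i,j$, the formula defining $h_{ij}$ on $V_{\mathbb C}$ collapses to the two-term expression
\[ h_{ij}(\beta) = v_i h_{ij}({\bf a}_i) + v_j h_{ij}({\bf a}_j). \]

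The second step records the integrality of the two coefficients. Because $h_{ij}$ was normalized so that $h_{ij}({\mathbb Z}A)={\mathbb Z}$ and both ${\bf a}_i,{\bf a}_j$ lie in ${\mathbb Z}A$, we have $h_{ij}({\bf a}_i),h_{ij}({\bf a}_j)\in{\mathbb Z}$. Consequently, if $v_i\in{\mathbb Z}$ (the hypothesis of the lemma) and if in addition $v_j\in{\mathbb Z}$, then both summands above are integers, whence $h_{ij}(\beta)\in{\mathbb Z}$. This contradicts nonresonance, so $v_j\notin{\mathbb Z}$; as $j$ was arbitrary, the conclusion holds for every $j\in\{k+1,\dots,n\}$.

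There is no serious obstacle once the two-term reduction and the integrality of $h_{ij}({\bf a}_i)$ and $h_{ij}({\bf a}_j)$ are in hand; the only point requiring care is to invoke the correct hyperplane, pairing the given $i\in\{1,\dots,k\}$ with each $j\in\{k+1,\dots,n\}$ so that $h_{ij}$ is one of the functionals appearing in the definition of nonresonance via Lemma~2.12. I note in passing that neither the relation $\ell_i h_{ij}({\bf a}_i)=\ell_j h_{ij}({\bf a}_j)$ from Lemma~2.12 nor the nonvanishing of these values is needed for the argument.
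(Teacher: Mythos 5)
Your proposal is correct and follows essentially the same argument as the paper: both evaluate $h_{ij}$ on $\beta=\sum_\mu v_\mu{\bf a}_\mu$, reduce to the two-term expression $v_ih_{ij}({\bf a}_i)+v_jh_{ij}({\bf a}_j)$, and conclude that this would be an integer if $v_j$ were integral, contradicting nonresonance. Your explicit note that $h_{ij}({\bf a}_i),h_{ij}({\bf a}_j)\in{\mathbb Z}$ follows from the normalization $h_{ij}({\mathbb Z}A)={\mathbb Z}$ is a point the paper leaves implicit, but the proof is the same.
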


\begin{proof}
Let $i\in \{1,\dots,k\}$ be such that $v_i\in{\mathbb Z}$.  Suppose that $v_j\in{\mathbb Z}$ for some $j\in\{k+1,\dots,n\}$.  Then
\[ h_{ij}(\beta) = h_{ij}\bigg(\sum_{\mu=1}^n v_\mu{\bf a}_\mu\bigg) = v_ih_{ij}({\bf a}_i) + v_jh_{ij}({\bf a}_j)\in{\mathbb Z}, \]
contradicting the assumption that $\beta$ is nonresonant.
\end{proof}

\section{The set $E'_\beta$}

Put $\ell = (\ell_1,\dots,\ell_k,-\ell_{k+1},\dots,-\ell_n)$.
The following lemma implies in particular that $E'_\beta\neq\emptyset$.
\begin{lemma}
For each $v\in E_\beta$ there exists a unique $z_0\in{\mathbb Z}$ such that $v+z_0\ell\in E'_\beta$.
\end{lemma}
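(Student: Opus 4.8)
The plan is to study the entire orbit $\{v+z\ell\mid z\in{\mathbb Z}\}$ and to show that exactly one of its members lies in $E'_\beta$. First I would note that adding $z\ell$ leaves the parameter unchanged: since $\ell$ is, up to the sign convention, the relation (1.1), one has $\sum_{\mu=1}^n(v+z\ell)_\mu{\bf a}_\mu=\beta$ for every $z\in{\mathbb Z}$, so each $v+z\ell$ is a legitimate candidate for $E_\beta$ and only the first $k$ coordinates need to be monitored. I would then restate membership in the two sets coordinatewise: a vector $w$ with $\sum_\mu w_\mu{\bf a}_\mu=\beta$ lies in $E_\beta$ exactly when $w_i\in\{0,1,\dots,\ell_i-1\}$ for some $i\in\{1,\dots,k\}$ (because $A\setminus\{{\bf a}_i\}$ is a basis, fixing $w_i$ determines $w$, so $w=v^{(i,w_i)}$), and it lies in $E'_\beta$ when in addition $w_i\notin{\mathbb Z}_{<0}$ for all $i\in\{1,\dots,k\}$.

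The key reduction is that for $i\in\{1,\dots,k\}$ the coordinate $(v+z\ell)_i=v_i+z\ell_i$ is an integer if and only if $v_i$ is, so the set $S=\{i\in\{1,\dots,k\}\mid v_i\in{\mathbb Z}\}$ does not depend on $z$; it is nonempty because $v\in E_\beta$ forces some $v_{i_0}=b$ to be a nonnegative integer. For $i\notin S$ the coordinate $v_i+z\ell_i$ is never an integer, so it can neither land in $\{0,\dots,\ell_i-1\}$ nor in ${\mathbb Z}_{<0}$; both conditions are therefore vacuous off $S$, and the whole question reduces to controlling the integers $v_i+z\ell_i$ for $i\in S$.

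I would then set $z_0=\max_{i\in S}\lceil -v_i/\ell_i\rceil$, the least integer making every $v_i+z\ell_i$ with $i\in S$ nonnegative. For existence, at $z=z_0$ all these coordinates are $\geq 0$, so the negativity condition defining $E'_\beta$ holds; moreover an index $i^*$ attaining the maximum satisfies $v_{i^*}+(z_0-1)\ell_{i^*}<0\leq v_{i^*}+z_0\ell_{i^*}$, which forces $v_{i^*}+z_0\ell_{i^*}\in\{0,\dots,\ell_{i^*}-1\}$ and hence $v+z_0\ell\in E_\beta$. Together these give $v+z_0\ell\in E'_\beta$.

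For uniqueness, suppose $v+z\ell\in E'_\beta$. The negativity condition forces $v_i+z\ell_i\geq 0$ for every $i\in S$, whence $z\geq z_0$. If instead $z\geq z_0+1$, then for each $i\in S$ we would have $v_i+z\ell_i\geq(v_i+z_0\ell_i)+\ell_i\geq\ell_i$, so no $S$-coordinate could lie in $\{0,\dots,\ell_i-1\}$, contradicting $v+z\ell\in E_\beta$; hence $z=z_0$. The only delicate point is the interplay of the two constraints: the nonnegativity coming from $E'_\beta$ bounds $z$ from below while the ``fundamental range'' condition coming from $E_\beta$ bounds it from above, and these bounds coincide precisely because $z_0$ is simultaneously the smallest $z$ keeping all $S$-coordinates nonnegative and the largest $z$ keeping some $S$-coordinate below its $\ell_i$. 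Isolating the coordinates in $S$, whose integrality is forced, is what makes this squeeze transparent.
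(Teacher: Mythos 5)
Your proof is correct and follows essentially the same route as the paper's: you take $z_0$ to be the least integer making every integer coordinate among the first $k$ nonnegative, verify that the coordinate attaining this threshold lands in $\{0,\dots,\ell_i-1\}$ to get membership in $E_\beta$, and squeeze out uniqueness by noting that $z<z_0$ leaves a negative integer coordinate while $z>z_0$ pushes every integer coordinate past $\ell_i$. Your explicit formula $z_0=\max_{i\in S}\lceil -v_i/\ell_i\rceil$ and the isolation of the set $S$ of integral coordinates are just a more formulaic rendering of the same argument.
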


\begin{proof}
Let $v\in E_\beta$.  Then $v=v^{(i,b)}$ for some pair $(i,b)$ with $0\leq b<\ell_i$.  Let $z_0$ be the least integer such that 
for $i'=1,\dots,k$ the $i'$-th coordinate of $v^{(i,b)} + z_0\ell$ is not a negative integer.  For $z>z_0$ the vector $v^{(i,b)} + z\ell$ is not in $E_\beta$ because for $i'=1,\dots,k$ the $i'$-th coordinate cannot lie in the set $\{0,1,\dots,\ell_{i'}-1\}$.  

Fix an $i'\in\{1,\dots,k\}$ for which the $i'$-th coordinate of $v^{(i,b)}+(z_0-1)\ell$ is a negative integer.  Then the $i'$-th coordinate of $v^{(i,b)}+z\ell$ is a negative integer for all $z<z_0$.  This proves that $v^{(i,b)} + z\ell\not\in E'_\beta$ for all $z<z_0$.

For that fixed $i'$, let $b'$ be the $i'$-th coordinate of $v^{(i,b)}+z_0\ell$.  We must have $0\leq b'< \ell_{i'}$, so $v^{(i,b)} + z_0\ell = v^{(i',b')}$, which proves that $v^{(i,b)} + z_0\ell\in E'_\beta$.  
\end{proof}

For $v\in E_\beta$, we set $\tilde{v} = v+z_0\ell\in E'_\beta$, where $z_0$ is the unique integer for which $v+z_0\ell\in E'_\beta$.  

\begin{lemma}
Every element of $E'_\beta$ has minimal negative support, hence is an exponent.
\end{lemma}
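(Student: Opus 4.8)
The plan is to verify directly the combinatorial condition defining minimal negative support, after which the assertion that $v$ is an exponent follows from the general theory in \cite[Section~3.4]{SST} (the canonical series attached to a fake exponent of minimal negative support is a nonzero solution). Since the lattice of relations is the rank-one lattice ${\mathbb Z}\ell$, the negative support of a vector can only change by adding integer multiples of $\ell$. Writing ${\rm nsupp}(v) = \{\mu\mid v_\mu\in{\mathbb Z}_{<0}\}$, the vector $v$ has minimal negative support precisely when there is no $z\in{\mathbb Z}$ with ${\rm nsupp}(v+z\ell)\subsetneq{\rm nsupp}(v)$. So I would fix $v\in E'_\beta$ and first observe that, by the defining condition of $E'_\beta$, we have $v_i\notin{\mathbb Z}_{<0}$ for $i=1,\dots,k$, whence ${\rm nsupp}(v)\subseteq\{k+1,\dots,n\}$. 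It then suffices to rule out a strict decrease of the negative support in each of the two directions $z>0$ and $z<0$.

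For $z>0$ the point is that the negative support cannot shrink at all, i.e.\ ${\rm nsupp}(v)\subseteq{\rm nsupp}(v+z\ell)$. For the first $k$ coordinates, $(v+z\ell)_i=v_i+z\ell_i$: if $v_i$ is a nonnegative integer this stays a positive integer, and if $v_i\notin{\mathbb Z}$ it stays a non-integer, so no index $i\leq k$ becomes a negative integer. For $j\in{\rm nsupp}(v)$ (necessarily $j>k$) we have $(v+z\ell)_j=v_j-z\ell_j$, which is only more negative and hence still lies in ${\mathbb Z}_{<0}$. Thus every index of ${\rm nsupp}(v)$ survives, and ${\rm nsupp}(v+z\ell)\subsetneq{\rm nsupp}(v)$ is impossible.

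For $z<0$ the mechanism is different, and this is the step that genuinely uses the definition of $E_\beta$. Since $v\in E'_\beta\subseteq E_\beta$, we may write $v=v^{(i_0,b_0)}$ for some $i_0\in\{1,\dots,k\}$ and $b_0\in\{0,1,\dots,\ell_{i_0}-1\}$, so that $v_{i_0}=b_0$ lies in the fundamental range $0\leq b_0<\ell_{i_0}$. Writing $z=-z'$ with $z'\geq 1$, the $i_0$-th coordinate of $v+z\ell$ equals $b_0-z'\ell_{i_0}$, which is an integer satisfying $b_0-z'\ell_{i_0}<\ell_{i_0}-\ell_{i_0}=0$; hence $i_0\in{\rm nsupp}(v+z\ell)$. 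But $i_0\leq k$, so $i_0\notin{\rm nsupp}(v)$, giving ${\rm nsupp}(v+z\ell)\not\subseteq{\rm nsupp}(v)$. Combining the two cases, no $z\neq 0$ produces a proper subset, so $v$ has minimal negative support.

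I expect the only delicate point to be the $z<0$ case: the positive direction is forced by a simple monotonicity argument, but to preclude shrinking in the negative direction one must exploit that an element of $E_\beta$ always has at least one coordinate $i_0\leq k$ confined to $\{0,\dots,\ell_{i_0}-1\}$, which immediately becomes a negative integer under any negative shift by $\ell$. Once minimal negative support is established, the conclusion that $v$ is an exponent is immediate from \cite[Section~3.4]{SST}.
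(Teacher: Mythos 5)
Your proof is correct and follows essentially the same route as the paper: for $z>0$ the negative support (contained in $\{k+1,\dots,n\}$) can only grow since the last $n-k$ coordinates of $\ell$ are negative, and for $z<0$ the $i_0$-th coordinate, confined to $\{0,\dots,\ell_{i_0}-1\}$ by the definition of $E_\beta$, becomes a negative integer and so enters the negative support. The paper's argument is just a terser version of exactly these two observations.
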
  

\begin{proof}
Let $v^{(i,b)}\in E'_\beta$.  Its negative support is contained in the set $\{k+1,\dots,n\}$.  For $z>0$ the negative support of $v^{(i,b)}+z\ell$ cannot be a proper subset of the negative support of $v^{(i,b)}$ because the $j$-th coordinate of $\ell$ is a negative integer for $j=k+1,\dots,n$.  
For $z<0$, the negative support of $v^{(i,b)}+z\ell$ is not a proper subset of the negative support of $v^{(i,b)}$ because its $i$-th coordinate is a negative integer.  
\end{proof}

\begin{proof}[Proof of Equation $(1.4)$]
Let 
\[ T = \{(i,b)\mid \text{$i\in\{1,\dots,k\}$, $b\in\{0,1,\dots,\ell_i-1\}$}\}, \] 
a set of cardinality $\sum_{i=1}^k \ell_i$.  From the definition of $E_\beta$, we have a surjective map $T\to E_\beta$ defined by $(i,b)\mapsto v^{(i,b)}$. From Lemma~3.1 we have a surjective map $E_\beta\to E'_\beta$, which we denoted by $v^{(i,b)}\mapsto \tilde{v}^{(i,b)}$.  
The composition of these two maps gives a surjective map $T\to E'_\beta$, where $(i,b)\mapsto\tilde{v}^{(i,b)}$.  To prove (1.4), we need to show that for $(i,b), (i',b')\in T$ we have $\tilde{v}^{(i,b)} = \tilde{v}^{(i',b')}$ if and only if both the $i$-th and $i'$-th coordinates of $\tilde{v}^{(i,b)}$ are nonnegative integers.

One direction of this implication is clear.  By construction, the $i$-th coordinate of $\tilde{v}^{(i,b)}$ lies in ${\mathbb Z}_{\geq 0}$ and the $i'$-th coordinate of $\tilde{v}^{(i',b')}$ lies in~${\mathbb Z}_{\geq 0}$.  So the equality $\tilde{v}^{(i,b)} = \tilde{v}^{(i',b')}$ implies that both the $i$-th and $i'$-th coordinates lie in ${\mathbb Z}_{\geq 0}$.  

Conversely, let $\tilde{v}^{(i,b)}\in E'_\beta$ and suppose that the $i'$-th coordinate of $\tilde{v}^{(i,b)}$ is also a nonnegative integer.  There exists a unique integer $z$ such that the $i'$-th coordinate of $\tilde{v}^{(i,b)}+z\ell$ equals some $b'\in\{0,1,\dots,\ell_{i'}-1\}$.  This implies that $\tilde{v}^{(i,b)}+z\ell=v^{(i',b')}$ by the definition of $v^{(i',b')}$.  Then $v^{(i',b')}-z\ell = \tilde{v}^{(i,b)}\in E'_\beta$, so by Lemma~3.1 $-z$ is the unique integer such that $v^{(i',b')}-z\ell\in E'_\beta$.  
But then 
\[ \tilde{v}^{(i',b')} = v^{(i',b')}-z\ell = \tilde{v}^{(i,b)}. \]
\end{proof}

\section{Some elementary formulas}

In this section we record formulas for certain constants that will appear as coefficients of the formal logarithmic series solutions in the next section.
Let $v\in{\mathbb C}$ and let $r\in{\mathbb Z}_{\geq 0}$.  There is a unique sequence of functions $\{f_l^{(v,r)}(t)\}_{l\in{\mathbb Z}}$ of one variable $t$ such that $f_0^{(v,r)}(t) = t^v\log^r t$, $d/dt\big(f_l^{(v,r)}(t)\big) = f_{l-1}^{(v,r)}(t)$ for all $l\in{\mathbb Z}$, and $f_l^{(v,r)}(t)$ equals $t^{v+l}$ times a polynomial in $\log t$ with constant coefficients.  For $l<0$ these functions are obtained by $-l$ differentiations of $t^v\log^r t$.  For $l>0$ they are obtained by $l$ integrations of $t^v\log^r t$, taking the constant of integration to be $0$ each time.

To give explicit formulas for some of the $f^{(v,r)}_l(t)$, we define expressions $M_{l,s}(v)$ for $l\in{\mathbb Z}$ and an integer $s\geq 0$.  First of all, for $l=0$ set
\begin{equation}
M_{0,s}(v) = \begin{cases} 1 & \text{if $s=0$,} \\ 0 & \text{if $s>0$.} \end{cases} 
\end{equation}
For $l>0$ (and $v+l<0$ if $v\in{\mathbb Z}_{<0}$), set
\begin{equation}
M_{l,s}(v) = \frac{(-1)^s}{(v+1)\cdots(v+l)} \sum_{i_1+\cdots+i_l=s}\frac{1}{(v+1)^{i_1}}\cdots \frac{1}{(v+l)^{i_l}}. 
\end{equation}
For $l<0$, set
\begin{equation}
M_{l,s}(v) = \begin{cases} S^{(-l)}_{-l-s}(v,v-1,\dots,v+l+1) & \text{if $s\leq-l$,} \\ 0 & \text{if $s>-l$.}
\end{cases} 
\end{equation}
where $S^{(\sigma)}_\tau(x_1,\dots,x_\sigma)$ is the elementary symmetric polynomial of degree $\tau$ in $\sigma$ variables.  (Note: $S^{(\sigma)}_0(x_1,\dots,x_\sigma)=1$, so $M_{l,-l}(v) = 1$.)

The following result is straightforward to check by induction on $l$.
\begin{lemma}
The functions $f_l^{(v,r)}(t)$ are given by
\begin{equation}
f_l^{(v,r)}(t) = t^{v+l}\sum_{s=0}^r M_{l,s}(v) r(r-1)\cdots(r-s+1)\log^{r-s}t
\end{equation}
for all $l$ if $v\not\in{\mathbb Z}_{<0}$ and for all $l$ such that $v+l<0$ if $v\in{\mathbb Z}_{<0}$.  Equivalently, Equation~$(4.5)$ {\em fails\/} to give the correct expression for $f_l^{(v,r)}(t)$ if and only if $v\in{\mathbb Z}_{<0}$ and $v+l\in{\mathbb Z}_{\geq 0}$.  
\end{lemma}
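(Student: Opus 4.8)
The plan is to induct on $l$, working outward from the base case $l=0$ in both directions, so that each integration step $l\to l+1$ and each differentiation step $l\to l-1$ is governed by a single recurrence among the constants $M_{l,s}(v)$. The base case is immediate: by $(4.1)$ only the $s=0$ term of $(4.5)$ survives, giving $t^v\log^r t=f_0^{(v,r)}(t)$. For the inductive step, write $P_s=r(r-1)\cdots(r-s+1)$ and differentiate $(4.5)$ termwise using
\[ \frac{d}{dt}\bigl(t^{v+l}\log^{r-s}t\bigr)=(v+l)\,t^{v+l-1}\log^{r-s}t+(r-s)\,t^{v+l-1}\log^{r-s-1}t. \]
Re-indexing the second family of terms by $s\mapsto s-1$ and using $P_{s-1}(r-s+1)=P_s$, the requirement $\frac{d}{dt}f_l^{(v,r)}=f_{l-1}^{(v,r)}$ collapses to the single recurrence
\[ M_{l-1,s}(v)=(v+l)\,M_{l,s}(v)+M_{l,s-1}(v), \]
with the convention $M_{l,-1}(v)=0$. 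The whole positive assertion thus reduces to verifying this recurrence wherever both sides are defined, together with the remark that the integration prescription introduces no spurious constant: formula $(4.5)$ is purely $t^{v+l}$ times a polynomial in $\log t$, which is exactly the normalization ``constant of integration $0$,'' and in every valid upward step $v+l+1\neq 0$.

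I would verify the recurrence by computing the generating function $G_l(X)=\sum_{s\geq 0}M_{l,s}(v)X^s$ in closed form. Summing a geometric series in each index for $l>0$, and applying $\sum_t S^{(p)}_t(y_1,\dots,y_p)Y^t=\prod_m(1+y_mY)$ with $Y=X^{-1}$ to $(4.3)$ for $l<0$, yields
\[ G_l(X)=\prod_{m=1}^{l}\frac{1}{v+m+X}\ \ (l>0),\qquad G_0(X)=1,\qquad G_l(X)=\prod_{m=1}^{-l}(v-m+1+X)\ \ (l<0). \]
In every case one checks the uniform relation $G_{l-1}(X)=(v+l+X)\,G_l(X)$, which is precisely the generating-function form of the recurrence above. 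This proves $(4.5)$ for all $l$ when $v\notin{\mathbb Z}_{<0}$ (no denominator $v+m+X$ ever has a vanishing constant term), and, when $v\in{\mathbb Z}_{<0}$, for every $l$ with $v+l<0$ (the product $(v+1)\cdots(v+l)$ in $(4.2)$ is then a product of nonzero integers, and no $t^{-1}$ is ever integrated).

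For the failure direction, suppose $v\in{\mathbb Z}_{<0}$ and put $l^\ast=-v$, so that $v+l^\ast=0$. By the case just established, $f_{l^\ast-1}^{(v,r)}(t)$ equals $t^{-1}$ times a polynomial in $\log t$ of degree exactly $r$, whose top coefficient $M_{l^\ast-1,0}(v)$ equals $1$ (if $v=-1$) or $1/\bigl((v+1)\cdots(v+l^\ast-1)\bigr)$, hence is nonzero. Integrating the leading term $t^{-1}\log^r t$ produces $\tfrac{1}{r+1}\log^{r+1}t$, so $f_{l^\ast}^{(v,r)}(t)$ acquires a nonzero $\log^{r+1}t$ term; and since each further integration raises the power of $t$ by one while preserving the top logarithmic degree (the exponent of $t$ now being $\geq 0$), the degree in $\log t$ stays equal to $r+1$ for all $l\geq l^\ast$. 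No such function can agree with $(4.5)$, whose degree in $\log t$ is at most $r$ (and whose coefficients $M_{l,s}(v)$ are in any case undefined here, the product $(v+1)\cdots(v+l)$ in $(4.2)$ containing the factor $v+l^\ast=0$). This gives the asserted equivalence.

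The routine part is the recurrence, which the generating-function computation disposes of cleanly. I expect the only delicate point to be the boundary bookkeeping of the last paragraph: confirming that the transition at $v+l=-1\to 0$ raises the logarithmic degree by exactly one, with a genuinely nonzero coefficient, so that the breakdown of $(4.5)$ is an honest failure rather than a coincidental match. Everything else is a direct induction driven by the single identity $\frac{d}{dt}f_l^{(v,r)}=f_{l-1}^{(v,r)}$.
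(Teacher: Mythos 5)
Your proof is correct and follows the same route the paper intends: the paper simply asserts the lemma is ``straightforward to check by induction on $l$,'' and your argument is exactly that induction, reduced to the recurrence $M_{l-1,s}(v)=(v+l)M_{l,s}(v)+M_{l,s-1}(v)$ and verified via the generating functions $G_l(X)$, with the boundary case $v+l=0$ handled separately to establish the failure direction. The generating-function packaging and the explicit check that the logarithmic degree jumps to $r+1$ at $l=-v$ are welcome details that the paper leaves implicit.
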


In the case where $v\in{\mathbb Z}_{<0}$ and $v+l\in{\mathbb Z}_{\geq 0}$ the function $f_l^{(v,r)}(t)$ equals $t^{v+l}$ times a polynomial of degree $r+1$ in $\log t$ with constant coefficients.  These functions will play no role in our construction of formal logarithmic solutions so we do not give explicit formulas for them.

For convenient reference in Sections 6 and 7 we record the values of $M_{l,s}(v)$ for $s=0,1$.  Recall the Pochhammer symbol: for a nonnegative integer~$l$, 
\[ (v)_l= v(v+1)\cdots(v+l-1) . \]
For $l=0$,
\begin{equation}
M_{0,0}(v) = 1\quad\text{and}\quad M_{0,1}(v) = 0.
\end{equation}
For $l>0$ (and $v+l<0$ if $v\in{\mathbb Z}_{<0}$),
\begin{equation}
 M_{l,0}(v) = \frac{1}{(v+1)_l} 
 \end{equation}
and 
\begin{equation}
M_{l,1}(v) = -\frac{1}{(v+1)_l}\bigg( \frac{1}{v+1} + \frac{1}{v+2} + \cdots + \frac{1}{v+l}\bigg). 
\end{equation}
For $l<0$,
\begin{equation}
M_{l,0}(v) = v(v-1)\cdots(v+l+1) = (-1)^{-l}(-v)_{-l} 
\end{equation}
and
\begin{align}
 M_{l,1}(v) &= \sum_{i=0}^{-l-1} v(v-1)\cdots\widehat{(v-i)}\cdots (v+l+1) \\
  & = (-1)^{-l}(-v)_{-l} \sum_{i=0}^{-l-1} \frac{1}{v-i},
\end{align}
where (4.11) is valid for $v\not\in\{0,1,\dots,-l-1\}$.  (Note: $M_{-1,1}(v) = 1$.)

\section{Formal logarithmic solutions}

The results of this section apply to more general $A$-hypergeometric systems than those described in Section~1.  Specifically, we no longer restrict to the codimension-one case.  For that reason the notation of this section differs from the remainder of the paper.  We reimpose our original notation and hypotheses after this section.  Let $A = \{{\bf a}_1,\dots,{\bf a}_n\}\subseteq{\mathbb Z}^d$ and let $\beta\in{\mathbb C}^d$.  

We describe the system of partial differential equations associated to $A$ and $\beta$. The lattice of relations $L$ on the set $A$ is defined to be
\[ L = \bigg\{ l=(l_1,\dots,l_n)\in{\mathbb Z}^d\mid \sum_{j=1}^n l_j{\bf a}_j = {\bf 0}\bigg\}. \]
To each $l\in L$ there is associated a box operator
\begin{equation}
 \Box_l = \prod_{l_j>0} \partial_j^{l_j} - \prod_{l_j<0} \partial_j^{-l_j}.  
 \end{equation}
For ${\bf a}_j\in A$ write ${\bf a}_j = (a_{1j},\dots,a_{dj})$ and let $\beta=(\beta_1,\dots,\beta_d)\in {\mathbb C}^d$.  Associated to $\beta$ and the set $A$ are the Euler (or homogeneity) operators
\begin{equation}
 \sum_{j=1}^n a_{ij}x_j\partial_j - \beta_i \text{ for $i=1,\dots,d$.} 
 \end{equation}
 The \emph{A-hypergeometric system with parameter $\beta$\/} is the system consisting of the box operators (5.1) for $l\in L$ and the Euler operators~(5.2).
 
In this section, we give a method for constructing formal logarithmic solutions to the $A$-hypergeometric systems with parameter $\beta$.  This method was outlined in \cite{AS1}; here we provide explicit formulas for higher-order log solutions.  In the next section, we apply this method in the setting of Section 1 and determine which of these formal solutions belong to a Nilsson ring.  

Let $v=(v_1,\dots,v_n)\in{\mathbb C}^n$ satisfy $\sum_{i=1}^n v_i{\bf a}_i = \beta$.  
For a nonnegative integer $r$, put ${\mathcal P}_r=\{1,\dots,n\}^r$, the set of sequences of length $r$ from the set $\{1,\dots,n\}$.  (In the special case $r=0$, we define ${\mathcal P}_0 = \{\emptyset\}$, i.~e., the only sequence of length $0$ is the empty sequence.)  Let $P = (p_1,\dots,p_r) \in{\mathcal P}_r$.  Up to ordering, the sequence $P$ is determined by its associated multiplicity function $\rho_P:\{1,\dots,n\}\to{\mathbb N}$ defined by
\[ \rho_P(i) = {\rm card}\{j\mid p_j=i\}. \]
(Thus for $r=0$ we have $P=\emptyset$ and $\rho_P(i) = 0$ for all $i$.)  

We associate to $v$ and $P$ a generating series $\Psi_v^P(x,T)$ (where $T^c=T_1^{c_1}\cdots T_d^{c_d}$ for $c=(c_1,\dots,c_d)\in{\mathbb C}^d$) defined using the functions $f_l^{(v,r)}(t)$ defined in Section~4.  First put 
\begin{equation}
\Psi^{(\rho_P(i))}_{v_i}(x_i,T) = \sum_{l_i\in{\mathbb Z}} f^{(v_i,\rho_P(i))}_{l_i}(x_i)T^{(v_i+l_i){\bf a}_i},
\end{equation}
then set
\begin{equation}
\Psi_v^P(x,T) = \prod_{i=1}^n \Psi_{v_i}^{(\rho_P(i))}(x_i,T).
\end{equation}
This series depends only on $\rho_P$, not on $P$, i.~e., it is independent of the ordering of the sequence~$P$.  But for bookkeeping purposes, it is useful to index it by $P$.  We use these series to construct formal logarithmic solutions of the $A$-hypergeometric system.

Write 
\begin{equation}
\Psi_v^P(x,T) = \sum_{u\in{\mathbb Z}A} \Psi^P_{v,\beta+u}(x)T^{\beta+u}.
\end{equation}
Equation (5.3) and the definition of the $f^{(v_i,\rho_P(i))}_{l_i}(x_i)$ imply that
\[ \partial_i \Psi_{v_i}^{(\rho_P(i))}(x_i,T) = T^{{\bf a}_i}\Psi_{v_i}^{(\rho_P(i))}(x_i,T), \]
hence by (5.4)
\[ \partial_i \Psi_v^P(x,T) = T^{{\bf a}_i}\Psi_{v}^P(x,T), \]
or, equivalently, from (5.5)
\begin{equation}
\partial_i\Psi^P_{v,\beta+u}(x) = \Psi^P_{v,\beta+u-{\bf a}_i}(x).
\end{equation}

\begin{proposition}
For all $u\in{\mathbb Z}A$, the expression $\Psi^P_{v,\beta+u}(x)$ satisfies the box operators $\Box_l$ for $l\in L$.
\end{proposition}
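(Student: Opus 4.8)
The plan is to exploit the single derivative-shift relation (5.6) iteratively and then to invoke the defining property of the lattice of relations $L$.  The box operator $\Box_l$ is the difference of the two monomials $\prod_{l_j>0}\partial_j^{l_j}$ and $\prod_{l_j<0}\partial_j^{-l_j}$ in the $\partial_j$, so it suffices to compute the effect of each monomial on the coefficient $\Psi^P_{v,\beta+u}(x)$ separately and to check that the two effects coincide.

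First I would iterate (5.6).  Since that relation holds for every $u\in{\mathbb Z}A$ and since each ${\bf a}_j$ lies in ${\mathbb Z}A$, one may apply $\partial_j$ repeatedly without leaving the index set: applying $\partial_j$ a total of $l_j$ times (for $l_j>0$) gives $\partial_j^{l_j}\Psi^P_{v,\beta+u}(x)=\Psi^P_{v,\beta+u-l_j{\bf a}_j}(x)$, and because the operators $\partial_j$ for distinct $j$ commute, the full product yields $\prod_{l_j>0}\partial_j^{l_j}\Psi^P_{v,\beta+u}(x)=\Psi^P_{v,\beta+u-w_+}(x)$, where $w_+=\sum_{l_j>0}l_j{\bf a}_j$.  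The same reasoning applied to the negative part gives $\prod_{l_j<0}\partial_j^{-l_j}\Psi^P_{v,\beta+u}(x)=\Psi^P_{v,\beta+u-w_-}(x)$, where $w_-=\sum_{l_j<0}(-l_j){\bf a}_j$.

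The decisive step is to compare $w_+$ and $w_-$.  Because $l\in L$ we have $\sum_j l_j{\bf a}_j={\bf 0}$, which rearranges to $\sum_{l_j>0}l_j{\bf a}_j=\sum_{l_j<0}(-l_j){\bf a}_j$, that is, $w_+=w_-$.  Hence both monomials send $\Psi^P_{v,\beta+u}(x)$ to the one and the same coefficient $\Psi^P_{v,\beta+u-w_+}(x)$, and so their difference $\Box_l\Psi^P_{v,\beta+u}(x)$ vanishes for every $u\in{\mathbb Z}A$, as required.

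I do not expect a genuine obstacle here: once (5.6) is available the argument is purely formal.  The only points needing care are that iterating (5.6) stays within the index set ${\mathbb Z}A$ (which is guaranteed since each ${\bf a}_j\in{\mathbb Z}A$) and that the two monomials of $\Box_l$ produce identical shifts, which is precisely the statement that $l$ is a relation on $A$.  No analytic or convergence input is needed, since the claim is an identity among the coefficients in the formal expansion (5.5); the real content has already been invested in constructing the series $\Psi_v^P(x,T)$ and in establishing the shift relation (5.6).
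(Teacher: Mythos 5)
Your argument is correct and is essentially identical to the paper's proof: both iterate the shift relation (5.6) to show that each of the two monomials of $\Box_l$ sends $\Psi^P_{v,\beta+u}(x)$ to $\Psi^P_{v,\beta+u-w}(x)$ with the same shift $w$, and then use $\sum_j l_j{\bf a}_j={\bf 0}$ to conclude the difference vanishes. Your write-up merely makes explicit the commutativity of the $\partial_j$ and the fact that the shifts stay in ${\mathbb Z}A$, which the paper leaves implicit.
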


\begin{proof}
Let $l=(l_1,\dots,l_n)\in L$.  Equation (5.6) implies that
\begin{equation}
\Box_l(\Psi^P_{v,\beta+u}(x)) = \Psi^P_{v,\beta+u-\sum_{l_j>0}l_j{\bf a}_j}(x)-\Psi^P_{v,\beta+u+\sum_{l_j<0} l_j{\bf a}_j}(x).
\end{equation}
But $l\in L$ implies that $\sum_{l_j>0}l_j{\bf a}_j =-\sum_{l_j<0} l_j{\bf a}_j$, so the right-hand side of (5.8) vanishes.
\end{proof}

In general the $\Psi^P_{v,\beta+u}(x)$ do not satisfy the Euler operators because of the presence of logarithms. We need to impose some further conditions to construct solutions.
Let $v\in{\mathbb C}^n$ and let $I\subseteq\{1,\dots,n\}$.  We define the {\it $I$-negative support of $v$\/} to be
\[ \text{$I$-nsupp}(v) = \{ i\in I\mid v_i\in{\mathbb Z}_{<0}\}. \]
Let $u\in{\mathbb Z}A$ and put
\[ L(u) = \bigg\{l=(l_1,\dots,l_n)\in{\mathbb Z}^n\mid \sum_{j=1}^n l_j{\bf a}_j = u\bigg\}, \]
so $L=L({\bf 0})$.  
We say that $v$ has {\it minimal $(I,u)$-negative support\/} if $\text{$I$-nsupp}(v+l)$ is not a proper subset of $\text{$I$-nsupp}(v)$ for any $l\in L(u)$.  For example, to say that $v$ has minimal negative support is equivalent to saying that $v$ has minimal $(\{1,\dots,n\},{\bf 0})$-negative support.  We define
\begin{equation}
 L_{v,I}(u) = \{ l\in L(u)\mid \text{$I$-nsupp$(v+l) = I$-nsupp$(v)$}\}. 
 \end{equation}
If $I\subseteq I'$, then $I$-nsupp$(v)\subseteq I'$-nsupp$(v)$, so $L_{v,I'}(u)\subseteq L_{v,I}(u)$.

From (5.3), (5.4), and (5.5) we compute the formal series
\begin{align}
\Psi^P_{v,\beta+u}(x) &= \sum_{l\in L(u)} \prod_{i=1}^n f^{(v_i,\rho_P(i))}_{l_i}(x_i)\\ \nonumber
&\in {\mathbb C}[\log x_1,\dots,\log x_n][[x_1,x_1^{-1},\dots,x_n,x_n^{-1}]].
\end{align}
Let $\bar{P}=\{i\in\{1,\dots,n\}\mid \rho_P(i)>0\}$.  For a subset $I\subseteq\{1,\dots,n\}$ we denote its complement by $I^{\rm c}$.
\begin{lemma}
Suppose that $v$ has minimal $(\bar{P}^{\rm c},u)$-negative support.  Then
\begin{equation}
\Psi^P_{v,\beta+u}(x) = \sum_{l\in L_{v,\bar{P}^{\rm c}}(u)} \prod_{i=1}^n f^{(v_i,\rho_P(i))}_{l_i}(x_i).
\end{equation}
\end{lemma}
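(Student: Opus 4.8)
The plan is to begin from the formal expansion (5.10), which writes $\Psi^P_{v,\beta+u}(x)$ as $\sum_{l\in L(u)}\prod_{i=1}^n f^{(v_i,\rho_P(i))}_{l_i}(x_i)$, and to deduce (5.12) by showing that every term indexed by an $l\in L(u)\setminus L_{v,\bar{P}^{\rm c}}(u)$ is identically zero. The surviving terms are precisely those indexed by $L_{v,\bar{P}^{\rm c}}(u)$, so no analysis of the surviving terms is required; the entire argument is a term-by-term vanishing.

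First I would unwind what it means for $l\notin L_{v,\bar{P}^{\rm c}}(u)$. By the definition (5.9) this says $\bar{P}^{\rm c}$-nsupp$(v+l)\neq\bar{P}^{\rm c}$-nsupp$(v)$. The hypothesis that $v$ has minimal $(\bar{P}^{\rm c},u)$-negative support guarantees that $\bar{P}^{\rm c}$-nsupp$(v+l)$ is never a \emph{proper} subset of $\bar{P}^{\rm c}$-nsupp$(v)$; since the two sets are unequal, the first cannot be contained in the second at all. Hence there is an index $i\in\bar{P}^{\rm c}$ lying in $\bar{P}^{\rm c}$-nsupp$(v+l)$ but not in $\bar{P}^{\rm c}$-nsupp$(v)$, i.e. $v_i+l_i\in{\mathbb Z}_{<0}$ while $v_i\notin{\mathbb Z}_{<0}$. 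Since $l_i$ and $v_i+l_i$ are integers, $v_i$ is an integer too, and with $v_i\notin{\mathbb Z}_{<0}$ this forces $v_i\in{\mathbb Z}_{\geq0}$ and $l_i\leq -v_i-1<0$.

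Next I would show that this single factor annihilates the whole product. Since $i\in\bar{P}^{\rm c}$ we have $\rho_P(i)=0$, so the relevant factor is $f^{(v_i,0)}_{l_i}(x_i)$. Because $v_i\in{\mathbb Z}_{\geq0}$, the exceptional case of Lemma 4.4 cannot occur, so formula (4.5) applies and gives $f^{(v_i,0)}_{l_i}(x_i)=M_{l_i,0}(v_i)\,x_i^{v_i+l_i}$. With $l_i<0$, formula (4.9) reads $M_{l_i,0}(v_i)=v_i(v_i-1)\cdots(v_i+l_i+1)$, a product of the $-l_i$ consecutive integers from $v_i+l_i+1$ up to $v_i$. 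Since $v_i\geq0$ while $v_i+l_i+1\leq0$, this range of integers contains $0$, so one factor vanishes, $M_{l_i,0}(v_i)=0$, and therefore $f^{(v_i,0)}_{l_i}(x_i)=0$. The entire product $\prod_{i=1}^n f^{(v_i,\rho_P(i))}_{l_i}(x_i)$ is then $0$, as claimed.

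I expect the only delicate point—rather than a genuine obstacle—to be ensuring that (4.5) is legitimately applied to the factor one is killing: that formula fails exactly when $v_i\in{\mathbb Z}_{<0}$, so one must know the chosen index has $v_i\in{\mathbb Z}_{\geq0}$. This is precisely why $i$ is drawn from the set difference $\bar{P}^{\rm c}$-nsupp$(v+l)\setminus\bar{P}^{\rm c}$-nsupp$(v)$, which simultaneously forces $v_i+l_i$ to be a negative integer and $v_i$ to be a nonnegative integer, making both the applicability of (4.5) and the vanishing of $M_{l_i,0}(v_i)$ automatic. Discarding all the vanishing terms and summing over $L_{v,\bar{P}^{\rm c}}(u)$ then yields (5.12).
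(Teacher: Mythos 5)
Your proof is correct and follows essentially the same route as the paper's: expand via (5.10), locate an index $i\in\bar{P}^{\rm c}$ with $v_i\in{\mathbb Z}_{\geq 0}$ and $v_i+l_i\in{\mathbb Z}_{<0}$ for each $l\notin L_{v,\bar{P}^{\rm c}}(u)$, and kill the corresponding term because $M_{l_i,0}(v_i)=v_i(v_i-1)\cdots(v_i+l_i+1)=0$. You even spell out the small set-theoretic step (non-containment following from minimality plus inequality) that the paper leaves implicit.
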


\begin{proof}
Let $l\in L(u)$.  Since $v$ has minimal $(\bar{P}^{\rm c},u)$-negative support, if $l\not\in L_{v,\bar{P}^{\rm c}}(u)$, i.~e., if $\bar{P}^{\rm c}$-nsupp$(v+l)\neq\bar{P}^{\rm c}$-nsupp$(v)$, then there exists an index $i\in\bar{P}^{\rm c}$ such that $v_i+l_i\in{\mathbb Z}_{<0}$ but $v_i\in{\mathbb Z}_{\geq 0}$.  Since $\rho_P(i) = 0$, the product on the right-hand side of (5.10) contains the factor 
$f_{l_i}^{(v_i,0)}(x_i)$.  But this factor equals $0$ by Lemma~4.4 and (4.9):
\[ f_{l_i}^{(v_i,0)}(x_i) = M_{l_i,0}(v_i)x_i^{v_i+l_i} = v_i(v_i-1)\cdots (v_i+l_i+1)x_i^{v_i+l_i} = 0.  \]
Equation~(5.12) then follows from (5.10).
\end{proof}

\begin{proposition}
Suppose that $v$ has minimal $(\bar{P}^{\rm c},u)$-negative support and also minimal $(\bar{P}^{\rm c}\cup\{j\},u)$-negative support for all $j\in \bar{P}$.  Then all factors $f^{(v_i,\rho_P(i))}_{l_i}(x_i)$ appearing on the right-hand side of 
$(5.12)$ are given by $(4.5)$.
\end{proposition}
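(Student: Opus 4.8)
The plan is to reduce everything to Lemma~4.4 and then rule out the one exceptional configuration it identifies. By Lemma~4.4, the formula $(4.5)$ fails for a factor $f^{(v_i,\rho_P(i))}_{l_i}(x_i)$ precisely when $v_i\in{\mathbb Z}_{<0}$ and $v_i+l_i\in{\mathbb Z}_{\geq 0}$. Since the sum in $(5.12)$ ranges over $l\in L_{v,\bar{P}^{\rm c}}(u)$, it therefore suffices to show that for every such $l$ and every $i\in\{1,\dots,n\}$ the pair of conditions $v_i\in{\mathbb Z}_{<0}$ and $v_i+l_i\in{\mathbb Z}_{\geq 0}$ cannot hold simultaneously. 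I would split the verification according to whether $i\in\bar{P}^{\rm c}$ or $i\in\bar{P}$, since the two cases are controlled by different hypotheses.

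For $i\in\bar{P}^{\rm c}$ the membership $l\in L_{v,\bar{P}^{\rm c}}(u)$ already forces the conclusion. Indeed, by the definition $(5.9)$ we have $\bar{P}^{\rm c}$-nsupp$(v+l)=\bar{P}^{\rm c}$-nsupp$(v)$, so if $v_i\in{\mathbb Z}_{<0}$ then $i\in\bar{P}^{\rm c}$-nsupp$(v)=\bar{P}^{\rm c}$-nsupp$(v+l)$, whence $v_i+l_i\in{\mathbb Z}_{<0}$ and the exceptional case is impossible. This step uses only the first hypothesis of the proposition, through the very definition of $L_{v,\bar{P}^{\rm c}}(u)$.

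The case $i\in\bar{P}$ is where the extra hypotheses must enter, and this is the crux of the argument: the defining condition of $L_{v,\bar{P}^{\rm c}}(u)$ says nothing about coordinates indexed by $\bar{P}$, so each such index requires its own minimality assumption. Here I would argue by contradiction. Assume $v_i\in{\mathbb Z}_{<0}$ and $v_i+l_i\in{\mathbb Z}_{\geq 0}$, and set $I=\bar{P}^{\rm c}\cup\{i\}$. Since $i\in\bar{P}$ we have $i\notin\bar{P}^{\rm c}$, and $v_i\in{\mathbb Z}_{<0}$ gives $I$-nsupp$(v)=\bar{P}^{\rm c}$-nsupp$(v)\cup\{i\}$. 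On the other hand $v_i+l_i\in{\mathbb Z}_{\geq 0}$ gives $i\notin I$-nsupp$(v+l)$, while $\bar{P}^{\rm c}$-nsupp$(v+l)=\bar{P}^{\rm c}$-nsupp$(v)$ exactly as above, so $I$-nsupp$(v+l)=\bar{P}^{\rm c}$-nsupp$(v)$. Thus $I$-nsupp$(v+l)$ is a proper subset of $I$-nsupp$(v)$ for the element $l\in L(u)$, contradicting the hypothesis that $v$ has minimal $(\bar{P}^{\rm c}\cup\{i\},u)$-negative support. Hence the exceptional configuration cannot occur for $i\in\bar{P}$ either.

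I do not expect any genuine analytic or combinatorial difficulty; the only thing to manage carefully is the bookkeeping of which hypothesis governs which index. The indices in $\bar{P}^{\rm c}$, where $\rho_P(i)=0$, are handled directly by the equality of negative supports built into $L_{v,\bar{P}^{\rm c}}(u)$, whereas each index $i\in\bar{P}$, where $\rho_P(i)>0$, needs the separate minimality of $(\bar{P}^{\rm c}\cup\{i\},u)$-negative support. Once the two cases are cleanly separated, each collapses to a one-line comparison of negative supports, so the main obstacle is conceptual clarity about the role of the enlarged index sets rather than any technical computation.
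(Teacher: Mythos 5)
Your argument is correct and is essentially identical to the paper's proof: both split into the cases $i\in\bar{P}^{\rm c}$, handled directly by the defining condition $\bar{P}^{\rm c}\text{-nsupp}(v+l)=\bar{P}^{\rm c}\text{-nsupp}(v)$ of $L_{v,\bar{P}^{\rm c}}(u)$, and $i\in\bar{P}$, handled by noting that the exceptional configuration of Lemma~4.4 would make $(\bar{P}^{\rm c}\cup\{i\})$-nsupp$(v+l)$ a proper subset of $(\bar{P}^{\rm c}\cup\{i\})$-nsupp$(v)$, contradicting the minimality hypothesis. Your write-up merely spells out the proper-containment step in slightly more detail.
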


\begin{proof}
Let $i\in\bar{P}^{\rm c}$.  There does not exist $l\in L_{v,\bar{P}^{\rm c}}(u)$ with $v_i\in{\mathbb Z}_{<0}$ and $v_i+l_i\in{\mathbb Z}_{\geq 0}$ because $\bar{P}^{\rm c}\text{-nsupp}(v+l) = \bar{P}^{\rm c}\text{-nsupp}(v)$.
This implies by Lemma~4.4 that if $\rho_P(i)=0$, then the factor $f_{l_i}^{(v_i,0)}(x_i)$ on the right-hand side of (5.12) is given by~(4.5).

Let $j\in \bar{P}$.  There is no $l\in L_{v,\bar{P}^{\rm c}}(u)$ with $v_j\in{\mathbb Z}_{<0}$ and $v_j+l_j\in{\mathbb Z}_{\geq 0}$, otherwise $(\bar{P}^{\rm c}\cup\{j\})$-nsupp$(v+l)$ would be a proper subset of $(\bar{P}^{\rm c}\cup\{j\})$-nsupp$(v)$, contradicting the assumption that $v$ has minimal $(\bar{P}^{\rm c}\cup\{j\},u)$-negative support.  This implies by Lemma~4.4 that if $\rho_P(j)>0$, then the factor $f_{l_j}^{(v_j,\rho_P(j))}(x_j)$ on the right-hand side of (5.12) is given by~(4.5).
\end{proof}

When the hypothesis of Proposition 5.13 is satisfied, we compute from (5.12) and~(4.5)
\begin{multline}
\Psi^P_{v,\beta+u}(x) = \sum_{l\in L_{v,\bar{P}^{\rm c}}(u)} x_1^{v_1+l_1}\cdots x_n^{v_n+l_n}  \\ 
\cdot\prod_{i=1}^n \bigg(\sum_{j_i=0}^{\rho_P(i)} M_{l_i,j_i}(v_i) \rho_P(i)(\rho_P(i)-1)\cdots(\rho_P(i)-j_i+1) \log^{\rho_P(i)-j_i}x_i\bigg). 
\end{multline}

Let $\rho,\rho':\{1,\dots,n\}\to{\mathbb N}$ be arbitrary functions.  We write $\rho'\leq\rho$ if $\rho'(i)\leq\rho(i)$ for all~$i$.  Put
\[ R(\rho_P) = \{ \rho:\{1,\dots,n\}\to{\mathbb N}\mid \rho\leq\rho_P\}. \]
We may rewrite (5.14) as
\begin{multline}
\Psi^P_{v,\beta+u}(x) = \sum_{l\in L_{v,\bar{P}^{\rm c}}(u)} x_1^{v_1+l_1}\cdots x_n^{v_n+l_n}  \\
\cdot\sum_{\rho\in R(\rho_P)} \prod_{i=1}^n M_{l_i,\rho(i)}(v_i) \rho_P(i)(\rho_P(i)-1)\cdots(\rho_P(i)-\rho(i)+1)\frac{\log^{\rho_P(i)}x_i}{\log^{\rho(i)}x_i}.
\end{multline}

The set $R(\rho_P)$ is related to subsequences of $P$.  For $0\leq k\leq r$, let $S(P,k)$ denote the set of subsequences of $P$ of length $k$.  For $Q\in S(P,k)$ we clearly have $\rho_Q\leq \rho_P$.  Conversely, if we are given $\rho\in R(\rho_P)$, then setting $k=\sum_{i=1}^n \rho(i)$, there exist $Q\in S(P,k)$ such that $\rho_Q=\rho$.  Furthermore, it is easy to see that for this $\rho$ we have
\[ {\rm card}\{Q\in S(P,k)\mid \rho_Q=\rho\} = \prod_{i=1}^n \rho_P(i)(\rho_P(i)-1)\cdots(\rho_P(i)-\rho(i)+1). \]
We can thus rewrite (5.15) as
\begin{multline}
\Psi^P_{v,\beta+u}(x) = \sum_{l\in L_{v,\bar{P}^{\rm c}}(u)} x_1^{v_1+l_1}\cdots x_n^{v_n+l_n} \\
\cdot\sum_{k=0}^r \sum_{Q=(p_{j_1},\dots,p_{j_k})\in S(P,k)} \bigg(\prod_{i=1}^n M_{l_i,\rho_Q(i)}(v_i)\bigg) \frac{\log x_{p_1}\cdots\log x_{p_r}} {\log x_{p_{j_1}}\cdots\log x_{p_{j_k}}}.
\end{multline}

For $Q\in S(P,k)$, we set
\begin{equation}
\Phi^Q_{v,\beta+u}(x) = \sum_{l\in L_{v,\bar{P}^{\rm c}}(u)} \bigg(\prod_{i=1}^n M_{l_i,\rho_Q(i)}(v_i)\bigg)x_1^{v_1+l_1}\cdots x_n^{v_n+l_n}.
\end{equation}

From Equation (5.16) we have the following result.
\begin{corollary}
Under the hypothesis of Proposition $5.13$ we have
\begin{equation}
\Psi^P_{v,\beta+u}(x) = \sum_{k=0}^r \sum_{Q=(p_{j_1},\dots,p_{j_k})\in S(P,k)} \Phi^Q_{v,\beta+u}(x)\frac{\log x_{p_1}\cdots\log x_{p_r}} {\log x_{p_{j_1}}\cdots\log x_{p_{j_k}}},
\end{equation}
where the $\Phi_{v,\beta+u}^Q(\Lambda)$ are given by $(5.17)$.
\end{corollary}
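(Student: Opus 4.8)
The plan is to obtain the result as a direct bookkeeping rearrangement of Equation~(5.16), which under the hypothesis of Proposition~5.13 already expresses $\Psi^P_{v,\beta+u}(x)$ as a double sum: an outer sum over $l\in L_{v,\bar{P}^{\rm c}}(u)$ carrying the monomial $x_1^{v_1+l_1}\cdots x_n^{v_n+l_n}$, and an inner sum over $0\leq k\leq r$ and $Q\in S(P,k)$ carrying the coefficient $\prod_{i=1}^n M_{l_i,\rho_Q(i)}(v_i)$ together with the logarithmic factor $\log x_{p_1}\cdots\log x_{p_r}/(\log x_{p_{j_1}}\cdots\log x_{p_{j_k}})$. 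The essential observation is that this logarithmic factor depends only on the sequence $P$ and its chosen subsequence $Q$, and is entirely independent of the relation vector $l$.

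Because the log-factor does not involve $l$, I would interchange the two sums in~(5.16), moving the sum over $l\in L_{v,\bar{P}^{\rm c}}(u)$ inside, past the sum over $k$ and $Q\in S(P,k)$. After the interchange, for each fixed pair $(k,Q)$ the coefficient multiplying the factor $\log x_{p_1}\cdots\log x_{p_r}/(\log x_{p_{j_1}}\cdots\log x_{p_{j_k}})$ becomes
\[
\sum_{l\in L_{v,\bar{P}^{\rm c}}(u)} \bigg(\prod_{i=1}^n M_{l_i,\rho_Q(i)}(v_i)\bigg)x_1^{v_1+l_1}\cdots x_n^{v_n+l_n}.
\]
Comparing with the definition~(5.17), this inner sum is exactly $\Phi^Q_{v,\beta+u}(x)$, and substituting this identification yields the claimed formula~(5.18).

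The only point that strictly needs justification is the legitimacy of interchanging the two summations, and here there is essentially no obstacle: the sum over $k$ ranges over the finite set $\{0,1,\dots,r\}$ and, for each $k$, the sum over $Q\in S(P,k)$ is finite since $P$ is a fixed sequence of length $r$. Thus the rearrangement amounts to swapping a finite sum with a formal (possibly infinite) sum over $l$, which is valid termwise in the formal series ring recorded in~(5.10). I would therefore expect the proof to be a single short paragraph invoking~(5.16), the $l$-independence of the logarithmic coefficients, and the definition~(5.17).
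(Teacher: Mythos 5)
Your proposal is correct and matches the paper's own (essentially immediate) derivation: the paper simply states that the corollary follows from Equation~(5.16), which is exactly the finite-sum interchange and identification with the definition~(5.17) that you carry out. No further comment is needed.
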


The series $\Phi^Q_{v,\beta+u}(x)$ depends on $\rho_Q$.  It is independent of the ordering of $Q$, but it also depends on the choice of $P$ for which $Q\in S(P,k)$ because the sum in (5.17) is over $L_{v,\bar{P}^{\rm c}}(u)$.
We impose an additional condition to eliminate the dependence on $P$.
\begin{lemma}
Suppose that $v$ has minimal $(\bar{Q}^{\rm c},u)$-negative support.  Then
\begin{equation}
\Phi^Q_{v,\beta+u}(x) = \sum_{l\in L_{v,\bar{Q}^{\rm c}}(u)} \bigg(\prod_{i=1}^n M_{l_i,\rho_Q(i)}(v_i)\bigg)x_1^{v_1+l_1}\cdots x_n^{v_n+l_n}.
\end{equation}
\end{lemma}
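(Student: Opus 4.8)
The plan is to observe that the right-hand side of the asserted identity is a sub-sum of the defining expression (5.17) for $\Phi^Q_{v,\beta+u}(x)$, and then to show that the omitted terms all vanish. Since $Q$ is a subsequence of $P$ we have $\rho_Q\leq\rho_P$, hence $\bar{Q}\subseteq\bar{P}$ and therefore $\bar{P}^{\rm c}\subseteq\bar{Q}^{\rm c}$. By the inclusion recorded immediately after (5.9), this gives $L_{v,\bar{Q}^{\rm c}}(u)\subseteq L_{v,\bar{P}^{\rm c}}(u)$. Thus the sum over $L_{v,\bar{Q}^{\rm c}}(u)$ in the statement ranges over a subset of the index set $L_{v,\bar{P}^{\rm c}}(u)$ used to define $\Phi^Q_{v,\beta+u}(x)$, and it suffices to prove that each $l\in L_{v,\bar{P}^{\rm c}}(u)\setminus L_{v,\bar{Q}^{\rm c}}(u)$ contributes the zero summand.

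Fix such an $l$. Because $l\notin L_{v,\bar{Q}^{\rm c}}(u)$ we have $\bar{Q}^{\rm c}$-nsupp$(v+l)\neq\bar{Q}^{\rm c}$-nsupp$(v)$, while the hypothesis that $v$ has minimal $(\bar{Q}^{\rm c},u)$-negative support says that $\bar{Q}^{\rm c}$-nsupp$(v+l)$ is not a proper subset of $\bar{Q}^{\rm c}$-nsupp$(v)$. The only way to reconcile these is that some index $i\in\bar{Q}^{\rm c}$ lies in $\bar{Q}^{\rm c}$-nsupp$(v+l)$ but not in $\bar{Q}^{\rm c}$-nsupp$(v)$; that is, $v_i+l_i\in{\mathbb Z}_{<0}$ while $v_i\notin{\mathbb Z}_{<0}$. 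As $l_i\in{\mathbb Z}$, this forces $v_i\in{\mathbb Z}_{\geq 0}$ and $l_i<0$.

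I would then finish with the computation used in the proof of Lemma~5.11. Since $i\in\bar{Q}^{\rm c}$ we have $\rho_Q(i)=0$, so the factor indexed by $i$ in the summand is $M_{l_i,0}(v_i)$. By (4.9) this equals the product $v_i(v_i-1)\cdots(v_i+l_i+1)$ of the consecutive integers running from $v_i+l_i+1$ up to $v_i$; since $v_i+l_i+1\leq 0\leq v_i$, this range contains $0$, so the product vanishes, and hence so does the full coefficient $\prod_{i=1}^n M_{l_i,\rho_Q(i)}(v_i)$. This shows the omitted terms are zero and establishes the identity.

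The only real subtlety, and the step I would treat most carefully, is the dichotomy in the second paragraph: the minimality hypothesis must be invoked to exclude the possibility that an index of $\bar{Q}^{\rm c}$-nsupp$(v)$ is merely lost, so that the discrepancy necessarily comes from an index that newly enters the negative support. This is exactly what guarantees $v_i\in{\mathbb Z}_{\geq 0}$ rather than $v_i\in{\mathbb Z}_{<0}$, and that sign condition is precisely what makes the product in (4.9) contain the factor $0$.
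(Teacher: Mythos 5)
Your proof is correct and follows essentially the same route as the paper's own argument: both reduce to showing the summands indexed by $L_{v,\bar{P}^{\rm c}}(u)\setminus L_{v,\bar{Q}^{\rm c}}(u)$ vanish, both use the minimality hypothesis to produce an index $i\in\bar{Q}^{\rm c}$ with $v_i\in{\mathbb Z}_{\geq 0}$ and $v_i+l_i\in{\mathbb Z}_{<0}$, and both conclude via $M_{l_i,0}(v_i)=0$ from (4.9). Your explicit justification of the dichotomy step is a welcome elaboration of what the paper leaves implicit.
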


\begin{proof}
Since $\bar{Q}\subseteq\bar{P}$ we have $\bar{P}^{\rm c}\subseteq\bar{Q}^{\rm c}$ and $L_{v,\bar{Q}^{\rm c}}(u) \subseteq L_{v,\bar{P}^{\rm c}}(u)$.  Suppose that $l\in L_{v,\bar{P}^{\rm c}}(u)$ but $l\not\in L_{v,\bar{Q}^{\rm c}}(u)$.  Then
\[ \bar{Q}^{\rm c}\text{-nsupp}(v+l)\neq \bar{Q}^{\rm c}\text{-nsupp}(v). \]
Since $v$ has minimal $(\bar{Q}^{\rm c},u)$-negative support, there exists $i\in\bar{Q}^{\rm c}$ such that $v_i+l_i\in{\mathbb Z}_{< 0}$ but $v_i\in{\mathbb Z}_{\geq 0}$.  But then $\rho_Q(i)=0$ and $M_{l_i,0}(v_i)=0$ since $v_i\in{\mathbb Z}_{\geq 0}$ and $v_i+l_i\in{\mathbb Z}_{<0}$ (see Equation (4.9)).  This shows that the terms in the sum on the right-hand side of (5.17) vanish for $l\in L_{v,\bar{P}^{\rm c}}(u)\setminus L_{v,\bar{Q}^{\rm c}}(u)$.  
\end{proof}

\begin{theorem}
Suppose that $v$ has minimal $(I,u)$-negative support for all $I\subseteq\{1,\dots,n\}$ with $\lvert I\rvert\geq n-r$
and let $l^{(k)}=(l^{(k)}_1,\dots,l^{(k)}_n)\in L$ for $k=1,\dots,r$.  Then the expression 
\begin{equation}
\sum_{P=(p_1,\dots,p_r)\in{\mathcal P}_r} l^{(1)}_{p_1}\cdots l^{(r)}_{p_r} \Psi^P_{v,\beta+u}(x) 
\end{equation}
is a solution of the $A$-hypergeometric system with parameter $\beta+u$.
\end{theorem}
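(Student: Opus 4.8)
The plan is to check that the expression (5.21) is annihilated by both families of operators defining the $A$-hypergeometric system with parameter $\beta+u$, namely the box operators (5.1) and the Euler operators (5.2). The box operators cost nothing: Proposition 5.7 shows that each $\Psi^P_{v,\beta+u}(x)$ is killed by every $\Box_l$ with $l\in L$, and since these operators are linear, the linear combination (5.21) is killed as well. All of the work is therefore with the Euler operators, which I will treat simultaneously as the single ${\mathbb C}^d$-valued operator ${\mathcal E}:=\sum_{j=1}^n {\bf a}_j\,x_j\partial_j-(\beta+u)$.

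The heart of the argument is a \emph{lowering identity} for a single series,
\[ {\mathcal E}\,\Psi^P_{v,\beta+u}(x)=\sum_{m=1}^r {\bf a}_{p_m}\,\Psi^{P_{\widehat m}}_{v,\beta+u}(x), \]
where $P_{\widehat m}=(p_1,\dots,\widehat{p_m},\dots,p_r)$ denotes $P$ with its $m$-th entry removed. To derive it I would first invoke the hypothesis: for every $P\in{\mathcal P}_r$ one has $\lvert\bar P^{\rm c}\rvert\ge n-r$ and $\lvert\bar P^{\rm c}\cup\{j\}\rvert\ge n-r$, so Lemma 5.11 reduces $\Psi^P_{v,\beta+u}(x)$ to the sum (5.12) over $L_{v,\bar P^{\rm c}}(u)$, and Proposition 5.13 guarantees that every factor appearing there is given by (4.5). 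Differentiating each monomial $t^{v+l}\log^{\rho-s}t$ in (4.5) yields the functional identity $t\frac{d}{dt}f^{(v,\rho)}_l(t)=(v+l)f^{(v,\rho)}_l(t)+\rho\, f^{(v,\rho-1)}_l(t)$ (its right-hand factor $f^{(v,\rho-1)}_l$ is again given by (4.5), since by Lemma 4.4 the exceptional condition $v\in{\mathbb Z}_{<0}$, $v+l\in{\mathbb Z}_{\ge 0}$ does not involve $\rho$). Applying $\sum_j {\bf a}_j\,x_j\partial_j$ termwise to (5.12) then splits into a diagonal contribution $\sum_l\big(\sum_j(v_j+l_j){\bf a}_j\big)\prod_i f^{(v_i,\rho_P(i))}_{l_i}(x_i)=(\beta+u)\Psi^P_{v,\beta+u}(x)$, using $\sum_j v_j{\bf a}_j=\beta$ and $\sum_j l_j{\bf a}_j=u$ for $l\in L(u)$, and a lowering contribution $\sum_j{\bf a}_j\,\rho_P(j)\,\Psi'_j(x)$, where $\Psi'_j$ is the series obtained from $\Psi^P_{v,\beta+u}$ by lowering the multiplicity at $j$ by one. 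Since this series depends only on the multiplicity function, $\Psi'_j=\Psi^{P_{\widehat m}}_{v,\beta+u}$ for any $m$ with $p_m=j$, and grouping the $\rho_P(j)$ choices of such $m$ turns the lowering contribution into the asserted right-hand side.

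The subtle point, which I expect to be the main obstacle, is the identification $\Psi'_j=\Psi^{P_{\widehat m}}_{v,\beta+u}$ in the case $\rho_P(j)=1$, where $j$ leaves the support: then $P_{\widehat m}$ has support $\bar P\setminus\{j\}$, whose complement is $\bar P^{\rm c}\cup\{j\}$. The lowered sum I have produced runs over $L_{v,\bar P^{\rm c}}(u)$, whereas the canonical reduced form of $\Psi^{P_{\widehat m}}_{v,\beta+u}$ runs over the smaller set $L_{v,\bar P^{\rm c}\cup\{j\}}(u)$, so I must show that the surplus terms vanish. Here the full force of the hypothesis enters through $I=\bar P^{\rm c}\cup\{j\}$: minimal $(I,u)$-negative support forbids any surplus index $l$ from having $v_j\in{\mathbb Z}_{<0}$ with $v_j+l_j\in{\mathbb Z}_{\ge 0}$, leaving only the case $v_j\in{\mathbb Z}_{\ge 0}$ with $v_j+l_j\in{\mathbb Z}_{<0}$; for such $l$ the relevant factor is $f^{(v_j,0)}_{l_j}(x_j)=M_{l_j,0}(v_j)x_j^{v_j+l_j}$, which vanishes by (4.9) because the product $v_j(v_j-1)\cdots(v_j+l_j+1)$ passes through $0$. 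Thus the surplus contributes nothing and the lowering identity holds.

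With the lowering identity established, I would conclude by substituting it into (5.21) and interchanging the sums over $P\in{\mathcal P}_r$ and $m\in\{1,\dots,r\}$:
\[ {\mathcal E}\sum_{P\in{\mathcal P}_r} l^{(1)}_{p_1}\cdots l^{(r)}_{p_r}\Psi^P_{v,\beta+u}(x)=\sum_{m=1}^r\ \sum_{P\in{\mathcal P}_r} l^{(1)}_{p_1}\cdots l^{(r)}_{p_r}\,{\bf a}_{p_m}\,\Psi^{P_{\widehat m}}_{v,\beta+u}(x). \]
For each fixed $m$, both $\Psi^{P_{\widehat m}}_{v,\beta+u}(x)$ and the product $\prod_{i\ne m} l^{(i)}_{p_i}$ are independent of the entry $p_m$, so the inner sum factors and the sum over $p_m$ becomes $\big(\sum_{p_m=1}^n l^{(m)}_{p_m}{\bf a}_{p_m}\big)={\bf 0}$, the vanishing being exactly the condition $l^{(m)}\in L$. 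Hence every $m$-summand is zero, ${\mathcal E}$ annihilates (5.21), and the theorem follows.
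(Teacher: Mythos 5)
Your proof is correct, but it takes a genuinely different route from the paper's. The paper substitutes the expansion (5.19) of $\Psi^P_{v,\beta+u}$ in terms of the $\Phi^Q_{v,\beta+u}$ into (5.23), interchanges the sums over $P$ and $Q$ (Lemma 5.20 is what makes $\Phi^Q_{v,\beta+u}$ independent of the ambient $P$, so the interchange is legitimate), recognizes the resulting inner sums as $\prod_i \log x^{l^{(i)}}$, and then verifies the Euler operators directly on each term $x^{v+l}\prod_i\log x^{l^{(i)}}$ of the reorganized expression (5.26), the essential point being that $\sum_j a_{\cdot j}l^{(i)}_j={\bf 0}$ because $l^{(i)}\in L$. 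You instead prove a lowering identity for a single series, namely that the Euler operator applied to $\Psi^P_{v,\beta+u}$ equals $\sum_{m=1}^r {\bf a}_{p_m}\Psi^{P_{\widehat{m}}}_{v,\beta+u}$, and let the weighted sum over $P$ collapse via the same relation $\sum_j l^{(m)}_j{\bf a}_j={\bf 0}$. The ingredients are the same --- Lemma 5.11, Proposition 5.13, and the vanishing $M_{l,0}(v)=0$ when $v\in{\mathbb Z}_{\geq 0}$ and $v+l\in{\mathbb Z}_{<0}$ (your ``surplus'' argument is in substance the proof of Lemma 5.20, redeployed) --- and you correctly handle the two delicate points: that the exceptional case of Lemma 4.4 does not depend on $r$, so $f^{(v,\rho-1)}_l$ is given by (4.5) whenever $f^{(v,\rho)}_l$ is, and that when $\rho_P(j)=1$ the index $j$ migrates from $\bar{P}$ to $\bar{P}^{\rm c}$, which is exactly where the hypothesis for sets of size $n-r+1$ is consumed. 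Your route avoids the combinatorial resummation entirely and is arguably cleaner for proving the theorem as stated; what it does not produce is the explicit formula (5.26) for the solution in terms of the $\Phi^Q_{v,\beta+u}$ and products of logarithms, which the paper's proof yields as a byproduct and which is indispensable later (it is the source of (6.3) and of all the explicit computations in Sections 6 and 7).
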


\begin{proof}
The expression (5.23) satisfies the box operators because it is a linear combination of expressions $\Psi^P_{v,\beta+u}(x)$ that satisfy the box operators (Proposition~5.7).  We need to prove that it satisfies the Euler operators with parameter~$\beta+u$.

The hypothesis of the theorem implies that the hypotheses of Proposition 5.13 and Lemma 5.20 are satisfied for all $P\in{\mathcal P}_r$ and all $Q\in S(P,k)$.  Thus the $\Psi^P_{v,\beta+u}(x)$ are given by (5.19) and the $\Phi^Q_{v,\beta+u}(x)$ are given by (5.21).  

Substitute (5.19) into (5.23):
\begin{multline}
\sum_{P=(p_1,\dots,p_r)\in{\mathcal P}_r} l^{(1)}_{p_1}\cdots l^{(r)}_{p_r} \\
\cdot\sum_{k=0}^r \sum_{Q=(p_{j_1},\dots,p_{j_k})\in S(P,k)} \Phi^Q_{v,\beta+u}(x)\frac{\log x_{p_1}\cdots\log x_{p_r}}{{\log x}_{p_{j_1}}\cdots{\log x}_{p_{j_k}}}.
\end{multline}
We can reverse the order of summation because, by Lemma 5.20, $\Phi^Q_{v,\beta+u}(x)$ is independent of the choice of $P$ for which $Q\in S(P,k)$.  For a sequence $Q=(p_{j_1},\dots,p_{j_k})\in{\mathcal P}_k$, the set of all sequences $P=(p_1,\dots,p_r)\in{\mathcal P}_r$ for which $Q\in S(P,k)$ has cardinality $n^{r-k}$: the values $p_j$ for $j\not\in\{{j_1},\dots,{j_k}\}$ can be assigned arbitrarily from $\{1,\dots,n\}$.  Expression (5.24) thus equals
\begin{multline}
\sum_{k=0}^r \sum_{1\leq j_1<\dots<j_k\leq r}\sum_{{p_{j_1}},\dots,{p_{j_k}} = 1}^n l^{(j_1)}_{p_{j_1}}\cdots l^{(j_k)}_{p_{j_k}}\Phi^{(p_{j_1},\dots,p_{j_k})}_{v,\beta+u}(x) \\
\cdot\sum_{p_1,\dots\hat{p}_{j_1},\dots,\hat{p}_{j_k}\dots,p_r=1}^n l^{(1)}_{p_1}\cdots\widehat{l^{(j_1)}_{p_{j_1}}}\cdots\widehat{l^{(j_k)}_{p_{j_k}}}\cdots l^{(r)}_{p_r} \\ \cdot\log x_{p_1}\cdots\widehat{\log x}_{p_{j_1}}\cdots\widehat{\log x}_{p_{j_k}} \cdots\log x_{p_r}.
\end{multline}
Note that the innermost sum in (5.25) equals
\[ \prod_{i=1,\dots,\hat{\jmath}_1,\dots,\hat{\jmath}_k,\dots,r} \log x^{l^{(i)}}, \]
so (5.25) simplifies to 
\begin{equation}
\sum_{k=0}^r \sum_{1\leq j_1<\dots<j_k\leq r}\sum_{{p_{j_1}},\dots,{p_{j_k}} = 1}^n l^{(j_1)}_{p_{j_1}}\cdots l^{(j_k)}_{p_{j_k}}\Phi^{(p_{j_1},\dots,p_{j_k})}_{v,\beta+u}(x) \prod_{\substack{i=1,\dots,r\\ i\neq j_1,\dots,j_k}} \log x^{l^{(i)}}. 
\end{equation}
Since $l\in L(u)$, every monomial $x^{v+l}$ appearing in some $\Phi^Q_{v,\beta+u}(x)$ satisfies the Euler operators with parameter $\beta+u$, which implies by a straightforward calculation that $x^{v+l}\prod_{\substack{i=1,\dots,r\\ i\neq j_1,\dots,j_k}} \log x^{l^{(i)}}$ also satisfies the Euler operators with parameter $\beta+u$.  It follows that the expression (5.26) (and hence (5.23), which it equals) satisfies the Euler operators with parameter $\beta+u$.
\end{proof}

Note that we have obtained two formulas for the solution.  One, in terms of the $\Psi_{v,\beta+u}^P(x)$, is given in (5.23) and the other, in terms of the $\Phi^Q_{v,\beta+u}(x)$, is given in (5.26).  

There is one obvious case where $v$ has minimal $(I,u)$-negative support for all $I\subseteq\{1,\dots,n\}$ and all $u\in{\mathbb Z}A$, namely, the case where $v_i\not\in{\mathbb Z}_{<0}$ for $i=1,\dots,n$.  In Section~7 we show that in the codimension-one case, if $\beta$ is nonresonant, then every $v\in E'_\beta$ satisfies this condition.

\section{Logarithmic series solutions}

We return to the notation of Section 1: Let $A$ and $\beta$ be as in Section~1 and let $v=(v_1,\dots,v_n)\in E'_\beta$.  Fix $r$, $0\leq r<m_v$, and $u\in{\mathbb Z}A$.  We make the assumption that $v$ has minimal $(I,u)$-negative support for all $I\subseteq\{1,\dots,n\}$ with $\lvert I\rvert\geq n-r$.  

The lattice $L$ of relations on $A$ is the set $L=\{z\ell\mid z\in{\mathbb Z}\}$.  We need to describe the sets $L_{v,I}(u)$.  Since $u\in{\mathbb Z}A$, we can choose $\ell^{(u)}\in{\mathbb Z}^n$ such that $\sum_{\sigma=1}^n \ell^{(u)}_\sigma{\bf a}_\sigma = u$.  Then $L(u) = \{\ell^{(u)}+z\ell\mid z\in{\mathbb Z}\}$.  
Every element of $L_{v,I}(u)$ is thus of the form $\ell^{(u)} + z\ell$ for some $z\in{\mathbb Z}$.  Put
\begin{equation}
{\mathbb Z}_{v,I}(u,\ell^{(u)}) = \{z\in{\mathbb Z}\mid \ell^{(u)}+z\ell\in L_{v,I}(u)\},
\end{equation}
i.~e., ${\mathbb Z}_{v,I}(u,\ell^{(u)})$ is the set of all $z\in{\mathbb Z}$ such that
\[ \text{$I$-nsupp}(v+\ell^{(u)}+z\ell) = \text{$I$-nsupp}(v). \]
By Lemma 5.20, Equation~(5.21) gives us the formula for
$\Phi^Q_{v,\beta+u}(x)$:
\begin{multline}
\Phi^Q_{v,\beta+u}(x) = \\ 
x^{v+\ell^{(u)}} \sum_{z\in{\mathbb Z}_{v,\bar{Q}^{\rm c}}(u,\ell^{(u)})} \bigg(\prod_{i=1}^k M_{\ell^{(u)}_i+z\ell_i,\rho_Q(i)}(v_i)\prod_{j=k+1}^nM_{\ell^{(u)}_j-z\ell_j,\rho_Q(j)}(v_j)\bigg)x_0^z.
\end{multline}
Since all elements of $L$ are scalar multiples of $\ell$, all choices of $l^{(k)}$ in Theorem~5.22 must be scalar multiples of $\ell$ also.  Thus all possibilities arising from (5.23) are scalar multiples of the result obtained by choosing $l^{(k)}=\ell$ for $k=0,1,\dots,r$.  Equation (5.26) then gives the formal logarithmic solution of degree $r$ with parameter $\beta+u$ associated to $v\in E'_{\beta}$:
\begin{multline}
\sum_{s=0}^r \sum_{1\leq t_1<\dots<t_s\leq r} \sum_{p_{t_1},\dots,p_{t_s}=1}^n \\ 
\bigg(\prod_{1\leq p_{t_s}\leq k} \ell_{p_{t_s}}\prod_{k+1\leq p_{t_s}\leq n} (-\ell_{p_{t_s}}) \bigg)\Phi_{v,\beta+u}^{(p_{t_1},\dots,p_{t_s})}(x) \log^{r-s}x_0.
\end{multline}

Note that when $r=0$ this expression reduces to $\Phi_{v,\beta+u}^\emptyset(x)$, which is the formal logarithm-free series solution with parameter $\beta+u$ associated to $v\in E'_\beta$.
We make explicit the case $Q=\emptyset$ in (6.2).   From (6.2) we have
\begin{multline}
 \Phi^\emptyset_{v,\beta+u}(x) = \\ 
 x^{v+\ell^{(u)}}\sum_{z\in{\mathbb Z}_{v,\{1,\dots,n\}}(u,\ell^{(u)})} \bigg(\prod_{i=1}^k M_{\ell^{(u)}_i+z\ell_i,0}(v_i) \prod_{j=k+1}^n M_{\ell^{(u)}_i-z\ell_j,0}(v_j)\bigg)x_0^z. 
\end{multline}

{\bf Example.}  By Lemma~3.2 every $v\in E'_\beta$ has minimal $(\{1,\dots,n\},{\bf 0})$-negative support.  By the definition of $E'_\beta$ only the $v_j$ for $j=k+1,\dots,n$ can be negative integers.  Suppose that $v_j\in{\mathbb Z}_{<0}$ for some $j$.  Then for $z\in{\mathbb Z}$, one has $v_j-z\ell_j\in{\mathbb Z}_{\geq 0}$ only if $z<0$.  But then some $v_i+z\ell_i\in{\mathbb Z}_{<0}$ by the definition of $E'_\beta$.  Take $u={\bf 0}$ and $\ell^{(u)}={\bf 0}$ in~(6.1), so ${\mathbb Z}_{v,\{1,\dots,n\}}({\bf 0},{\bf 0})$ is the set of $z\in{\mathbb Z}$ such that
\[ \text{$\{1,\dots,n\}$-nsupp}(v+z\ell) = \text{$\{1,\dots,n\}$-nsupp}(v). \]
Then this argument shows that
\[ 0\in{\mathbb Z}_{v,\{1,\dots,n\}}({\bf 0},{\bf 0})\subseteq {\mathbb Z}_{\geq 0}. \]
Equation (6.4) then gives
\[  \Phi^\emptyset_{v,\beta}(x) = 
 x^{v}\sum_{z\in{\mathbb Z}_{v,\{1,\dots,n\}}({\bf 0},{\bf 0})} \bigg(\prod_{i=1}^k M_{z\ell_i,0}(v_i) \prod_{j=k+1}^n M_{-z\ell_j,0}(v_j)\bigg)x_0^z. \]
 In particular, the coefficient of $x^v$ on the right-hand side is 1, so $\Phi^\emptyset_{v,\beta}(x)\neq 0$.  Thus every $v\in E'_\beta$ gives a nonzero, logarithm-free solution to the $A$-hypergeometric system with parameter $\beta$ which belongs to the ring ${\mathbb C}[[x_0]][x^v]$.  

Our main result is the following theorem.
\begin{theorem}
Suppose that $v\in E'_\beta$ has minimal $(I,u)$-negative support for all $I\subseteq\{1,\dots,n\}$ with $\lvert I\rvert\geq n-r$ and some $r<m_v$.  Then the expression $(6.3)$ is a solution of the $A$-hypergeometric system with parameter $\beta+u$ that lies in the Nilsson ring ${\mathbb C}[[x_0]][x_0^{-1},\log x_0,x^v]$.
\end{theorem}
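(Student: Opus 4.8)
The plan is to treat the two assertions separately: that (6.3) solves the system, and that it lies in the Nilsson ring. The first is essentially an application of the machinery of Section~5, while the second carries the real content.

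For the solution claim, I would observe that the hypothesis of Theorem~6.5 is exactly the hypothesis of Theorem~5.22, namely minimal $(I,u)$-negative support for all $I$ with $\lvert I\rvert\geq n-r$. In the codimension-one case every relation $l^{(k)}$ in (5.23) must be a scalar multiple of $\ell$, so taking $l^{(k)}=\ell$ for all $k$ and rewriting (5.23) via (5.26) produces precisely (6.3), using $\log x^\ell=\log x_0$. Since the hypotheses of Proposition~5.13 and Lemma~5.20 then hold for every $P\in{\mathcal P}_r$ and every $Q\in S(P,k)$, the coefficients $\Phi^Q_{v,\beta+u}$ are given by the explicit formula (6.2) and are well defined: for $i\leq k$ no vanishing denominator occurs in $M_{\bullet,\rho_Q(i)}(v_i)$ because $v\in E'_\beta$ forces $v_i\notin{\mathbb Z}_{<0}$, while for $j>k$ the minimal-support hypothesis routes every occurring factor through case (4.5) of Lemma~4.4. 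Hence (6.3) is a solution of the system with parameter $\beta+u$ by Theorem~5.22.

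For membership in the Nilsson ring I would first record the shape of (6.3). The logarithms enter only through the factors $\log^{r-s}x_0$, and each $\Phi^Q_{v,\beta+u}$ is logarithm-free by (6.2); thus (6.3) is a polynomial in $\log x_0$ of degree $\leq r$. Moreover every monomial occurring in $\Phi^Q_{v,\beta+u}$ has the form $x^{v+\ell^{(u)}}x_0^z=x^{v+\ell^{(u)}+z\ell}$ with $z\in{\mathbb Z}_{v,\bar{Q}^{\rm c}}(u,\ell^{(u)})$. So the only thing left to verify is that the coefficient of each power of $\log x_0$ is a Laurent series in $x_0$ with finitely many negative terms, i.e.\ that ${\mathbb Z}_{v,\bar{Q}^{\rm c}}(u,\ell^{(u)})$ is bounded below for every $Q$ appearing in (6.3).

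This boundedness is the heart of the proof and is where the hypothesis $r<m_v$ is used; it is the step I expect to be the main obstacle, the rest being bookkeeping on the explicit formulas of Sections~4 and~5. Each $Q$ in (6.3) has length $s\leq r$, so its support satisfies $\lvert\bar{Q}\rvert\leq r<m_v=\lvert M_v\rvert$, whence $M_v\not\subseteq\bar{Q}$ and $M_v\cap\bar{Q}^{\rm c}\neq\emptyset$. I would therefore fix an index $i\in M_v\cap\bar{Q}^{\rm c}$; then $i\leq k$ and $v_i\in{\mathbb Z}_{\geq0}$, so $i\notin\bar{Q}^{\rm c}\text{-nsupp}(v)$. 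On the other hand $v_i+\ell^{(u)}_i\in{\mathbb Z}$ and, since $\ell_i>0$, the $i$-th coordinate $v_i+\ell^{(u)}_i+z\ell_i$ of $v+\ell^{(u)}+z\ell$ tends to $-\infty$ as $z\to-\infty$, so for all sufficiently negative $z$ it is a negative integer and $i\in\bar{Q}^{\rm c}\text{-nsupp}(v+\ell^{(u)}+z\ell)$. For such $z$ the two negative supports disagree at $i$, so $z\notin{\mathbb Z}_{v,\bar{Q}^{\rm c}}(u,\ell^{(u)})$. This bounds the index set below and completes the argument, establishing that (6.3) lies in $\mathbb{C}[[x_0]][x_0^{-1},\log x_0,x^{v+\ell^{(u)}}]$, i.e.\ the asserted Nilsson ring with base monomial $x^v$ up to the integer shift $\ell^{(u)}$.
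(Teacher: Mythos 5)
Your proposal is correct and follows essentially the same route as the paper: the solution property is delegated to Theorem~5.22 after specializing all relations to multiples of $\ell$, and the Nilsson-ring claim is reduced to bounding ${\mathbb Z}_{v,\bar{Q}^{\rm c}}(u,\ell^{(u)})$ below by picking $i\in M_v\cap\bar{Q}^{\rm c}$ (which exists precisely because $\lvert\bar{Q}\rvert\leq r<m_v$) and using $v_i\in{\mathbb Z}_{\geq 0}$ together with $\ell_i>0$. This is exactly the paper's argument, stated in contrapositive form.
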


\begin{proof}
We already noted that (6.3) is a formal solution, we just need to check the Nilsson ring assertion.  For that purpose, we need to check that each $\Phi^Q_{v,\beta+u}(x)$, where $Q$ is a sequence of length less than or equal to $r$,  lies in the Nilsson ring.  
Since 
$\lvert \bar{Q}\rvert\leq r<m_v=\lvert I_v\rvert$ there exists $i\in I_v$ such that $i\in\bar{Q}^{\rm c}$.  Since 
$i\in I_v$ we have $v_i\in{\mathbb Z}_{\geq 0}$, and since $\ell^{(u)}+z\ell\in L_{v,\bar{Q}^{\rm c}}(u)$ we have
$v_i+\ell^{(u)}_i+z\ell_i\in{\mathbb Z}_{\geq 0}$ also.  This latter condition can be satisfied for only finitely many $z\in{\mathbb Z}_{<0}$, which implies by (6.2) that $\Phi^Q_{v,\beta+u}(x)\in{\mathbb C}[[x_0]][x_0^{-1},x^v]$.  
\end{proof}

{\bf Remark.}  When the hypothesis of the theorem is satisfied for some~$r$, $0\leq r<m_v$, then it is also satisfied for
all $r'$, $0\leq r'\leq r$.  Thus Equation~(6.3) gives solutions of the $A$-hyper\-ge\-o\-met\-ric system with parameter 
$\beta+u$ which are nominally polynomials of degrees $0,1,\dots,r$ in $\log x_0$ with coefficients in ${\mathbb C}[[x_0]][x_0^{-1},x^v]$.  Note that the coefficient of the highest power of $\log x_0$ in each of these solutions is $\Phi^\emptyset_{v,\beta+u}(x)$.  If this coefficient is nonzero, then Equation~(6.3) gives $r+1$ linearly independent solutions of the $A$-hypergeometric system with parameter~$\beta+u$.

\begin{corollary}
Suppose that each $v\in E'_\beta$ satisfies the hypothesis of Theorem $6.5$ with $r=m_v-1$ and suppose that each
$\Phi^\emptyset_{v,\beta+u}(x)$ is nonzero.  Then the series $(6.3)$ form a set of $\sum_{i=1}^k \ell_i$ linearly independent solutions of the $A$-hypergeometric system with parameter $\beta+u$ that lie in the Nilsson ring ${\mathbb C}[[x_0]][x_0^{-1},\log x_0,\{x^v\}_{v\in E'_\beta}]$.
\end{corollary}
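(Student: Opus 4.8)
The plan is to build the full solution set one $v\in E'_\beta$ at a time and then show that the blocks attached to distinct $v$ cannot interfere. First I would fix $v\in E'_\beta$. Since $v$ satisfies the hypothesis of Theorem~6.5 with $r=m_v-1$, the Remark following that theorem shows it satisfies the same hypothesis for every $r'$ with $0\le r'\le m_v-1$, so Equation~(6.3) produces a solution for each such $r'$. Reading off~(6.3), the solution attached to $r'$ is a polynomial in $\log x_0$ whose coefficient of $\log^{r'}x_0$ (the $s=0$ term, corresponding to $Q=\emptyset$) is $\Phi^\emptyset_{v,\beta+u}(x)$. Because this coefficient is assumed nonzero, each such solution has $\log x_0$-degree exactly $r'$, and the $m_v$ solutions coming from $r'=0,1,\dots,m_v-1$ have distinct $\log x_0$-degrees with a common nonzero leading coefficient. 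A vanishing combination of them would have $\log x_0$-degree equal to its largest active index, with leading coefficient a nonzero scalar times $\Phi^\emptyset_{v,\beta+u}(x)$, forcing all coefficients to vanish; hence these $m_v$ solutions are linearly independent over ${\mathbb C}$. By Theorem~6.5 each lies in ${\mathbb C}[[x_0]][x_0^{-1},\log x_0,x^v]$.

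Next I would let $v$ range over all of $E'_\beta$ and gather the resulting solutions. The total count is $\sum_{v\in E'_\beta}m_v$, which equals $\sum_{i=1}^k\ell_i$ by Equation~(1.4), and every solution lies in the common Nilsson ring ${\mathbb C}[[x_0]][x_0^{-1},\log x_0,\{x^v\}_{v\in E'_\beta}]$. It therefore remains only to prove that the entire collection is linearly independent over ${\mathbb C}$.

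The main obstacle is this cross-$v$ independence, which I would settle by a monomial-support argument. By~(6.2) each $\Phi^Q_{v,\beta+u}(x)$ is $x^{v+\ell^{(u)}}$ times a Laurent series in $x_0$, and since $x_0^z=x^{z\ell}$, every monomial occurring in any solution attached to $v$ lies in the coset $\{x^{v+\ell^{(u)}+z\ell}\mid z\in{\mathbb Z}\}$, multiplied by a power of $\log x_0$. The shift $\ell^{(u)}$ depends only on $u$, so for distinct $v,v'\in E'_\beta$ these cosets coincide if $v-v'\in{\mathbb Z}\ell$ and are otherwise disjoint; but $v-v'\in{\mathbb Z}\ell$ with both $v,v'\in E'_\beta\subseteq E_\beta$ contradicts the uniqueness of the shift $z_0$ in Lemma~3.1 unless $v=v'$. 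Hence distinct $v$ yield disjoint monomial supports, so any vanishing ${\mathbb C}$-linear combination of all the solutions must vanish separately on each $v$-block, whereupon the within-block independence from the first paragraph completes the proof. I expect no further difficulty, since the two key ingredients—the $\log x_0$-degree filtration inside a block and the coset disjointness across blocks—are each already supplied by the Remark and by Lemma~3.1.
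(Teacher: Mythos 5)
Your proposal is correct and follows essentially the same route as the paper: the $m_v$ solutions for a fixed $v$ are independent by the $\log x_0$-degree filtration with nonzero leading coefficient $\Phi^\emptyset_{v,\beta+u}(x)$ (exactly the content of the Remark after Theorem~6.5), and solutions attached to distinct $v$ are independent because their monomials lie in distinct cosets $x^{v}\cdot x_0^{\mathbb Z}$ --- the paper compresses this to ``each contains a factor $x^v$,'' while you justify the disjointness of the cosets via the uniqueness in Lemma~3.1. The only difference is that you spell out details the paper leaves implicit; the count $\sum_{v}m_v=\sum_{i=1}^k\ell_i$ via (1.4) is identical.
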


\begin{proof}
By the above remark each $v\in E'_\beta$ gives $m_v$ linearly independent solutions.  Solutions corresponding to different $v$ are linearly independent because each contains a factor $x^v$.  As a result there is a total of $\sum_{v\in E'_\beta} m_v$ linearly independent solutions, and $\sum_{v\in E'_\beta} m_v=\sum_{i=1}^k \ell_i$ by~(1.4).
 \end{proof}
 
 {\bf Example.} (cf.\ \cite[Example 3.5.3]{SST}, \cite[Examples 3.1, 4.6, and 5.8]{S})  Let $A = \{{\bf a}_\mu\}_{\mu=1}^3\subseteq{\mathbb Z}^2$, where ${\bf a}_1 = (1,0)$, ${\bf a}_2 = (1,2)$, and ${\bf a}_3 = (1,1)$.  We have the relation ${\bf a}_1 + {\bf a}_2-2{\bf a}_3 = {\bf 0}$ and $x_0 = x_1x_2/x_3^2$.  Take $\beta = (10,8)$.  This gives $E_{(10,8)} = \{(2,0,8), (0,-2,12)\}$ and $E'_{(10,8)} = \{(2,0,8)\}$.  The vector $v=(2,0,8)$ has minimal $(I,u)$-negative support for all $I\subseteq\{1,2,3\}$ and all $u\in{\mathbb Z}A = {\mathbb Z}^2$ because none of its entries is a negative integer.  For this example we take $u=(0,0)$, $\ell^{(u)}=(0,0,0)$ and apply Corollary 6.6.  We have $m_{(2,0,8)} = 2$ so we can take $r=0$ to get a logarithm-free solution and $r=1$ to get a log solution, giving $2$ $(=\ell_1+\ell_2={\rm vol}(\Delta(A)))$ solutions for the parameter $\beta = (10,8)$ associated to the vector $v=(2,0,8)$.  
 
 From the definitions (5.9) and (6.1) we get
 \[ L_{(2,0,8),\{1,2,3\}}({\bf 0}) = \{z(1,1,-2)\mid z\in\{0,1,2,3,4\}\}, \]
and ${\mathbb Z}_{(2,0,8),\{1,2,3\}}({\bf 0},{\bf 0}) = \{0,1,2,3,4\}$.  From (6.4), (4.6), (4.7), and~(4.9), the logarithm-free solution is
\begin{multline*}
 \Phi_{(2,0,8),(10,8)}^\emptyset(x) = x_1^2x_3^8\sum_{z=0}^4 M_{z,0}(2)M_{z,0}(0)M_{-2z,0}(8)x_0^z  \\ 
 = x_1^2x_3^8\sum_{z=0}^4 \frac{(-8)_{2z}}{z!(3)_z}x_0^z \\
 = x_1^2x_3^8 + \frac{56}{3} x_1^3x_2x_3^6 + 70 x_1^4x_2^2x_3^4 + 56x_1^5x_2^3x_3^2 + \frac{14}{3} x_1^6x_2^4x_3.
\end{multline*}

To find the log solution we need to compute $\Phi_{(2,0,8),(0,8)}^{\{\mu\}}(x)$ for $\mu=1,2,3$ and apply (6.3).  From (5.9) we have
\begin{align*}
L_{(2,0,8),\{2,3\}}({\bf 0}) &= \{z(1,1,-2)\mid z\in\{0,1,2,3,4\}\}, \\
L_{(2,0,8),\{1,3\}}({\bf 0}) &= \{z(1,1,-2)\mid z\in\{-2,-1,0,1,2,3,4\}\}, \\
L_{(2,0,8),\{1,2\}}({\bf 0}) &= \{z(1,1,-2)\mid z\in{\mathbb Z}_{\geq 0}\},
\end{align*}
so
\begin{align*}
{\mathbb Z}_{(2,0,8),\{2,3\}}({\bf 0},{\bf 0}) &= \{0,1,2,3,4\}, \\
{\mathbb Z}_{(2,0,8),\{1,3\}}({\bf 0},{\bf 0}) &= \{-2,-10,1,2,3,4\}, \\
{\mathbb Z}_{(2,0,8),\{1,2\}}({\bf 0},{\bf 0}) &= {\mathbb Z}_{\geq 0}.
\end{align*}
From (6.2) we then have
\begin{align*}
 \Phi^{\{1\}}_{(2,0,8),(10,8)}(x) &= x_1^2x_3^8\sum_{z=0}^4 M_{z,1}(2)M_{z,0}(0)M_{-2z,0}(8)x_0^z, \\
 \Phi^{\{2\}}_{(2,0,8),(10,8)}(x) &= x_1^2x_3^8\sum_{z=-2}^4 M_{z,0}(2)M_{z,1}(0)M_{-2z,0}(8)x_0^z, \\
 \Phi^{\{3\}}_{(2,0,8),(10,8)}(x) &= x_1^2x_3^8\sum_{z=0}^\infty M_{z,0}(2)M_{z,0}(0)M_{-2z,1}(8)x_0^z.
 \end{align*}
 These sums can be made explicit from the formulas in Section 4.  By (6.3) the log solution is
 \[ \Phi_{(2,0,8),(10,8)}^\emptyset(x) \log x_0 + \Phi^{\{1\}}_{(2,0,8),(10,8)}(x) + \Phi^{\{2\}}_{(2,0,8),(10,8)}(x) -2\Phi^{\{3\}}_{(2,0,8),(10,8)}(x). \]
 
The `initial monomial' (in the sense of \cite{SST}) in this solution is the term corresponding to $z=-2$ in $\Phi^{\{2\}}_{(2,0,8),(10,8)}(x)$.  One computes from the formulas in Section 4 that this monomial is $-(5940)^{-1}x_2^{-2}x_3^{12}$.  This implies that the vector $(0,-2,12)$, which lies in $E_{(10,8)}$ but not in $E'_{(10,8)}$, is an exponent of this system.
  
 {\bf Example.}  Let $A=\{{\bf a}_\mu\}_{\mu=1}^3\subseteq{\mathbb Z}^2$, where ${\bf a}_1 = (1,0)$, ${\bf a}_2 = (0,1)$, and ${\bf a}_3 = (1,1)$.  We have the relation ${\bf a}_1 + {\bf a}_2 - {\bf a}_3 = {\bf 0}$ and $x_0 = x_1x_2/x_3$.  Take $\beta = (0,0)$.  This gives $E'_{(0,0)} = \{ (0,0,0)\}$.  The vector $v=(0,0,0)$ has minimal $(I,u)$-negative support for all $I\subseteq\{1,2,3\}$ and all $u\in{\mathbb Z}A={\mathbb Z}^2$ because none of its entries is a negative integer.  Take $u=(0,0)$, $\ell^{(u)} = (0,0,0)$.  Corollary~6.6 with $r=0,1$ then gives 2 $(=\ell_1 + \ell_2={\rm vol}(\Delta(A)))$ solutions for the parameter $\beta = (0,0)$
 associated to the vector $v=(0,0,0)$.  First of all, $L_{v,\{1,2,3\}}({\bf 0}) = \{(0,0,0)\}$, so ${\mathbb Z}_{v,\{1,2,3\}}({\bf 0},{\bf 0}) = \{0\}$ and from (6.4) and (4.6) we get for $r=0$ that one solution is $\Phi^\emptyset_{v,\beta}(x) =  M_{0,0}(0)^3=1$.  For the case $r=1$, we need to compute $\Phi^{\{\mu\}}_{v,\beta}(x)$ for $\mu=1,2,3$.  For $\mu=1$ we have $L_{v,\{2,3\}}({\bf 0}) = \{(0,0,0)\}$, so ${\mathbb Z}_{v,\{2,3\}}({\bf 0},{\bf 0})=\{0\}$ and from (6.2) and (4.6) we get
\[ \Phi^{\{1\}}_{v,\beta}(x) = M_{0,1}(0)M_{0,0}(0)^2 = 0. \]
One gets $\Phi^{\{2\}}_{v,\beta}(x)=0$ by an identical argument.  For $\mu=3$, on the other hand, we have
$L_{v,\{1,2\}}({\bf 0}) = \{ z\ell\mid z\in{\mathbb Z}_{\geq 0}\}$ so ${\mathbb Z}_{v,\{1,2\}}({\bf 0},{\bf 0}) = {\mathbb Z}_{\geq 0}$.  From (6.2), (4.6), and (4.10) we get
\[ \Phi^{\{3\}}_{v,\beta}(x) = \sum_{z=0}^\infty M_{z,0}(0)^2M_{-z,1}(0)x_0^z = \sum_{z=0}^\infty \frac{(-1)^{z-1}}{z\cdot z!}x_0^z. \]
From (6.3) we get the second solution:
\[ \Phi^\emptyset_{v,\beta}(x)\log x_0 + \Phi^{\{1\}}_{v,\beta}(x) + \Phi^{\{2\}}_{v,\beta}(x) - \Phi^{\{3\}}_{v,\beta}(x) = \log x_0 + \sum_{z=1}^\infty \frac{(-1)^z }{z\cdot z!}x_0^z. \]

If we keep $\beta=(0,0)$ and now take $u=(-1,-1)$, then $\Phi^\emptyset_{v,\beta + u}(x) = 0$ by (6.4), so the hypothesis of Corollary~6.6 is not satisfied and we do not get a full set of solutions at $(-1,-1)$ from choosing $\beta = (0,0)$.  To get the solutions in this case, we need to take $\beta = (-1,-1)$ and compute that $E'_{(-1,-1)} = \{(0,0,-1)\}$.  The vector $v=(0,0,-1)$ has minimal  $(I,{\bf 0})$-negative support for $I\subseteq\{1,2,3\}$ with $\lvert I\rvert\geq 2$.  
Applying Corollary~6.6 to this choice of $v$ then gives two independent solutions, one of them logarithmic, for $\beta = (-1,-1)$.

Now consider the case $\beta=(1,-1)$.  We have $E'_\beta = \{(2,0,-1)\}$, but $v=(2,0,-1)$ does not have $(\{1,3\},{\bf 0})$-minimal negative support.  Thus Theorem~6.5 gives only a single solution, corresponding to $r=0$.  In this case, the second solution comes from the choice $v=(0,-2,1)\in E_\beta$.  This $v$ has minimal $(\{1,2,3\},{\bf 0})$-negative support so gives a formal solution by Theorem~5.22.  Since $v\not\in E'_\beta$, we cannot apply Theorem~6.5 to conclude that this formal solution lies in the Nilsson ring, we need to make a direct calculation from (6.3) and (6.4).  We have
\[ L_{(0,-2,1),\{1,2,3\}}({\bf 0}) = \{ (1,-1,0), (0,-2,1)\} \]
so ${\mathbb Z}_{(0,-2,1),\{1,2,3\}}({\bf 0},{\bf 0}) = \{0,1\}$.  The associated formal solution from (6.3) (with $r=0$) and (6.4) is
\[  \Phi^\emptyset_{(0,-2,1),(1,-1)}(\Lambda) = x_2^{-2}x_3(1-x_0), \]
which clearly lies in the Nilsson ring.

In the next section, we show that these failures to obtain a set of $\sum_{i=1}^k \ell_i$ solutions for all $\beta + u$ from the set $E'_{\beta}$ do not occur when $\beta$ is nonresonant.  The hypothesis of Corollary~6.6 is always satisfied for such $\beta$.

\section{The nonresonant case}
 
 If $\beta$ is nonresonant, then every $v\in E'_\beta$ satisfies the hypothesis of Lemma 2.13, so we have the following result.
\begin{proposition}
Suppose that $\beta$ is nonresonant and that $v\in E'_\beta$.  Then $v_j\not\in{\mathbb Z}$ for $j=k+1,\dots,n$.
\end{proposition}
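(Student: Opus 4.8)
The plan is to deduce this directly from Lemma~2.13, whose hypothesis is that $\beta$ is nonresonant and that the representing vector $v$ has at least one integer coordinate among the first $k$ indices. First I would record the structural fact that $v\in E'_\beta\subseteq E_\beta$, so by the definition of $E_\beta$ we have $\sum_{\mu=1}^n v_\mu{\bf a}_\mu=\beta$; thus $v$ is a representation of $\beta$ of exactly the type that Lemma~2.13 takes as input.

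Next I would verify that $v$ has an integer entry among the coordinates indexed by $\{1,\dots,k\}$. This is precisely the statement that $M_v\neq\emptyset$, which holds for every element of $E_\beta$: writing $v=v^{(i,b)}$ for suitable $i\in\{1,\dots,k\}$ and $b\in\{0,1,\dots,\ell_i-1\}$, the $i$-th coordinate of $v$ equals $b\in{\mathbb Z}_{\geq 0}$, so $i\in M_v$. Hence there exists $i\in\{1,\dots,k\}$ with $v_i\in{\mathbb Z}_{\geq 0}\subseteq{\mathbb Z}$, and the integrality hypothesis of Lemma~2.13 is met.

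With both hypotheses of Lemma~2.13 in hand, I would simply invoke that lemma to conclude $v_j\not\in{\mathbb Z}$ for $j=k+1,\dots,n$, which is the desired assertion. I do not expect any genuine obstacle here: the entire content of the proposition is the observation that the nonemptiness of $M_v$---already built into the definition of $E_\beta$, and hence of $E'_\beta$---supplies exactly the integral-coordinate condition that Lemma~2.13 requires. The only point worth stating with care is that a single integer coordinate among the first $k$ indices suffices to trigger the lemma, and $M_v\neq\emptyset$ guarantees one.
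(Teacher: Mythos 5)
Your argument is correct and is exactly the paper's: the paper proves Proposition~7.1 by noting that every $v\in E'_\beta$ satisfies the hypothesis of Lemma~2.13, and your proposal simply spells out why ($\sum_\mu v_\mu{\bf a}_\mu=\beta$ and $M_v\neq\emptyset$ gives an integer coordinate among the first $k$). No differences worth noting.
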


\begin{theorem}
If $\beta$ is nonresonant, then every $v\in E'_\beta$ satisfies the hypothesis of Corollary $6.6$ for all $u\in{\mathbb Z}A$.
\end{theorem}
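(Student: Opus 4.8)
The plan is to verify, for each fixed $v\in E'_\beta$ and each $u\in{\mathbb Z}A$, the two conditions packaged into Corollary~6.6: that $v$ has minimal $(I,u)$-negative support for all $I\subseteq\{1,\dots,n\}$ with $\lvert I\rvert\geq n-r$ when $r=m_v-1$ (so that Theorem~6.5 applies), and that $\Phi^\emptyset_{v,\beta+u}(x)\neq 0$. The whole argument rests on a single observation: when $\beta$ is nonresonant, \emph{no} coordinate of $v$ is a negative integer. Indeed, $v_i\notin{\mathbb Z}_{<0}$ for $i=1,\dots,k$ by the definition of $E'_\beta$, while Proposition~7.2 gives $v_j\notin{\mathbb Z}$ (hence $v_j\notin{\mathbb Z}_{<0}$) for $j=k+1,\dots,n$. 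By the remark closing Section~5, this at once implies that $v$ has minimal $(I,u)$-negative support for \emph{all} $I\subseteq\{1,\dots,n\}$ and all $u\in{\mathbb Z}A$, which in particular covers the first condition with $r=m_v-1$.

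The remaining task is to show $\Phi^\emptyset_{v,\beta+u}(x)\neq 0$, for which I would work directly from formula~(6.4). Since no coordinate of $v$ is a negative integer, the $\{1,\dots,n\}$-negative support of $v$ is empty, so $z$ lies in the index set ${\mathbb Z}_{v,\{1,\dots,n\}}(u,\ell^{(u)})$ exactly when $v+\ell^{(u)}+z\ell$ has no negative integer coordinate. First I would check this index set is nonempty. For $j>k$ the coordinate $v_j+\ell^{(u)}_j-z\ell_j$ is never an integer because $v_j\notin{\mathbb Z}$; for $i\leq k$ with $v_i\notin{\mathbb Z}$ the $i$-th coordinate is likewise never an integer; and for $i\leq k$ with $v_i\in{\mathbb Z}_{\geq 0}$ the coordinate $v_i+\ell^{(u)}_i+z\ell_i$ is an integer that increases with $z$ (as $\ell_i>0$), hence is $\geq 0$ once $z$ is large. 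Thus for all sufficiently large $z$ the vector $v+\ell^{(u)}+z\ell$ has no negative integer coordinate, so the index set contains arbitrarily large integers.

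It then remains to see that the coefficient of $x_0^z$ in~(6.4) is nonzero for each $z$ in the index set; since distinct $z$ contribute distinct powers of $x_0$, no cancellation can occur and $\Phi^\emptyset_{v,\beta+u}(x)\neq 0$ follows. Here I would use the explicit value~(4.9) of $M_{l,0}$, noting that it is the valid expression throughout because every $v_\mu\notin{\mathbb Z}_{<0}$. A factor $M_{l,0}(v_\mu)$ can vanish only when $v_\mu$ is an integer; so for $\mu$ with $v_\mu\notin{\mathbb Z}$ the factor is automatically nonzero, while for $i\leq k$ with $v_i\in{\mathbb Z}_{\geq 0}$ the factor $M_{\ell^{(u)}_i+z\ell_i,0}(v_i)$ vanishes precisely when $v_i+\ell^{(u)}_i+z\ell_i<0$. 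This is the crux of the proof and the step I expect to require the most care: the vanishing locus of $M_{l,0}$ dovetails exactly with the negative-integer coordinates excluded by the index-set condition, so for $z$ in the index set every factor is nonzero and the entire coefficient is nonzero. This completes the verification of both hypotheses of Corollary~6.6 and hence proves the theorem.
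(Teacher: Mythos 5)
Your proposal is correct and follows essentially the same route as the paper: observe that nonresonance plus the definition of $E'_\beta$ forces every coordinate of $v$ to be a non-negative-integer, so all $(I,u)$-negative supports are empty (hence minimal), and then check term-by-term from (6.4) and (4.9) that the factors $M_{l,0}(v_\mu)$ are nonzero exactly on the index set, so $\Phi^\emptyset_{v,\beta+u}(x)\neq 0$. (The only quibble is a reference slip: the nonintegrality of $v_j$ for $j>k$ is Proposition~7.1, not 7.2.)
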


\begin{proof}
It follows from Proposition 7.1 and the definition of $E'_\beta$ that for nonresonant $\beta$ no coordinate of any $v\in E'_\beta$ is a negative integer, hence for all $I\subseteq\{1,\dots,n\}$ and all $u\in{\mathbb Z}A$ the $(I,u)$-negative support of $v$ is minimal because it equals the emptyset.  To apply Corollary~6.6 to get a full set of solutions with parameter $\beta+u$, we need to check that $\Phi^\emptyset_{v,\beta+u}(x)\neq 0$.  

For $Q=\emptyset$ we have $\bar{Q}^{\rm c} = \{1,\dots,n\}$, so the sum in (6.4) is over those $z\in{\mathbb Z}$ for which no coordinate of $v+\ell^{(u)}+z\ell$ is a negative integer.  By Proposition~7.1 and the definition of $E'_\beta$, the only integer coordinates of $v+\ell^{(u)}+z\ell$ are those indexed by $I_v$.  Thus the sum in (6.4) is over those $z\in{\mathbb Z}$ for which
\[ z\geq z_0:=\max\{-(v_\mu+\ell^{(u)}_\mu)/\ell_\mu \mid \mu\in I_v\}. \]
We get
\begin{equation}
\Phi^\emptyset_{v,\beta+u}(x) = x^{v+\ell^{(u)}}\sum_{z\geq z_0} \bigg(\prod_{i=1}^k M_{\ell^{(u)}_i + z\ell_i,0}(v_i)\prod_{j=k+1}^n M_{\ell^{(u)}_j - z\ell_j,0}(v_j)\bigg)x_0^z.
\end{equation}
Since the $v_\mu$ for $\mu\not\in I_v$ are all nonintegral, the corresponding factors on the right-hand side of (7.3) are all nonzero.  For $\mu\in I_v$, the $v_\mu$ lie in ${\mathbb Z}_{\geq 0}$ and the factors $M_{\ell^{(u)}_\mu + z\ell_\mu,0}(v_\mu)$ are nonzero for $z\geq z_0$.  Thus all terms on the right-hand side of (7.3) are nonzero.
\end{proof}

For $\beta$ nonresonant we thus get $\sum_{i=1}^k \ell_i$ linearly independent logarithmic series solutions at the origin for every parameter $\beta+u$.

We illustrate by applying our results to Gauss' hypergeometric equation 
\[ x(1-x)y'' + (\sigma - (1+\theta_1+\theta_2)x)y' -\theta_1\theta_2 y = 0. \]
For generic values of the parameters $\theta_1,\theta_2,\sigma$, the holomorphic solution at the origin is given by the series
\[ {}_2F_1(\theta_1,\theta_2;\sigma;x) = \sum_{z=0}^{\infty} \frac{(\theta_1)_z(\theta_2)_z}{(\sigma)_z z!}x^z. \]

The $A$-hypergeometric analogue of this equation is obtained by taking
\[ {\bf a}_1 = (1,1,-1),\ {\bf a}_2 = (0,0,1),\ {\bf a}_3 = (1,0,0), \ {\bf a}_4 = (0,1,0) \]  
and taking $\beta = (-\theta_1,-\theta_2,\sigma-1)$.  (A convenient dictionary for translating between classical hypergeometric series and their $A$-hypergeometric counterparts is given in Dwork and Loeser \cite[Appendix]{DL}.)  We have ${\mathbb Z}A = {\mathbb Z}^3$ and the lattice of relations is generated by the equation
\begin{equation}
{\bf a}_1 + {\bf a}_2 - {\bf a}_3 -{\bf a}_4 = {\bf 0},
\end{equation}
so $x_0 = (x_1x_2)/(x_3x_4)$ and $\ell = (1,1,-1,-1)$.
The polytope $\Delta(A)$ has four facets containing the origin, lying in the planes $x_1=0$, $x_2=0$, $x_1+x_3=0$, and $x_2+x_3=0$.  The condition that $\beta$ be nonresonant thus means that $\theta_1$, $\theta_2$, $\theta_1-\sigma$, $\theta_2-\sigma$ are not integers.  We have $\ell_1+\ell_2 = 2\  (= {\rm vol}(A))$, so there will be 2 independent solutions.  

There are, in general, two elements $v^{(1,0)},v^{(2,0)}\in E_\beta$ since $k=2$ and $\ell_1=\ell_2=1$.  To compute $v^{(1,0)}$ we solve
\[ (-\theta_1,-\theta_2,\sigma-1) = 0{\bf a}_1 + \sum_{r=2}^4 c_2{\bf a}_r \]
for $c_2,c_3,c_4$ to get
\begin{equation}
v^{(1,0)} = (0,\sigma-1,-\theta_1,-\theta_2).
\end{equation}
A similar calculation shows that
\begin{equation}
v^{(2,0)} = (1-\sigma,0,\sigma-\theta_1-1,\sigma-\theta_2-1).
\end{equation}
When $\sigma=1$ we have $v^{(1,0)} = v^{(2,0)}$, so $E_\beta$ is a singleton in that case.

There are thus three possibilities for $E'_\beta$.  If $\sigma\not\in{\mathbb Z}$, then
\begin{equation}
E'_\beta=\{v^{(1,0)},v^{(2,0)}\}.
\end{equation}
If $\sigma\in{\mathbb Z}_{\geq 1}$, then
\begin{equation}
E'_\beta = \{v^{(1,0)}\}.
\end{equation}
If $\sigma\in{\mathbb Z}_{\leq 1}$, then
\begin{equation}
E'_\beta = \{v^{(2,0)}\}.
\end{equation}
(Note that if $\sigma = 1$, then $v^{(1,0)} = v^{(2,0)}$.)

When $\sigma\not\in{\mathbb Z}$, we get the two logarithm-free solutions by substituting (7.5) and (7.6) into (6.4):
\begin{equation}
\Phi^\emptyset_{v^{(1,0)},\beta}(x) = x_2^{\sigma-1}x_3^{-\theta_1}x_4^{-\theta_2} \sum_{z=0}^\infty \frac{(\theta_1)_z(\theta_2)_z}{(\sigma)_z z!} x^z_0
\end{equation}
and
\begin{equation}
 \Phi_{v^{(2,0)},\beta}^\emptyset(x) = x_2^{\sigma-1}x_3^{-\theta_1}x_4^{-\theta_2}x_0^{1-\sigma}\sum_{z=0}^\infty \frac{(\theta_1-\sigma+1)_z(\theta_2-\sigma+1)_z}{(2-\sigma)_z z!}x_0^z.
 \end{equation}
 
When $\sigma\in{\mathbb Z}$ we are in the situation of (7.8) or (7.9).  We find the solutions in the case $\sigma\in{\mathbb Z}_{\geq 1}$, the calculation of solutions in the other case is similar.  In this case $v^{(1,0)}$ is given by (7.5), so the logarithm-free solution is given by (7.10).  By (6.3) the other solution is
 \begin{equation}
 \Phi_{v^{(1,0)},\beta}^\emptyset(x)\log x_0 + \Phi^{\{1\}}_{v^{(1,0)},\beta}(x)+\Phi^{\{2\}}_{v^{(1,0)},\beta}(x)-\Phi^{\{3\}}_{v^{(1,0)},\beta}(x)-\Phi^{\{4\}}_{v^{(1,0)},\beta}(x). 
 \end{equation}
We have $\Phi^\emptyset_{v^{(1,0)},\beta}(x)$ from (7.10), so it remains to compute $\Phi^{\{p\}}_{v^{(1,0)},\beta}(x)$ for $p=1,2,3,4$.  By (6.2) we have
\begin{align*}
\Phi^{\{1\}}_{v^{(1,0)},\beta}(x) &= x^{v^{(1,0)}}\sum_{z\in{\mathbb Z}} M_{z,1}(0)M_{z,0}(\sigma-1)M_{-z,0}(-\theta_1)M_{-z,0}(-\theta_2)x_0^z \\
\Phi^{\{2\}}_{v^{(1,0)},\beta}(x) &= x^{v^{(1,0)}}\sum_{z\in{\mathbb Z}} M_{z,0}(0)M_{z,1}(\sigma-1)M_{-z,0}(-\theta_1)M_{-z,0}(-\theta_2)x_0^z \\
\Phi^{\{3\}}_{v^{(1,0)},\beta}(x) &= x^{v^{(1,0)}}\sum_{z\in{\mathbb Z}} M_{z,0}(0)M_{z,0}(\sigma-1)M_{-z,1}(-\theta_1)M_{-z,0}(-\theta_2)x_0^z \\
\Phi^{\{4\}}_{v^{(1,0)},\beta}(x) &= x^{v^{(1,0)}}\sum_{z\in{\mathbb Z}} M_{z,0}(0)M_{z,0}(\sigma-1)M_{-z,0}(-\theta_1)M_{-z,1}(-\theta_2)x_0^z.
\end{align*}

We now apply (4.6)--(4.11).   We have $M_{z,0}(\sigma-1) = 0$ for $z\leq -\sigma$ by (4.9) and $M_{z,1}(0)=0$ for $z=0$ by (4.6), so
\begin{multline}
\Phi^{\{1\}}_{v^{(1,0)},\beta}(x) = - x_2^{\sigma-1}x_3^{-\theta_1}x_4^{-\theta_2}\\  
\cdot\bigg(\sum_{z=-\sigma+1}^{-1} \frac{(-z-1)!(1-\sigma)_{-z}}{(1-\theta_1)_{-z}(1-\theta_2)_{-z}}x_0^z 
+ \sum_{z=1}^\infty \frac{(\theta_1)_z(\theta_2)_z}{(\sigma)_zz!}\bigg(\sum_{s=0}^{z-1} \frac{1}{1+s}\bigg)x_0^z\bigg).
\end{multline}
We have $M_{z,0}(0) = 0$ for $z<0$ by (4.9) and $M_{z,1}(\sigma-1)=0$ for $z=0$ by (4.6), so
\begin{equation}
\Phi^{\{2\}}_{v^{(1,0)},\beta}(x) = -x_2^{\sigma-1}x_3^{-\theta_1}x_4^{-\theta_2}\sum_{z=1}^\infty 
\frac{(\theta_1)_z(\theta_2)_z}{(\sigma)_zz!}\bigg(\sum_{s=0}^{z-1} \frac{1}{\sigma+s}\bigg)x_0^z.
\end{equation}
Again, $M_{z,0}(0)=0$ for $z<0$ and $M_{-z,1}(-\theta_1) = 0$ for $z=0$ by (4.6), so
\begin{equation}
\Phi^{\{3\}}_{v^{(1,0)},\beta}(x) = -x_2^{\sigma-1}x_3^{-\theta_1}x_4^{-\theta_2}\sum_{z=1}^\infty 
\frac{(\theta_1)_z(\theta_2)_z}{(\sigma)_zz!}\bigg(\sum_{s=0}^{z-1}\frac{1}{\theta_1+s}\bigg)x_0^z.
\end{equation}
Similarly,
\begin{equation}
\Phi^{\{4\}}_{v^{(1,0)},\beta}(x) = -x_2^{\sigma-1}x_3^{-\theta_1}x_4^{-\theta_2}\sum_{z=1}^\infty 
\frac{(\theta_1)_z(\theta_2)_z}{(\sigma)_zz!}\bigg(\sum_{s=0}^{z-1}\frac{1}{\theta_2+s}\bigg)x_0^z.
\end{equation}

Substitution into (7.12) now gives the log solution when $\sigma\in{\mathbb Z}_{\geq 1}$:
\begin{multline}
x_2^{\sigma-1}x_3^{-\theta_1}x_4^{-\theta_2} \bigg(\sum_{z=0}^\infty \frac{(\theta_1)_z(\theta_2)_z}{(\sigma)_z z!} x^z_0\log x_0 \\ 
-\sum_{z=-\sigma+1}^{-1} \frac{(-z-1)!(1-\sigma)_{-z}}{(1-\theta_1)_{-z}(1-\theta_2)_{-z}}x_0^z  \\
+ \sum_{z=1}^\infty \frac{(\theta_1)_z(\theta_2)_z}{(\sigma)_zz!}\bigg(\sum_{s=0}^{z-1} \frac{1}{\theta_1+s} + \frac{1}{\theta_2+s} -\frac{1}{\sigma+s} -\frac{1}{1+s}\bigg)x_0^z\bigg).
\end{multline}
The initial monomial in (7.17) is the term corresponding to $z=-\sigma+1$ in the second summation.  This shows that $v^{(2,0)}$, which lies in $E_\beta$ but not in $E'_\beta$, is an exponent.

\section{Relation between $E'_\beta$ and $E'_\gamma$ when $\beta\equiv\gamma\pmod{{\mathbb Z}A}$}

Suppose that $\beta$ is nonresonant and $v\in E'_\beta$.  By Theorem~7.2 we get a full set of logarithmic series solutions for a parameter $\beta+u$ with $u\in{\mathbb Z}A$ from Equation~(6.3).  Fix $u\in{\mathbb Z}A$ and let $\gamma = \beta + u$.  Since $\gamma$ is also nonresonant, applying Theorem~7.2 to $v'\in E'_\gamma$ also gives a full set of logarithmic series solutions with parameter $\gamma = \beta+u$ that belong to the Nilsson ring.  The purpose of this section is to describe a relation between $E'_\beta$  and $E'_\gamma$ that can be used to make explicit the relation between the two sets of solutions with parameter $\gamma = \beta+u$, one that comes from $E'_\beta$ and one that comes from $E'_\gamma$.  Recall that for $v\in E'_\beta$ we put $M_v = \{i\in\{1,\dots,k\}\mid v_i\in{\mathbb Z}_{\geq 0}\}$.

\begin{proposition}
Suppose that $\beta$ is nonresonant and that $\gamma = \beta + u$ with $u\in{\mathbb Z}A$.  Let $v\in E'_\beta$.
There exists a unique $v'\in E'_\gamma$ such that $v'-v\in{\mathbb Z}^n$.  In particular, $M_v = M_{v'}$.  
\end{proposition}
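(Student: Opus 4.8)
The plan is to build $v'$ from $v$ by a single integral translate and then appeal to Lemma~3.1; uniqueness and the equality $M_v=M_{v'}$ will then be formal consequences of $L={\mathbb Z}\ell$ and the sign conditions defining $E'_\gamma$. As a preliminary I note that $\gamma=\beta+u$ is again nonresonant, since $h_{ij}(\gamma)=h_{ij}(\beta)+h_{ij}(u)$ with $h_{ij}(u)\in{\mathbb Z}$ (because $h_{ij}({\mathbb Z}A)={\mathbb Z}$), so $h_{ij}(\gamma)\notin{\mathbb Z}$; this keeps the whole setup symmetric in $\beta$ and $\gamma$.

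For existence I would choose $\ell^{(u)}\in{\mathbb Z}^n$ with $\sum_\sigma\ell^{(u)}_\sigma{\bf a}_\sigma=u$ and set $w=v+\ell^{(u)}$, so that $\sum_\mu w_\mu{\bf a}_\mu=\gamma$ and $w-v=\ell^{(u)}\in{\mathbb Z}^n$. The essential step is to move $w$ into $E_\gamma$ so that Lemma~3.1 can be applied. Since $M_v\neq\emptyset$, I pick $i\in M_v$; then $v_i\in{\mathbb Z}_{\geq 0}$, hence $w_i\in{\mathbb Z}$, and there is a unique $z\in{\mathbb Z}$ with $w_i+z\ell_i\in\{0,1,\dots,\ell_i-1\}$. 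For this $z$ the vector $w+z\ell$ has $\sum_\mu(w+z\ell)_\mu{\bf a}_\mu=\gamma$ and $i$-th coordinate in $\{0,\dots,\ell_i-1\}$, so it equals some $v^{(i,b)}\in E_\gamma$. Lemma~3.1 then furnishes a unique $z_0$ with $v':=w+(z+z_0)\ell\in E'_\gamma$, and $v'-v=\ell^{(u)}+(z+z_0)\ell\in{\mathbb Z}^n$, as required.

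For uniqueness, suppose $v',v''\in E'_\gamma$ each differ from $v$ by a vector in ${\mathbb Z}^n$. Then $v'-v''\in{\mathbb Z}^n$ and $\sum_\mu(v'_\mu-v''_\mu){\bf a}_\mu={\bf 0}$, so $v'-v''$ lies in the relation lattice $L={\mathbb Z}\ell$; write $v''=v'+c\ell$ with $c\in{\mathbb Z}$. Since $v'\in E_\gamma$ and both $v'$ and $v'+c\ell=v''$ lie in $E'_\gamma$, the uniqueness in Lemma~3.1 (already satisfied by $c=0$) forces $c=0$, hence $v''=v'$.

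For the final assertion, fix $i\in\{1,\dots,k\}$. Membership $v\in E'_\beta$ gives $v_i\notin{\mathbb Z}_{<0}$, so $i\in M_v$ exactly when $v_i\in{\mathbb Z}$; similarly $v'\in E'_\gamma$ gives that $i\in M_{v'}$ exactly when $v'_i\in{\mathbb Z}$. As $v'_i-v_i\in{\mathbb Z}$, integrality of $v_i$ and of $v'_i$ are equivalent, so $M_v=M_{v'}$. I expect the only real obstacle to be the existence step: one must use a coordinate indexed by $M_v$ to bring $w$ into $E_\gamma$, since otherwise Lemma~3.1 does not apply; everything after that is bookkeeping with $L={\mathbb Z}\ell$ and the defining inequalities of $E'$.
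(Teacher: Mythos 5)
Your proof is correct and follows essentially the same route as the paper: translate $v$ by $\ell^{(u)}$, shift by a multiple of $\ell$ using a coordinate in $M_v$ to land in $E_\gamma$, and then invoke Lemma~3.1. You additionally spell out the uniqueness argument (via $L={\mathbb Z}\ell$ and the uniqueness in Lemma~3.1) and the equality $M_v=M_{v'}$, which the paper leaves implicit; both of those details are handled correctly.
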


\begin{proof}
Write $v=(v_1,\dots,v_n)$.  We have $v_i\in{\mathbb Z}_{\geq 0}$ for some $i\in\{1,\dots,k\}$.  To fix ideas, suppose that $v_1\in{\mathbb Z}_{\geq 0}$.  Write $u=\sum_{\sigma=1}^n \ell^{(u)}_\sigma{\bf a}_\sigma$ with $\ell^{(u)}_\sigma\in{\mathbb Z}$.  There is a unique $v'=(v'_1,\dots,v'_n)\in E_\gamma$ such that $v'_1\in\{0,1,\dots,\ell_1-1\}$ and $v_1'\equiv v_1+\ell^{(u)}_1\pmod{\ell_1}$.  By Lemma~3.1 there exists a unique $z_0\in{\mathbb Z}$ such that $\tilde{v}':=v' + z_0\ell$ lies in $E'_\gamma$.  Choose $z\in{\mathbb Z}$ such that $\tilde{v}'_1=v_1+\ell^{(u)}_1 +z\ell_1$.  We then have
\[ {\bf 0}=\gamma - \beta-u-z\ell = \sum_{i=2}^k (\tilde{v}'_i-v_i-\ell^{(u)}_i-z\ell_i){\bf a}_i + \sum_{j=k+1}^n (\tilde{v}_j' -v_j-\ell^{(u)}_j+z\ell_j){\bf a}_j. \]
But $A\setminus\{{\bf a}_1\}$ is a linearly independent set, so $\tilde{v}'_i-v_i-\ell^{(u)}_i-z\ell_i=0$ for $i=1,\dots,k$ and $\tilde{v}_j' -v_j-\ell^{(u)}_j+z\ell_j=0$ for $j=k+1,\dots,n$.  This shows that $\tilde{v}'-v=\ell^{(u)} +z\ell\in{\mathbb Z}^n$.
\end{proof}

Fix $v\in E'_\beta$, $u\in{\mathbb Z}A$, and $\gamma = \beta+u$.  By Proposition~8.1 there exists a  unique $v'\in E'_\gamma$ such that $v'-v =: \ell^{(u)}\in{\mathbb Z}^n$
with $u=\sum_{\sigma=1}^n \ell^{(u)}_\sigma{\bf a}_\sigma$.   
As an example, we give the connection between the two logarithm-free solutions $\Phi^\emptyset_{v,\beta+u}(\Lambda)$ and~$\Phi^\emptyset_{v',\gamma}(\Lambda)$ of the $A$-hypergeometric system with parameter $\gamma = \beta+u$.

The formula for $\Phi^\emptyset_{v,\beta+u}(\Lambda)$ is given by (7.3).  
To get the formula for $\Phi^\emptyset_{v',\gamma}(\Lambda)$, we apply (7.3) with $v$ replaced by $v'$, $\beta$ replaced by $\gamma$, and $u$ replaced by ${\bf 0}$, so that we can take the $\ell^{(u)}$ in that formula to be ${\bf 0}$.  We then have
\[ z_0 = \max\{ -v'_\mu/\ell_\mu\mid \mu\in M_v\}. \]
We have $-v'_\mu/\ell_\mu\leq 0$ for all $\mu\in M_v$, but since $v'\in E'_\gamma$  we have $v'_\mu<\ell_\mu$ for some $\mu\in M_v$, i.~e., $-v'_\mu/\ell_\mu>-1$ for that $\mu$, so $z_0\in(-1,0]$.  We thus get
\begin{equation}
\Phi^\emptyset_{v',\gamma}(x) = x^{v'}\sum_{z=0}^\infty \bigg(\prod_{i=1}^k M_{z\ell_i,0}(v'_i) \prod_{j=k+1}^n M_{-z\ell_j,0}(v'_j)\bigg)x_0^z.
\end{equation}
Multiplying by $\prod_{i=1}^k M_{\ell^{(u)}_i,0}(v_i)\prod_{j=k+1}^n M_{\ell^{(u)}_j,0}(v_j)$ this becomes
\begin{multline}
\bigg(\prod_{i=1}^k M_{\ell^{(u)}_i,0}(v_i)\prod_{j=k+1}^n M_{\ell^{(u)}_j,0}(v_j)\bigg)\Phi^\emptyset_{v',\gamma}(x) = \\ 
x^{v'}\sum_{z=0}^\infty \bigg(\prod_{i=1}^k M_{\ell^{(u)}_i,0}(v_i)M_{z\ell_i,0}(v'_i) \prod_{j=k+1}^n M_{\ell^{(u)}_j,0}(v_j)(M_{-z\ell_j,0}(v'_j)\bigg)x_0^z.
\end{multline}
Since $v_\sigma' = v_\sigma+\ell^{(u)}_\sigma$ for $\sigma=1,\dots,n$, the right-hand side can be simplified using the easily verified relation
\[ M_{a+b,0}(c) = M_{a,0}(c)M_{b,0}(a+c), \]
valid for all $a,b\in{\mathbb Z}$ provided $c$ and $a+c$ are not negative integers:
\begin{multline}
\bigg(\prod_{i=1}^k M_{\ell^{(u)}_i,0}(v_i)\prod_{j=k+1}^n M_{\ell^{(u)}_j,0}(v_j)\bigg)\Phi^\emptyset_{v',\gamma}(x) = \\ 
x^{v'}\sum_{z=0}^\infty \bigg(\prod_{i=1}^k M_{\ell^{(u)}_i+z\ell_i,0}(v_i) \prod_{j=k+1}^n M_{\ell^{(u)}_j-z\ell_j,0}(v_j)\bigg)x_0^z.
\end{multline}

To prove that (8.4) equals (7.3), we need to show that the summands on the right-hand side of (7.3) vanish when $z<0$.  For that, it suffices to show there exists $i\in\{1,\dots,k\}$ such that $M_{\ell^{(u)}_i+z\ell_i,0}(v_i) = 0$ for all $z<0$.  Since $v'\in E'_\gamma$, there is an $i\in\{1,\dots,k\}$ such that $v'_i-\ell_i\in{\mathbb Z}_{<0}$.  But $v'_i = v_i+\ell^{(u)}_i$, so $v_i+\ell^{(u)}_i+z\ell_i\in{\mathbb Z}_{<0}$ for all $z<0$.  Since $v_i\in{\mathbb Z}_{\geq 0}$, this implies that $M_{\ell^{(u)}_i+z\ell_i,0}(v_i) = 0$ by (4.9).  We conclude that
\begin{equation}
\Phi^\emptyset_{v,\beta+u}(x) = \bigg(\prod_{i=1}^k M_{\ell^{(u)}_i,0}(v_i)\prod_{j=k+1}^n M_{\ell^{(u)}_j,0}(v_j)\bigg)\Phi^\emptyset_{v',\gamma}(x),
\end{equation}
where $\ell^{(u)}=v'-v$.

\section{Maximal unipotent monodromy}

In this section we suppose that (1.3) holds, so $x_0=0$ is a regular singular point, and we investigate when there is ``maximal unipotent monodromy.''  By ``maximal unipotent monodromy'' in the classical case of an ODE with regular singularity at the origin, we mean that the monodromy about the origin is represented by a single unipotent Jordan block.  In the $A$-hypergeometric case, we interpret that to mean that there are log solutions involving powers $\log^\mu x_0$ for $\mu=0,1,\dots,{\rm vol}(A)-1$ ($=\sum_{i=1}^k \ell_i-1$, since we are assuming~(1.3)) and that the coefficients of the powers of $\log x_0$ are Nilsson series involving only integral powers of $x_0$.  

In the case of what Katz \cite{K} defines as ``generalized hypergeometric equations,'' one has maximal unipotent monodromy at the origin for an irreducible equation if and only if all exponents there are integers (see \cite[Chapter 3]{K}).  In the $A$-hypergeometric situation, maximal unipotent monodromy occurs exactly when the set $E'_\beta$ is a singleton (Theorem~9.1), one does not need any additional conditions on the nature of the unique element of $E'_\beta$.

In the classical case, some authors define maximal unipotent monodromy at the origin to mean that all exponents there equal 0 (see Almkvist and Zudilin \cite{AZ}).  This implies that the coefficient of the highest power of $\log$ is holomorphic and nonvanishing at the origin and that the coefficients of lower powers of $\log$ are holomorphic at the origin.  For this reason, we also determine when the Nilsson series coefficients of powers of $\log x_0$ contain only nonnegative integral powers of~$x_0$.  

We assume that the parameter $\beta$ is nonresonant, so by Theorem~7.2 the hypothesis of Corollary 6.6 is satisfied and there are $\sum_{i=1}^k \ell_i$ linearly independent solutions of the $A$-hypergeometric system with parameter $\beta+u$.  The largest power of $\log x_0$ that can appear in any of these solutions is 
$\sum_{i=1}^k \ell_i-1$ and that happens exactly when $E'_\beta$ is a singleton.  That establishes the ``only if'' direction of the following equivalence.
\begin{theorem}
Suppose that $(1.3)$ holds and that $\beta$ is nonresonant.  The $A$-hypergeometric system with parameter $\beta$ has maximal unipotent monodromy at the origin if and only if $E_\beta'$ is a singleton.
\end{theorem}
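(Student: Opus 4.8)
The plan is to prove the two implications of the equivalence separately. The ``only if'' direction is essentially in hand from the discussion preceding the theorem: if the system has maximal unipotent monodromy then, by the definition adopted in this section, a term $\log^{\sum_{i=1}^k\ell_i-1}x_0$ must occur among the solutions. Since the construction of Section~6 produces, for each $v\in E'_\beta$, solutions that are polynomials in $\log x_0$ of degree at most $m_v-1$, and since these solutions form a basis for nonresonant $\beta$ by Corollary~6.6, the largest logarithmic degree available in the full solution space is $\max_{v\in E'_\beta}m_v-1$. For this to reach $\sum_{i=1}^k\ell_i-1$ we need $\max_{v\in E'_\beta}m_v=\sum_{i=1}^k\ell_i$, and since $\sum_{v\in E'_\beta}m_v=\sum_{i=1}^k\ell_i$ by (1.4) with every $m_v\geq 1$, this forces $E'_\beta$ to be a singleton. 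Thus the real work is the converse.

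For the ``if'' direction I would assume $E'_\beta=\{v\}$, so that $m_v=\sum_{i=1}^k\ell_i$ by (1.4). Because $\beta$ is nonresonant, Theorem~7.2 shows $v$ satisfies the hypothesis of Corollary~6.6, so Equation~(6.3) furnishes $m_v=\sum_{i=1}^k\ell_i$ linearly independent solutions with parameter $\beta$, one for each $r=0,1,\dots,m_v-1$. The first requirement in the definition of maximal unipotent monodromy is then immediate: the solution with $r=m_v-1$ is a polynomial of degree $\sum_{i=1}^k\ell_i-1$ in $\log x_0$ whose top coefficient is $\Phi^\emptyset_{v,\beta}(x)$, and this series is nonzero by the proof of Theorem~7.2, so the powers $\log^\mu x_0$ for $\mu=0,1,\dots,\sum_{i=1}^k\ell_i-1$ all genuinely occur.

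The substance of the argument will be the second requirement, that the coefficients of the powers of $\log x_0$ involve only integral powers of $x_0$, and this is the step where the singleton hypothesis is decisive. Setting $u=\mathbf 0$ and $\ell^{(u)}=\mathbf 0$ in Equation~(6.2), each building block $\Phi^Q_{v,\beta}(x)$ equals $x^v$ times a Laurent series in $x_0$ with integer exponents. Because $E'_\beta$ has the single element $v$, every coefficient of a power of $\log x_0$ in (6.3) is a ${\mathbb C}$-linear combination of such $\Phi^Q_{v,\beta}(x)$, hence lies in $x^v\,{\mathbb C}[[x_0]][x_0^{-1}]$; relative to the common monomial $x^v$ it involves only integral powers of $x_0$, as required. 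This completes the ``if'' direction.

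I expect the only genuine subtlety to be conceptual rather than computational, namely isolating exactly why the singleton hypothesis, and not merely the appearance of the top logarithmic power, is what secures the integrality in the second requirement. The explanation to stress is that any two distinct $v,v'\in E'_\beta$ differ by $v'-v=c\ell$ with $c\notin{\mathbb Z}$ (by the uniqueness in Lemma~3.1, since $c\in{\mathbb Z}$ would put two distinct integer shifts of $v$ in $E'_\beta$), so that $x^{v'}=x^vx_0^{c}$ contributes a nonintegral power of $x_0$, exactly as in the Gauss example with $\sigma\notin{\mathbb Z}$; the collapse of $E'_\beta$ to one point is precisely what eliminates this and leaves the single common factor $x^v$.
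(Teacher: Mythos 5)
Your overall architecture matches the paper's: the ``only if'' direction by comparing the maximal logarithmic degree $\max_{v\in E'_\beta}m_v-1$ of the basis from Corollary~6.6 with $\sum_{i=1}^k\ell_i-1$ and invoking (1.4), and the ``if'' direction by producing the full tower of $\log$ powers from the single $v$ via Theorem~7.2 and the nonvanishing of $\Phi^\emptyset_{v,\beta}(x)$. That part is fine.

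The gap is in your treatment of the second requirement in the definition of maximal unipotent monodromy, namely that the coefficients of the powers of $\log x_0$ involve only integral powers of $x_0$. You argue that each coefficient lies in $x^v\,{\mathbb C}[[x_0]][x_0^{-1}]$ and is therefore integral ``relative to the common monomial $x^v$.'' But that statement is true of the solutions attached to \emph{any} $v\in E'_\beta$, singleton or not (your appeal to the singleton hypothesis at that point does no work: formula (6.3) for a fixed $v$ always has coefficients built from the $\Phi^Q_{v,\beta}(x)$ for that same $v$), so as written it is vacuous --- it would equally ``verify'' integrality for each of the two Gauss solutions (7.10) and (7.11) with $\sigma\notin{\mathbb Z}$. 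The actual content of the requirement is that the prefactor $x^v$ itself carries no fractional power of $x_0$, i.e.\ that the local monodromy acts trivially on $x^v$ and not merely by a scalar. Your closing paragraph shows you see that a monomial can hide a power $x_0^c$ with $c\notin{\mathbb Z}$, but you never rule this out for the single $x^v$. The missing ingredient is Proposition~9.2 (equivalently (1.4)): if $E'_\beta=\{v\}$ then $m_v=\sum_{i=1}^k\ell_i=k$ forces $\ell_i=1$ for all $i\le k$, hence $E_\beta=\{v^{(i,0)}\}_{i=1}^k$ and the unique element $v$ has $v_{i^*}=0$ for some $i^*\le k$; since $\ell_{i^*}=1$, the loop $x_{i^*}\mapsto e^{2\pi i\theta}x_{i^*}$ realizes a simple loop around $x_0=0$ and fixes $x^v$, so $x^v$ contributes no power of $x_0$ and the coefficients are genuinely integral Laurent series in $x_0$. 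This is exactly the step the paper isolates (``It follows from Proposition 9.2(b) that $v_i=0$ for some $i$, so the factor $x^v$ contributes no powers of $x_0$''), and it is the one place in the ``if'' direction where the singleton hypothesis is used beyond producing the top $\log$ power.
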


Before proving the ``if'' direction of Theorem 9.1, we examine when $E'_\beta$ is a singleton.
\begin{proposition}
The set $E'_\beta$ is a singleton if and only if the following two conditions are satisfied: \\
{\bf (a)} $\beta\in\sum_{i=1}^k ({\mathbb Z}_{\geq 0}){\bf a}_i + \sum_{j=k+1}^n{\mathbb C}{\bf a}_j$, \\
{\bf (b)} $\ell_i=1$ for $i=1,\dots,k$.
\end{proposition}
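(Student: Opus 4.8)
The plan is to combine the counting identity (1.4), $\sum_{v\in E'_\beta} m_v = \sum_{i=1}^k \ell_i$, with the trivial bound $m_v = \lvert M_v\rvert \le k$ that comes from $M_v\subseteq\{1,\dots,k\}$. Together with the inequality $\sum_{i=1}^k \ell_i\ge k$ (each $\ell_i$ being a positive integer), these facts pin down both directions of the equivalence.

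For the forward implication, suppose $E'_\beta=\{v\}$ is a singleton. Then (1.4) gives $m_v=\sum_{i=1}^k\ell_i$, and the chain $\sum_{i=1}^k\ell_i=m_v\le k\le\sum_{i=1}^k\ell_i$ forces equality throughout. The equality $\sum_{i=1}^k\ell_i=k$ with all $\ell_i\ge 1$ yields $\ell_i=1$ for $i=1,\dots,k$, which is (b). The equality $m_v=k$ says $M_v=\{1,\dots,k\}$, i.e.\ $v_i\in{\mathbb Z}_{\geq 0}$ for $i=1,\dots,k$; since $v\in E_\beta$ satisfies $\sum_{\mu=1}^n v_\mu{\bf a}_\mu=\beta$, writing $\beta=\sum_{i=1}^k v_i{\bf a}_i+\sum_{j=k+1}^n v_j{\bf a}_j$ exhibits (a).

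For the converse, assume (a) and (b). By (b) the index $b$ ranges only over $\{0\}$, so $E_\beta=\{v^{(i,0)}\mid 1\le i\le k\}$, while the full solution set of $\sum_\mu v_\mu{\bf a}_\mu=\beta$ is the affine line $\{w+t\ell\mid t\in{\mathbb C}\}$ (the fiber is a coset of the rank-one kernel ${\mathbb C}\ell$). Condition (a) lets me choose the base point $w$ so that its first $k$ coordinates $w_1,\dots,w_k$ are nonnegative integers. Since $\ell_i=1$, the $i$-th coordinate of $w+t\ell$ is $w_i+t$ for $i\le k$, so a point lies in $E_\beta$ (some such coordinate equal to $0$) exactly when $t=-w_i\in{\mathbb Z}_{\leq 0}$; hence $E_\beta=\{w-w_i\ell\mid 1\le i\le k\}$. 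The point $w-w_i\ell$ lies in $E'_\beta$ iff its first $k$ coordinates $w_{i'}-w_i$ are all $\ge 0$, i.e.\ iff $w_i=\min_{i'}w_{i'}=:m$. Every minimizing index produces the \emph{same} vector $w-m\ell$, so $E'_\beta=\{w-m\ell\}$ is a singleton.

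The one step demanding care is the collapse at the end of the converse: several indices $i$ may attain the minimum $m$, but they all yield the single element $w-m\ell$, so there is no hidden multiplicity to inflate $E'_\beta$. The hypothesis $\ell_i=1$ is exactly what guarantees that the relevant translates are indexed by integers and that the first $k$ coordinates remain integral throughout; without (b) a coordinate $w_i+t$ could land in $\{0,\dots,\ell_i-1\}$ at noninteger $t$, and the clean enumeration of $E_\beta$---and hence the whole argument---would break down.
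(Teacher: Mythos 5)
Your proof is correct, and it reaches the equivalence by a somewhat different route than the paper. For the forward direction the paper argues structurally: if $E'_\beta=\{v\}$ then $v=\tilde{v}^{(i,b)}$ for every pair $(i,b)$, which forces $v_i\in{\mathbb Z}_{\geq 0}$ for all $i\leq k$ (giving (a)), and the observation that $\tilde{v}^{(i,b)}\neq\tilde{v}^{(i,b')}$ for $b\neq b'$ (their $i$-th coordinates differ modulo $\ell_i$) forces each $\ell_i=1$ (giving (b)). You instead squeeze everything out of the counting identity (1.4) together with the trivial bound $m_v\leq k$ and $\sum_i\ell_i\geq k$; the chain of inequalities collapsing to equalities is a clean and arguably slicker way to get both (a) and (b) at once. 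For the converse the roles are reversed: the paper builds a single $v'=v+z_0\ell\in E'_\beta$ with $m_{v'}=k$ and then invokes (1.4) (using (b) to identify $k$ with $\sum_i\ell_i$) to conclude there is no room for a second element, whereas you enumerate $E_\beta$ directly as the points $w-w_i\ell$ on the affine line of solutions and check by hand that exactly the minimizing indices land in $E'_\beta$, all yielding the same vector. Your converse has the minor advantage of not needing (1.4) at all, at the cost of redoing by hand some of the bookkeeping that Lemma 3.1 and the proof of (1.4) already encapsulate. Both arguments are complete; your observation that several indices may attain the minimum but produce the same element $w-m\ell$ is exactly the point that needs care, and you address it explicitly.
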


{\bf Remark.}  Note that condition (a) is equivalent to assuming
\[ \beta\in\sum_{i=1}^k {\mathbb Z}{\bf a}_i + \sum_{j=k+1}^n{\mathbb C}{\bf a}_j \]
because we can add multiples of $\sum_{i=1}^k \ell_i{\bf a}_i -\sum_{j=k+1}^n \ell_j{\bf a}_j$ to such an expression for $\beta$.  

\begin{proof}[Proof of Proposition $9.2$]
Suppose that $E'_\beta = \{v\}$.  It follows from Lemma~3.1 that $v=\tilde{v}^{(i,b)}$ for all $i\in\{1,\dots,k\}$ and $b\in\{0,1,\dots,\ell_i-1\}$.  We thus have $v_i\in{\mathbb Z}_{\geq 0}$ for $i=1,\dots,k$, which proves part (a).  If $b\neq b'$, then 
$\tilde{v}^{(i,b)}\neq \tilde{v}^{(i,b')}$ because their $i$-th coordinates are not congruent modulo $\ell_i$.  So if $E'_\beta$ is a singleton, then the sets $\{0,1,\dots,\ell_i-1\}$ must also be singletons, i.~e., the assertion of part (b) must hold.

Conversely, suppose that (a) and (b) hold.  By part (a) we have
\[ \beta = \sum_{i=1}^k v_i{\bf a}_i + \sum_{j=k+1}^n v_j{\bf a}_j \]
with $v_i\in{\mathbb Z}_{\geq 0}$ for $i=1,\dots,k$ and $v_j\in{\mathbb C}$ for $j=k+1,\dots,n$.
Put $v=(v_1,\dots,v_n)$. There exists a unique $z_0\in{\mathbb Z}$ such that $v':=v+z_0\ell\in E'_\beta$, namely, choose $z_0$ to be the smallest integer such that $v_i+z_0\ell_i\in{\mathbb Z}_{\geq 0}$ for $i=1\dots,k$.  Since $v_i+z_0\ell_i\in{\mathbb Z}_{\geq 0}$ for $i=1,\dots,k$ we have $m_{v'} = k$.  By part (b) we have $\sum_{i=1}^k \ell_i = k$, so Equation (1.4) implies that $v'$ is the only element of $E'_\beta$.
\end{proof}

\begin{proof}[Proof of Theorem $9.1$]
We now suppose that $E'_\beta$ is a singleton, say, $E'_\beta = \{v\}$, and that $\beta$ is nonresonant.  By Theorem~7.2, Equation~(6.3) gives us a full set of solutions in a Nilsson ring for the parameter $\beta+u$ involving powers $\log^\mu x_0$ for $\mu=0,1,\dots,k-1$.  We focus attention on the solutions for the parameter $\beta$ itself and check that the coefficients of powers of $\log x_0$ contain only integral powers of $x_0$. These coefficients are given by taking $u={\bf 0}$ and $\ell^{(u)}={\bf 0}$ in (6.2):
\begin{multline}
\Phi^Q_{v,\beta}(x) = \\ 
x^{v} \sum_{z\in{\mathbb Z}_{v,\bar{Q}^{\rm c}}({\bf 0},{\bf 0})} \bigg(\prod_{i=1}^k M_{\ell^{(u)}_i+z\ell_i,\rho_Q(i)}(v_i)\prod_{j=k+1}^nM_{\ell^{(u)}_j-z\ell_j,\rho_Q(j)}(v_j)\bigg)x_0^z.
\end{multline}
It follows from Proposition~9.2(b) that $v_i=0$ for some $i\in\{1,\dots,k\}$, so the factor $x^v$ contributes no powers of $x_0$.  Thus this series contains only integral powers of $x_0$.  
\end{proof}

We also determine when the coefficients $\Phi^Q_{v,\beta}(x)$ contain only nonnegative powers of $x_0$.  By Equation (9.3),
we need to determine when the sets ${\mathbb Z}_{v,\bar{Q}^{\rm c}}({\bf 0},{\bf 0})$ contain no negative integers
for all sequences $Q$ of length less than or equal to $k-1$.  Recall from (6.1) that 
\[ {\mathbb Z}_{v,\bar{Q}^{\rm c}}({\bf 0},{\bf 0}) = \{z\in{\mathbb Z}\mid \text{$\bar{Q}^{\rm c}$-nsupp$(v+z\ell) = \bar{Q}^{\rm c}$-nsupp$(v)$}\}. \]
\begin{lemma}
We have ${\mathbb Z}_{v,\bar{Q}^{\rm c}}({\bf 0},{\bf 0})\subseteq{\mathbb Z}_{\geq 0}$ for all sequences $Q$ of length less than or equal to $k-1$ if and only if $v_i=0$ for $i=1,\dots,k$.
\end{lemma}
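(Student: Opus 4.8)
The plan is to reduce the statement to an explicit description of the sets ${\mathbb Z}_{v,\bar{Q}^{\rm c}}({\bf 0},{\bf 0})$ and then read off both implications. First I would record the structure of $v$. Since $(1.3)$ holds, $\beta$ is nonresonant, and $E'_\beta=\{v\}$ is a singleton, Proposition~9.2(b) gives $\ell_i=1$ for $i=1,\dots,k$, so $\ell=(1,\dots,1,-\ell_{k+1},\dots,-\ell_n)$. Moreover $(1.4)$ forces $m_v=k$, hence $v_i\in{\mathbb Z}_{\geq 0}$ for $i=1,\dots,k$, while Proposition~7.1 gives $v_j\notin{\mathbb Z}$ for $j=k+1,\dots,n$. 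In particular $v$ has empty negative support, and for every $z\in{\mathbb Z}$ the coordinates of $v+z\ell$ indexed by $j>k$ remain non-integral, so they never contribute to any negative support.

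Second, the key computation is to describe $\bar{Q}^{\rm c}$-nsupp$(v+z\ell)$. Because $\ell_i=1$, the $i$-th coordinate of $v+z\ell$ equals $v_i+z$ for $i\leq k$, which lies in ${\mathbb Z}_{<0}$ precisely when $z<-v_i$; combined with the previous paragraph this gives
\[ \bar{Q}^{\rm c}\text{-nsupp}(v+z\ell)=\{i\in\bar{Q}^{\rm c}\cap\{1,\dots,k\}\mid v_i+z<0\}. \]
Since $\bar{Q}^{\rm c}$-nsupp$(v)=\emptyset$, the defining condition of ${\mathbb Z}_{v,\bar{Q}^{\rm c}}({\bf 0},{\bf 0})$ becomes $v_i+z\geq 0$ for all $i\in S_Q:=\bar{Q}^{\rm c}\cap\{1,\dots,k\}$. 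For $Q$ of length $\leq k-1$ one has $|\bar{Q}|\leq k-1$, so $S_Q\neq\emptyset$, and I obtain
\[ {\mathbb Z}_{v,\bar{Q}^{\rm c}}({\bf 0},{\bf 0})=\{z\in{\mathbb Z}\mid z\geq\max\{-v_i\mid i\in S_Q\}\}. \]
Hence ${\mathbb Z}_{v,\bar{Q}^{\rm c}}({\bf 0},{\bf 0})\subseteq{\mathbb Z}_{\geq 0}$ if and only if $\max\{-v_i\mid i\in S_Q\}\geq 0$, i.e.\ if and only if $v_i=0$ for some $i\in S_Q$ (recall $v_i\in{\mathbb Z}_{\geq 0}$).

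Finally I would deduce both directions from this criterion. If $v_i=0$ for all $i=1,\dots,k$, then every nonempty $S_Q$ contains an index with $v_i=0$, so the containment holds for all $Q$ of length $\leq k-1$. For the converse I argue contrapositively: if $v_{i_0}>0$ for some $i_0\leq k$, set $Z=\{i\leq k\mid v_i=0\}$, so that $|Z|\leq k-1$ since $i_0\notin Z$. Taking $Q$ to be any sequence with $\bar{Q}=Z$ (of length $|Z|\leq k-1$) yields $S_Q=\{1,\dots,k\}\setminus Z$, every element of which satisfies $v_i>0$, whence $\max\{-v_i\mid i\in S_Q\}<0$ and ${\mathbb Z}_{v,\bar{Q}^{\rm c}}({\bf 0},{\bf 0})$ contains negative integers. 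The main obstacle is the bookkeeping of the first two paragraphs---pinning down $S_Q$ and verifying it is nonempty precisely when $Q$ has length at most $k-1$, so that the length budget lets $\bar{Q}$ cover exactly the zero set $Z$; once the explicit formula for ${\mathbb Z}_{v,\bar{Q}^{\rm c}}({\bf 0},{\bf 0})$ is established, both implications are immediate.
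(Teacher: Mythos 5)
Your proof is correct and follows essentially the same route as the paper's: both rely on $\ell_i=1$, the non-integrality of $v_j$ for $j>k$ (Proposition~7.1), and, for the converse, the choice of $Q$ with $\bar{Q}$ equal to the set of indices where $v_i=0$ (the paper's $Q=(k'+1,\dots,k)$ after reordering), the paper exhibiting $-1$ in the set where you exhibit $\max\{-v_i\mid i\in S_Q\}<0$. Your intermediate step of writing ${\mathbb Z}_{v,\bar{Q}^{\rm c}}({\bf 0},{\bf 0})$ explicitly as $\{z\in{\mathbb Z}\mid z\geq\max\{-v_i\mid i\in S_Q\}\}$ is just a systematic repackaging of the same verification.
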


\begin{proof}
Suppose that $v_i=0$ for $i=1,\dots,k$.  If $Q$ is any sequence of length less than or equal to $k-1$, then $i_0\in\bar{Q}^{\rm c}$ for some $i_0$, $1\leq i_0\leq k$.  Since $v_{i_0}=0$, we have $i_0\not\in\bar{Q}^{\rm c}$-nsupp$(v)$.  But for any $z\in{\mathbb Z}_{<0}$ we have $v_{i_0}+z\ell_{i_0} = z\in{\mathbb Z}_{<0}$, so $i_0\in\bar{Q}^{\rm c}$-nsupp$(v+z\ell)$.  Thus $z\not\in 
{\mathbb Z}_{v,\bar{Q}^{\rm c}}({\bf 0},{\bf 0})$.

Suppose that $v_i\neq 0$ for some $i\in\{1,\dots,k\}$.  To fix ideas, suppose that $v_i\neq 0$ for $i=1,\dots,k'$ and $v_i=0$ for $i=k'+1,\dots,k$.  Let $Q$ be the sequence $(k'+1,\dots,k)$, a sequence of length less than or equal to $k-1$.  We have $1,\dots,k'\in\bar{Q}^{\rm c}$.  Since $v\in E'_\beta$, we also know that $v_i$ is not a negative integer for $i=1,\dots,k$.  This implies that either $v_i$ is a positive integer or $v_i$ is not an integer for $i=1,\dots,k'$.  Furthermore, by Proposition~7.1, $v_j\not\in{\mathbb Z}$ for $j=k+1,\dots,n$.  It follows that 
\[ \bar{Q}^{\rm c}\text{-nsupp}(v-\ell) = \bar{Q}^{\rm c}\text{-nsupp}(v), \]
hence $-1\in {\mathbb Z}_{v,\bar{Q}^{\rm c}}({\bf 0},{\bf 0})$.
\end{proof}

From Theorem 9.1, Proposition 9.2, and Lemma 9.4 we get the following result.
\begin{corollary}
Suppose that $(1.3)$ holds and that $\beta$ is nonresonant.  The $A$-hypergeometric system with parameter $\beta$ has maximal unipotent monodromy and the coefficients of powers of $\log x_0$ contain only nonnegative powers of $x_0$ if and only if the following two conditions are satisfied: \\
{\bf (a)} $\beta\in\sum_{j=k+1}^n {\mathbb C}{\bf a}_j$, \\
{\bf (b)} $\ell_i=1$ for $i=1,\dots,k$.
\end{corollary}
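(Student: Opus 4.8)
The plan is to assemble the corollary from the three preceding results, so the work is almost entirely one of organizing implications rather than producing new estimates. By Theorem~9.1 the system has maximal unipotent monodromy if and only if $E'_\beta$ is a singleton, say $E'_\beta=\{v\}$. Assuming this, the coefficients of the powers of $\log x_0$ in the solutions for the parameter $\beta$ are built from the series $\Phi^Q_{v,\beta}(x)$ with $\lvert Q\rvert\le k-1$, so by Equation~(9.3) and the discussion preceding Lemma~9.4 the requirement that these coefficients contain only nonnegative powers of $x_0$ amounts to requiring each such $\Phi^Q_{v,\beta}(x)$ to contain only nonnegative powers of $x_0$; by Lemma~9.4 this holds precisely when $v_i=0$ for $i=1,\dots,k$. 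Hence the combined hypothesis of the corollary is equivalent to the single statement that $E'_\beta=\{v\}$ is a singleton whose unique element satisfies $v_i=0$ for $i=1,\dots,k$. It then remains to translate this condition on $v$ into the conditions (a) and (b) on $\beta$.

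For the forward implication I would argue as follows. Suppose $E'_\beta=\{v\}$ and $v_i=0$ for $i=1,\dots,k$. Since $E'_\beta$ is a singleton, Proposition~9.2 yields condition (b), namely $\ell_i=1$ for all $i$. Writing $\beta=\sum_{\mu=1}^n v_\mu{\bf a}_\mu$ and using $v_i=0$ for $i\le k$ gives $\beta=\sum_{j=k+1}^n v_j{\bf a}_j\in\sum_{j=k+1}^n{\mathbb C}{\bf a}_j$, which is condition (a).

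For the converse I would construct the exponent directly rather than route through Proposition~9.2's shift argument. Assuming (a) and (b), condition (a) lets me write $\beta=\sum_{j=k+1}^n v_j{\bf a}_j$; I set $v=(0,\dots,0,v_{k+1},\dots,v_n)$, so $v_i=0$ for $i=1,\dots,k$. Because $\ell_i=1$ by (b), each $v_i=0$ lies in $\{0,1,\dots,\ell_i-1\}$ and is not a negative integer, so $v\in E'_\beta$ with $M_v=\{1,\dots,k\}$ and $m_v=k$. Condition (b) also gives $\sum_{i=1}^k\ell_i=k$, so Equation~(1.4) forces $E'_\beta=\{v\}$. Thus $E'_\beta$ is a singleton whose unique element has $v_i=0$ for all $i\le k$, which by the first paragraph is exactly the combined hypothesis of the corollary.

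I do not expect a genuine obstacle here, but the one point deserving care is that the corollary's condition (a) is strictly stronger than condition (a) of Proposition~9.2: the latter only places $\beta$ in $\sum_{i=1}^k({\mathbb Z}_{\ge0}){\bf a}_i+\sum_{j=k+1}^n{\mathbb C}{\bf a}_j$, whereas the corollary removes the $\sum_i({\mathbb Z}_{\ge0}){\bf a}_i$ contribution entirely. It is precisely this strengthening that forces the unique element of $E'_\beta$ to satisfy $v_i=0$ rather than merely $v_i\in{\mathbb Z}_{\ge0}$, and that is the hypothesis required to invoke Lemma~9.4. Condition (b) enters through Equation~(1.4) to guarantee that $E'_\beta$ is in fact a singleton, since $m_v=k$ can equal $\sum_{i=1}^k\ell_i$ only when every $\ell_i=1$.
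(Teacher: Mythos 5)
Your proof is correct and follows essentially the same route as the paper, which derives the corollary directly from Theorem~9.1, Proposition~9.2, and Lemma~9.4 without further argument. Your explicit verification that (a) and (b) force $E'_\beta=\{(0,\dots,0,v_{k+1},\dots,v_n)\}$ via Equation~(1.4) is just a slightly more direct packaging of the converse direction of Proposition~9.2 and adds nothing essentially new.
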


Note that when conditions (a) and (b) are satisfied we have $E'_\beta=\{v\}$ with $v_i=0$ for $i=1,\dots,k$.  This implies that $E_\beta = \{v\}$ also.

Consider the classical one-variable hypergeometric equation for the series
\[ {}_{d+1-k}F_{k-1}(\alpha_1,\dots,\alpha_{d+1-k};\gamma_1,\dots,\gamma_{k-1};x) \]
described in Section~1, where the corresponding value of $\beta$ is
\[ \beta = (-\alpha_1,\dots,-\alpha_{d+1-k},\gamma_1-1,\dots,\gamma_{k-1}-1). \]
As noted in Section 1, $x_0=0$ is a regular singularity exactly when $k\geq d+1-k$.  We also noted there that $\ell_i = 1$ for $i=1,\dots,k$, so condition~(b) of Proposition~9.2 is satisfied.  Furthermore, using the set $A$ from that example, we have
\[ \beta = \sum_{i=2}^k (\gamma_{k-i+1}-1){\bf a}_i + \sum_{j=k+1}^{d+1} (-\alpha_{d+2-j}){\bf a}_j. \]
Condition (a) of Proposition 9.2 is satisfied if and only if $\gamma_i\in{\mathbb Z}_{\geq 0}$ for $i=1,\dots,k-1$, which is the condition for this differential equation to have maximal unipotent monodromy.  And by condition (a) of Corollary 9.5 the coefficients of powers of $\log x$ will be holomorphic at the origin if and only if $\gamma_i=1$ for $i=1,\dots,k-1$.  

One can also create examples by choosing $n-1$ independent vectors ${\bf a}_1,\dots,{\bf a}_{n-1}\in{\mathbb Z}^d$ and defining ${\bf a}_n=-\sum_{\mu=1}^{n-1} {\bf a}_\mu$.  Then $\sum_{\mu=1}^n{\bf a}_\mu = {\bf 0}$, so condition (b) of Proposition~9.2 is satisfied.  Furthermore, in this case the origin is an interior point of $\Delta(A)$ so all $\beta\in V_{\mathbb C}$ are nonresonant.  All $\beta\in{\mathbb Z}A$ satisfy condition (a) of Proposition~9.2.   Only $\beta = {\bf 0}$ satisfies condition (a) of Corollary 9.5. 

\begin{comment}
We make one further observation.  Consider the Ehrhart series 
\[ P(t):= \sum_{s=0}^\infty {\rm card}(s\Delta(A)\cap{\mathbb Z}A)t^s, \]
where $s\Delta(A)$ denotes the dilation of $\Delta(A)$ by the factor $s$.  
\begin{proposition}
Suppose that $(1.3)$ holds.  Then $\ell_i=1$ for $i=1,\dots,k$ (i.~e., condition (b) of Proposition $1.3$ holds) if and only if
\begin{equation}
P(t) = \frac{1 + t + \cdots + t^{k-1}}{(1-t)^n}.
\end{equation}
\end{proposition}

\begin{proof}
If (9.5) holds, then ${\rm vol}(A) = k$ so (1.3) implies $\ell_i=1$ for $i=1,\dots,k$.  Conversely, if $\ell_i=1$ for $i=1,\dots,k$, then Lemma~2.10 implies that ${\rm vol}(\Delta_i) = 1$ for $i=1,\dots,k$.  This implies in turn that the set $A\setminus\{{\bf a}_i\}$ is a basis for the lattice ${\mathbb Z}A$, so
\[ \sum_{s=0}^\infty {\rm card}(s\Delta_i\cap{\mathbb Z}A)t^s = \frac{1}{(1-t)^n}. \]
It follows that for each $I\subseteq\{1,\dots,k\}$ the set $A\setminus\{{\bf a}_i\}_{i\in I}$ is a basis for the abelian group ${\rm span}_{\mathbb R}(A\setminus\{{\bf a}_i\}_{i\in I})\cap{\mathbb Z}A$, so
\[ \sum_{s=0}^\infty {\rm card}(s\Delta_I\cap{\mathbb Z}A)t^s = \frac{1}{(1-t)^{n+1-|I|}}. \]
Equation (9.5) then follows from an inclusion-exclusion argument using Lemmas~2.2 and~2.5.
\end{proof}
\end{comment}

\end{document}